\newcommand{\pd}{\partial} 
\newcommand{\sbxt}{{\scriptstyle\boxtimes}}
\newcommand{\bC}{\mathbb{C}} 
\newcommand{\bM}{\mathbb{M}} 
\newcommand{\bN}{\mathbb{N}} 
\newcommand{\bP}{\mathbb{P}}
\newcommand{\bQ}{\mathbb{Q}}
\newcommand{\bR}{\mathbb{R}}
\newcommand{\bZ}{\mathbb{Z}}
\newcommand{\unit}{\mathbf{1}} 
\newcommand{\cC}{\mathcal{C}}
\newcommand{\cH}{\mathcal{H}}
\newcommand{\cO}{\mathcal{O}}
\newcommand{\cU}{\mathcal{U}}
\newcommand{\cW}{\mathcal{W}}
\newcommand{\cY}{\mathcal{Y}}
\newcommand{\fH}{\mathfrak{H}}
\newcommand{\fX}{\mathfrak{X}}
\newcommand{\Ann}{\operatorname{Ann}}
\newcommand{\Aut}{\operatorname{Aut}}
\newcommand{\End}{\operatorname{End}}
\newcommand{\Hom}{\operatorname{Hom}}
\newcommand{\Irr}{\operatorname{Irr}}
\newcommand{\Ker}{\operatorname{Ker}}
\newcommand{\Tr}{\operatorname{Tr}}
\newcommand{\wt}{\operatorname{wt}}
\newtheorem{thm}{Theorem}[section]
\newtheorem{prop}[thm]{Proposition}
\newtheorem{cor}[thm]{Corollary}
\newtheorem{lem}[thm]{Lemma}
\newtheorem*{uthm}{Theorem}
\newtheorem*{conj}{Conjecture}
\theoremstyle{definition}
\newtheorem{defn}[thm]{Definition}
\theoremstyle{remark}
\newtheorem{rem}[thm]{Remark}
\numberwithin{equation}{section}
\begin{document}
\title{Regularity of fixed-point vertex operator subalgebras}
\author{\begin{tabular}{c}
Scott Carnahan\footnote{Supported by the Program to Disseminate Tenure Tracking System, MEXT, Japan} \\
and \\
Masahiko Miyamoto\footnote{Supported
by a Grant-in-Aid for Scientific Research, No. 26610002, The Ministry of Education,
Science and Culture, Japan}\vspace{-0mm}\\
Institute of Mathematics, University of Tsukuba, \vspace{-2mm}\\
Tsukuba 305 Japan \end{tabular}}
\date{}
\maketitle

\begin{abstract}
We show that if $T$ is a simple non-negatively graded regular vertex operator algebra with a nonsingular invariant bilinear form and $\sigma$ is a finite order automorphism of $T$, then the fixed-point vertex operator subalgebra $T^\sigma$ is also regular.  This yields regularity for fixed point vertex operator subalgebras under the action of any finite solvable group.  As an application, we obtain an $SL_2(\bZ)$-compatibility between twisted twining characters for commuting finite order automorphisms of holomorphic vertex operator algebras.  This resolves one of the principal claims in the Generalized Moonshine conjecture.
\end{abstract}

\tableofcontents

\section{Introduction}

Let $T$ be a vertex operator algebra (abbreviated as VOA).  If $T$ satisfies some property $P$, it is natural to ask if, for any finite order automorphism $\sigma$, the fixed-point vertex operator subalgebra $T^\sigma$ also satisfies $P$.  We call this question the cyclic orbifold problem for $P$, since the chiral algebras of orbifold CFT models are given by fixed-point vertex operator subalgebras.  In this paper, we are mostly concerned with a property called ``regularity'' that combines finiteness with complete reducibility.

For our main theorem, we show that if $T$ is simple, regular, non-negatively graded, and equipped with a nonsingular invariant bilinear form, then so is $T^\sigma$.  That is, we resolve the cyclic orbifold problem for the ``simple, regular, non-negatively graded, with nonsingular invariant form'' property.  While this property appears to be rather long and restrictive, many fundamentally important VOAs, such as unitary minimal models, rational WZW models, lattice VOAs, the Monster VOA $V^\natural$ appearing in moonshine, and finite tensor products of them have this property.  Furthermore, there are many substantial results, such as Zhu's theorem on modular invariance of trace functions \cite{Zh} and the Verlinde Conjecture \cite{HV}, that are consequences of this property.  However, regularity can be quite difficult to prove for a particular VOA without substantial control of its internal structure.  Thus, a resolution of the cyclic orbifold problem allows us to apply powerful tools where they were once useless.

The cyclic orbifold problems for the non-negatively graded property and existence of a nonsingular invariant form are quite trivial, and the cyclic orbifold problem for simplicity is resolved by Theorem 3 of \cite{DM}.  It remains to show that regularity is preserved under our conditions, that is, if any weak $T$-module is a direct sum of irreducible ordinary $T$-modules, then any weak $T^\sigma$-module is a direct sum of irreducible ordinary $T^\sigma$-modules.

We shall reduce this question further: under the non-negative grading hypothesis, regularity is equivalent to the combination of a finiteness property and a weaker complete reducibility assertion.  The finiteness property is a condition we call $C_2^0$-cofiniteness, and it is a minor variant of the $C_2$-cofiniteness property introduced in \cite{Zh} as a technical condition to prove a modular invariance property of the space spanned by trace functions on modules.  The $C_2$ condition is now understood as one of the most important finiteness conditions for a VOA, and we claim the same is true for $C_2^0$-cofiniteness.  Recently, the cyclic orbifold problem for the $C_2^0$ cofiniteness property was resolved \cite{M3}, so our primary goal is to prove the other condition, namely that irreducible $T^\sigma$-modules are projective in the category of finitely generated logarithmic $T^\sigma$-modules.

We prove the projectivity of irreducible modules by appealing to their rigidity, and the proof of rigidity is the most intricate part of the paper, drawing on substantial manipulation of genus one data.

One may ask if we can weaken our hypotheses further.  It is conceivable that one could weaken the non-negatively graded condition, but it would require even more examination of the literature than we have done in section \ref{sec:literature}.  We use the grading assumption for $T^\sigma$ in the modular invariance result of \cite{M04}, and by the remark the end of the introduction in that paper, it appears that we may need to assume the stronger condition of $C_{2+s}$-cofiniteness when $T^\sigma$ has lowest $L(0)$-eigenvalue $-s \leq 0$.  We also use the assumption $s_{T,T}\neq 0$, which follows from the Verlinde formula, whose proof in \cite{HV} assumes $T$ is of ``CFT type''.  We have been able to replace this assumption with the strictly weaker ``non-negatively graded'' assumption, but going further appears to require substantially more work.  As far as we know, it is not so easy to find interesting examples that would justify the effort.

At the end of this paper, we will describe an application of our main theorem to the theory of holomorphic orbifolds and the Generalized Moonshine conjecture.  Further applications have already appeared in the literature.  For example, \cite{vEMS} use our main result to construct new abelian intertwining algebras, and place the theory of cyclic orbifolds on a very sound footing.  Their results have allowed the classification of holomorphic vertex operator algebras of central charge 24 to advance quite rapidly, and have also led to the full solution of the Generalized Moonshine conjecture in \cite{GM4}.

This paper has the following structure: In \S2, we introduce some definitions and notation.  In \S3, we prove some auxiliary results that connect our methods with the existing literature.  In \S4, we prove three useful results: First, $T$ decomposes as a direct sum of simple currents for $T^\sigma$.  Second, any irreducible $T^\sigma$-module is a direct summand of some irreducible twisted $T$-module.  Third, $T^{\sigma}$ is projective as a $T^{\sigma}$-module.  This section is ``purely genus zero'' in the sense that the contribution of modular invariance results is negligible.  In \S5, we introduce the Moore-Seiberg-Huang argument using the machinery of two-point genus-one functions, prove the rigidity of every simple $T^{\sigma}$-module, and conclude with the Main Theorem.  In \S6, we apply our main theorem to produce an $SL_2(\bZ)$-compatibility for twisted twining characters in the theory of holomorphic orbifolds, and in particular, resolve the corresponding $SL_2(\bZ)$-compatibility claim in the Generalized Moonshine Conjecture.

\subsection{Main results}

\begin{uthm} (Corollary \ref{cor:irreducible-induced-modules})
Let $T$ be a simple non-negatively graded regular vertex operator algebra with a nonsingular invariant bilinear form and let $\sigma$ be a finite order automorphism of $T$.  Then any irreducible $T^\sigma$-module is a direct summand of some irreducible twisted $T$-module, where the twisting is by some power of $\sigma$.
\end{uthm}

The following is our main theorem in its strong form.  We initially prove the case of finite cyclic groups, but the case of finite solvable groups follows immediately.

\begin{uthm} (Corollary \ref{cor:solvable-fixed-points})
Let $T$ be a simple non-negatively graded regular vertex operator algebra with a nonsingular invariant bilinear form and let $G$ be a finite solvable group of automorphisms of $T$.  Then the fixed-point vertex operator subalgebra $T^G$ is simple, non-negatively graded, regular, and admits a non-singular invariant bilinear form (i.e., it is self-dual as a $T^G$-module).
\end{uthm}

The following is one of the main assertions in Norton's Generalized Moonshine conjecture, once we set $V$ to be the Moonshine Module $V^\natural$.

\begin{uthm} (Theorem \ref{thm:sl2z-rule})
Let $V$ be a holomorphic $C_2$-cofinite vertex operator algebra with non-negative $L(0)$-spectrum.  Given a commuting pair $(g,h)$ of finite order automorphisms of $V$, let $Z(g,h;\tau) = \Tr(\tilde{h} q^{L_0-1}|V(g))$, where $V(g)$ is an irreducible $g$-twisted $V$-module, and $\tilde{h}$ is some lift of $h$ to a linear transformation on $V(g)$.  Then for any $\left(\begin{smallmatrix} a & b \\ c & d \end{smallmatrix} \right) \in SL_2(\bZ)$, the holomorphic function $\tau \mapsto Z(g,h,\frac{a\tau+b}{c\tau+d})$ is proportional to the function $\tau \mapsto Z(g^a h^c, g^b h^d,\tau)$.
\end{uthm}

\noindent
{\bf Acknowledgement} \\
S. C. would like to thank J. van Ekeren, Y.-Z. Huang, K. Kawasetsu, C.-H. Lam, and J. Lepowsky for helpful comments and corrections to earlier versions of this article.
M. M. would like to express special thanks to H.~Yamauchi and T.~Arakawa for giving 
him the chance to explain the details of his proofs.  
He also gives thanks to T.~Abe and A.~Matsuo for their advice. 

\section{Notation conventions}

A VOA of central charge $c \in \bC$ \cite{FLM} is a quadruple $(V, Y(-,z), \unit, \omega)$, where $V$ is a vector space with distinguished vectors $\unit, \omega \in V$, and $Y: V \otimes V \to V((z))$ is a multiplication map, with multiplication written as $(u,v) \mapsto Y(u,z)v = \sum_{n \in \bZ} u_n v z^{-n-1}$.  These data are required to satisfy the following axioms:
\begin{enumerate}
\item (unit) $Y(\unit,z)v = \unit_{-1} v = v$ and $Y(v,z)\unit \in v + zV[[z]]$ for all $v \in V$.
\item (Virasoro) The coefficients of the expansion $Y(\omega,z) = \sum_{n \in \bZ} L(n) z^{-n-2}$ induce a central charge $c$ action of the Virasoro algebra, i.e., $[L(m), L(n)] = (m-n)L(m+n) + \frac{m^3-m}{12} c \delta_{m,0} Id_V$.
\item (derivation) $\frac{d}{dz} Y(v,z) = Y(L(-1)v,z)$ for all $v \in V$.
\item (grading) The operator $L(0)$ acts semisimply with integer eigenvalues (called weights) that are bounded below and have finite multiplicity.  This endows $V$ with a grading $V = \bigoplus_{n \in \bZ} V_n$, where $V_n = \{ v \in V | L(0)v = nv\}$ is finite dimensional.  When $v \in V_n$, we write $\wt(v) = n$.
\item (Jacobi identity) If we let $z^{-1} \delta\left(\frac{x-y}{z}\right) = \sum_{m \geq 0, n \in \bZ} (-1)^m \binom{n}{m} x^{n-m} y^m z^{-n-1}$, then
\[ \begin{aligned}
z^{-1} \delta&\left(\frac{x-y}{z}\right) Y(u, x) Y(v,y) - z^{-1} \delta\left(\frac{y-x}{-z}\right) Y(v,y) Y(u, x) \\
&= y^{-1} \delta\left(\frac{x-z}{y}\right) Y(Y(u, z)v, y) 
\end{aligned} \]
\end{enumerate}
In this paper, we will only consider VOAs that are non-negatively graded, i.e., with non-negative $L(0)$-eigenvalues.

For the representation theory of $V$, we are primarily concerned with a class of modules that satisfy some nice finiteness properties without being overly restricted.  The most general class we consider is a weak $V$-module, i.e., a vector space $U$ with an action map $Y^U: V \otimes U \to U((z))$, satisfying:
\begin{enumerate}
\item (unit) $Y^U(\unit,z)u = \unit_{-1} u$ for all $u \in U$
\item (Virasoro) The coefficients of the expansion $Y^U(\omega,z) = \sum_{n \in \bZ} L(n) z^{-n-2}$ induce a central charge $c$ action of the Virasoro algebra, where $c$ is the central charge of $V$.
\item (Jacobi identity) This is the same formula as for VOAs, but with $Y^U$ in place of $Y$ (where applicable).
\item (derivation) $\frac{d}{dz} Y^U(v,z) = Y^U(L(-1)v,z)$ 
\end{enumerate}
We will use the term ``$V$-module'' to mean a finitely generated weak $V$-module that admits a decomposition into generalized eigenspaces for $L(0)$.  This would be called a ``finitely generated logarithmic $V$-module'' following \cite{M02} and a ``finitely generated generalized $V$-module'' following \cite{HLZ1}.  An admissible $V$-module is a weak $V$-module that admits a $\bZ_{\geq 0}$-grading that is compatible with the $L(0)$-grading on $V$.  Note that all $V$-modules are admissible $V$-modules, but not all admissible $V$-modules are $V$-modules in our sense.  An ordinary $V$-module is a weak $V$-module on which $L(0)$-acts semisimply, with eigenvalues that are bounded below in each coset of $\bZ$ and eigenvalue multiplicities that are finite.  Again, ordinary $V$-modules are not necessarily $V$-modules in our sense, since one may consider an infinite direct sum of modules with differing lowest conformal weight.  For example, a positive definite lattice VOA is an ordinary module for the underlying Heisenberg VOA, but not a module.

We will work with a minor variant of Zhu's $C_2$-cofiniteness condition.  For any $V$-module $W$ and any $n \geq 1$, we define $C_n^0(W)$ to be the subspace of $W$ spanned by $\{v_{-n}u\mid v\in V, w \in W, \wt(v)\geq 1\}$.  The usual $C_n(W)$ lacks the condition on the weight of $v$, so we clearly have $C_n^0(W) \subseteq C_n(W)$.  When we say that $V$ is $C_2^0$-cofinite, we mean that $\dim V/C_2^0(V)<\infty$.  When $V$ is ``CFT type'', i.e., non-negatively graded and $\Ker L(0)$ is spanned by $\unit$, the two conditions become equivalent, but otherwise our condition is \textit{a priori} slightly stronger.  That is, the following implications are straightforward:

\[ \xymatrix{ V \text{ is } C_2^0 \text{-cofinite and of CFT type} \ar@{<=>}[dd] \ar@{=>}[dr] \\ & V \text{ is } C_2^0 \text{-cofinite} \ar@{=>}[r] & V \text{ is } C_2 \text{-cofinite.} \\ V \text{ is } C_2 \text{-cofinite and of CFT type} \ar@{=>}[ur] } \]

The main advantage of $C_2^0$-cofiniteness over $C_2$-cofiniteness is that the cyclic orbifold problem is solved for $C_2^0$-cofiniteness \cite{M3} without the CFT type condition.

For any (finitely generated logarithmic) $V$-modules $A,B,C$, we will use the term ``intertwining operator of type $\binom{C}{A,B}$'' to denote a logarithmic intertwining operator as described in Definition 3.10 of \cite{HLZ2}: it is a map $A \otimes B \to C[\log z]\{z\}$ satisfying a lower truncation condition, the $L(-1)$-derivative property $\cY(L(-1)a,z)=\frac{d}{dz}\cY(a,z)$, and the Jacobi identity.  As a consequence of our finiteness conditions (see e.g., Lemma \ref{lem:tensor-products-work-well}), the series $\cY(a,z)$ has the form $\cY(a,z)=\sum_{j=0}^K\sum_{m\in \bC}a_{j,m}z^{-m-1}\log^j z$, i.e., $\cY$ can be viewed as a map $A \otimes B \to C\{z\}[\log z]$.  We will write $I\binom{C}{A,B}$ for the vector space of all intertwining operators of type $\binom{C}{A,B}$, including those with logarithmic terms.


If $V$ is non-negatively graded and $C_2^0$-cofinite, then we will see in Lemma \ref{lem:tensor-products-work-well} that the category of $V$-modules satisfies the conditions given in \cite{HP} that are sufficient for the Huang-Lepowsky-Zhang tensor product theory to function.  Given $V$-modules $A$ and $B$, there is a pair $(A\boxtimes B, \cY^{\boxtimes}_{A,B})$ given by a (finitely generated logarithmic) $V$-module $A\boxtimes B$ and a surjective (logarithmic) intertwining operator $\cY_{A,B}^{\boxtimes}$ of type $\binom{A \boxtimes B}{A,B}$ such that for any $\cY\in I\binom{C}{A,B}$, there is a homomorphism $\phi:A\boxtimes B \to C$ such that $\cY=\phi\cY_{A,B}^{\boxtimes}$.  Although the pair $(A\boxtimes B, \cY_{A,B}^{\boxtimes})$ is not uniquely determined by this property, it is uniquely determined up to isomorphism.  In order to get a true universal property, one needs an additional compatibility condition, and we will use the $P(1)$ compatibility from Huang-Lepowsky-Zhang, i.e., we set $A \boxtimes B = A \boxtimes_{P(1)} B$.  By the isomorphism given in \cite{A} between $V$-module intertwining operators and genus zero three point conformal blocks, it is equivalent to say that the pair $(A\boxtimes B, \cY_{A,B}^{\boxtimes})$ represents the functor that takes a $V$-module $U$ the the space of $V$-module conformal blocks on $\bP^1$ with marked points $0,1,\infty$ with the standard coordinates, and insertions of $A$ at $0$, $B$ at $1$, and $U'$ at $\infty$.

We fix a representative $(A\boxtimes B, \cY_{A,B}^{\boxtimes})$ of the isomorphism class for each pair $(A,B)$, by setting $A \boxtimes B = A \boxtimes_{P(1)} B$ in the notation of Huang-Lepowsky-Zhang.


Given $V$-module homomorphisms $\alpha: A\to C$ and $\beta: B\to D$, the functoriality of $P(1)$-tensor products yields an induced homomorphism from $A\boxtimes B$ to $C\boxtimes D$, which 
we denote by $\alpha \sbxt \beta$.  We write $\sigma\circ\delta$ to denote the composition of homomorphisms $\sigma$ and $\delta$. 

We follow the convention used in Huang's papers (e.g., \cite{HD}) to describe equality of $n$-point functions: When we say that an identity (of formal power series with fractional powers and logarithms) holds on a complex domain $\cO$, we implicitly mean that one chooses a set of branch cuts on $\cO$ that yield a distinguished choice of fractional powers and logarithms of the corresponding variables, and then applies the substitution at each point to get a pair of series that converge to an identity of complex numbers.  Furthermore, when we claim equality, we implicitly claim that the two series converge absolutely uniformly on compact subsets of the cut region, so that we have an equality of holomorphic functions.  To ensure consistency with results in the literature, we choose our branch cuts according to the method given by Huang: arguments of variables are restricted to lie between zero and $2\pi$, and fractional powers and logarithms are then chosen accordingly.

\section{Literature adjustment} \label{sec:literature}

This section contains some minor lemmata that allow us to strengthen our main result and render it more palatable.

\subsection{Removing ``CFT type'' assumptions}

Some of the results that we use in this paper were initially proved under the assumption that $L(0)$ has non-negative spectrum and $\Ker L(0)$ is spanned by the vacuum vector.  This property is known as ``CFT type'' in some papers and ``positive energy'' in others.  In order to eliminate the assumption on $\Ker L(0)$ from our main theorem, we need show that the assumption can be removed from some results in the literature.  The situation is the following:
\begin{enumerate}
\item In \cite{HP}, some sufficient conditions are given for a VOA $V$ to have the property that all finitely generated $V$-modules have finite length, and one of those conditions is that $V$ be $C_2$-cofinite and positive energy.  We will need to use the finite-length property to reduce the regularity question to the condition that all irreducible finitely generated $V$-modules are projective.
\item In \cite{HV}, the main results are stated under the positive energy hypothesis.  We will need to use the result that if $T$ is regular, then the $S$-matrix element $s_{TT} \neq 0$.  The positive energy assumption is not directly used in Huang's paper, but some results in \cite{HD} that use this assumption are cited.  In particular, in \cite{HD} section 3, the author assumes all $V$-modules are $C_2$-cofinite and $\bR$-graded.  In the proof of Theorem 7.2, Huang cites \cite{ABD} Proposition 5.2 to deduce all hypotheses used earlier in the paper from the combination of regularity and the positive energy assumption, but notes that one only needs $C_2$-cofiniteness of modules, complete reducibility of $\bN$-gradable weak $V$-modules, and the non-negativity of the $L(0)$-spectrum on $V$. 
\item In \cite{M3}, the statement of the main theorem, concerning the resolution of the cyclic orbifold problem for $C_2$-cofiniteness, contains the assumption that $V$ is non-negatively graded and $V_0$ is spanned by the vacuum vector.  However, the mathematical content of the proof is a solution to the cyclic orbifold problem for $C_2^0$-cofiniteness without that assumption.
\end{enumerate}

Naturally, the reader who is only concerned with vertex operator algebras of CFT type can safely ignore this section.

\begin{lem} \label{lem:finite-length}
If $V$ is a $C_2^0$-cofinite VOA with non-negative $L(0)$-spectrum, then any $V$-module has finite length.  In particular, any finitely generated $V$-module contains an irreducible submodule.
\end{lem}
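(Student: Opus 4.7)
The plan is to adapt the standard Huang--Pavlovi\'c argument for finite length, which handles $C_2$-cofinite VOAs of CFT type, to our weaker setting where CFT type is replaced by non-negative grading and $C_2$-cofiniteness is strengthened to $C_2^0$-cofiniteness. The point is that the $C_2^0$-variant (with its requirement $\wt(v) \geq 1$) is precisely calibrated to compensate for the loss of the hypothesis $\Ker L(0) = \bC\unit$.

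First, I would observe that $C_2^0$-cofiniteness implies ordinary $C_2$-cofiniteness, so Zhu's associative algebra $A(V)$ is finite-dimensional and $V$ admits only finitely many isomorphism classes of irreducible admissible modules, say $M_1, \ldots, M_k$. This is the usual consequence of $C_2$-cofiniteness and is insensitive to whether $V$ is of CFT type.

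Next I would establish the ``in particular'' clause by producing an irreducible submodule of a finitely generated $V$-module $W$. Since $W$ is a finitely generated logarithmic module, its set of $L(0)$-generalized eigenvalues is contained in finitely many cosets of $\bZ$ in $\bC$, each bounded below. Let $h$ be the overall minimum such generalized eigenvalue. The generalized eigenspace $W_{(h)}$ is finite-dimensional and carries an $A(V)$-action via zero modes of degree-zero elements of $V$, so it contains a simple $A(V)$-submodule $U$. By the Zhu correspondence applied to $U$ sitting as the bottom layer of the admissible $V$-submodule $\langle U \rangle \subseteq W$, the $V$-submodule generated by $U$ has as a quotient, or rather contains, an irreducible admissible $V$-submodule isomorphic to one of the $M_i$.

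For the finite length assertion, I would adapt the usual $C_2$-cofinite spanning argument to show that $W/C_1^0(W)$ is finite-dimensional, where $C_1^0(W)$ is defined analogously to $C_2^0(W)$ with the strict positivity $\wt(v) \geq 1$. A standard PBW-style argument then bounds the composition length by $\dim W/C_1^0(W)$, since each composition factor contributes nontrivially to this quotient and successive quotients detect successive irreducible subquotients.

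The main obstacle is verifying that every step of the standard CFT-type argument actually survives when $V_0$ is possibly strictly larger than $\bC\unit$. Spanning and generation arguments in the literature often implicitly use that $V_0$ acts as scalars on the lowest-weight space; in our setting, one must carefully replace $C_2$ and $C_1$ by the strict-weight variants $C_2^0$ and $C_1^0$ throughout, so that only elements of strictly positive weight are used as generators. Checking that this substitution is compatible with the filtration arguments is the key technical point, and it is exactly what makes the $C_2^0$ hypothesis the right replacement for $C_2$ + CFT type.
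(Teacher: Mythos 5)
Your overall strategy --- reduce finite length to a $C_1$-type cofiniteness condition plus finite-dimensionality of Zhu algebras, and observe that the $\wt(v)\geq 1$ restriction in $C_2^0$ is what substitutes for CFT type --- is the same as the paper's, which invokes Proposition 3.8 and Corollary 3.16 of \cite{HP} (a single-author paper of Huang; there is no Pavlovi\'c) to get finite length from (i) $C_1^a$-cofiniteness of $V$ in the sense of Li, where $C_1^a(V)$ is spanned by $u_{-1}v$ with $\wt(u),\wt(v)>0$ \emph{together with} $L(-1)V$, and (ii) $\dim A_0(V)<\infty$. But you have deferred rather than done the one computation that carries the content of the lemma. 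You write that ``checking that this substitution is compatible with the filtration arguments is the key technical point'' --- and it is, but the check is the proof. Concretely, the paper shows $C_2^0(V)\subseteq C_1^a(V)$: for $\wt(u)>0$ the derivative axiom gives $u_{-2}v=(L(-1)u)_{-1}v$, which lies in $C_1^a(V)$ when $\wt(v)>0$; the problematic case $\wt(v)=0$ (exactly where CFT type would have made $v$ a multiple of $\unit$) is resolved by skew-symmetry, rewriting $u_{-2}v$ as $L(-1)(v_{-1}u)-(L(-1)v)_{-1}u$ modulo $L(-1)V$, every term of which lies in $C_1^a(V)$ because $L(-1)V$ does. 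Without this step your argument does not engage the actual difficulty.

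There are two further gaps. First, your proposed length bound --- ``the composition length is bounded by $\dim W/C_1^0(W)$ since each composition factor contributes nontrivially to this quotient'' --- is not justified: $\dim(\,\cdot\,/C_1^0(\cdot))$ is not obviously superadditive along a composition series (a factor buried inside $W$ can sit entirely inside $C_1^0(W)$), and even the finiteness of $\dim W/C_1^0(W)$ for an arbitrary finitely generated $W$ is not free; in the paper it is a separate statement (Lemma \ref{lem:modules-are-c2} plus the inclusion $C_2(W)\subseteq C_1^0(W)$) proved \emph{after} this lemma and used elsewhere. The cited results of \cite{HP} are doing real work here that your sketch replaces with an assertion. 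Second, your argument for the ``in particular'' clause is backwards: generating a submodule from a simple $A(V)$-submodule of the lowest generalized weight space yields, by Zhu theory, a distinguished irreducible \emph{quotient}, not an irreducible \emph{submodule} (your own hedge ``has as a quotient, or rather contains'' signals the problem). That clause needs no separate argument at all: once finite length is established, any nonzero module has a minimal nonzero submodule, which is irreducible.
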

\begin{proof}
In \cite{HP}, the combination of Proposition 3.8 and Corollary 3.16 yields the assertion that finite length follows from the following assumptions:
\begin{enumerate}
\item $V$ is $C_1$-cofinite in the sense of Li, i.e., the subspace $C_1^a(V)$ spanned by $\{ u_{-1}v | \wt(u) > 0, \wt(v) > 0 \}$ and $\{ L(-1)v | v \in V \}$ satisfies $\dim V/C_1^a(V) < \infty$.  We will call this condition $C_1^a$-cofiniteness.
\item $A_0(V)$ is finite dimensional.
\end{enumerate}
Huang shows in Proposition 4.1 that these conditions are satisfied when $V$ is $C_2$-cofinite and positive-energy, but we wish to weaken the positive energy condition to the condition that the $L_0$-spectrum is non-negative.

For the first condition, we show that $C_2^0(V) \subseteq C_1^a(V)$.  By the derivative axiom, $(L(-1)u)_{-1} = u_{-2}$ for all $u \in V$.  Thus, $C_1^a(V)$ contains any vector of the form $u_{-2}v$, where $\wt(u) > 0$ and $\wt(v) > 0$.  By the non-negative grading assumption, it remains to consider vectors of the form $u_{-2}v$ where $\wt(u) > 0$ and $\wt(v) = 0$.  Applying skew-symmetry, we find that such a vector is equal to $L(-1)(v_{-1}u) - (L(-1)v)_{-1}u$, and each summand lies in $C_1^a(V)$.

For the second condition, finite-dimensionality of $A_N(V)$ for all $N \geq 0$ follows from non-negative grading and $C_2$-cofiniteness by Theorem 2.5 of \cite{M04}.
\end{proof}

\begin{lem} \label{lem:modules-are-c2}
Let $V$ be a $C_2$-cofinite VOA that is non-negatively graded.  Then if $W$ is any finitely generated $V$-module, then $W/C_2(W)$ is finite dimensional, i.e., $W$ is $C_2$-cofinite.
\end{lem}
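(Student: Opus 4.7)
The plan is to adapt Buhl's spanning set theorem, originally formulated for VOAs of CFT type (see, e.g., \cite{ABD}), to the non-negatively graded setting. First, using $C_2$-cofiniteness, I would select a finite-dimensional homogeneous subspace $U\subseteq V$ satisfying $V=U+C_2(V)$ and $V_0\subseteq U$. Let $\{w_1,\ldots,w_m\}$ be a finite generating set for $W$. A standard iterated Jacobi-identity argument shows that $W$ is spanned by monomials $u^1_{-n_1}\cdots u^k_{-n_k}w_j$ with $u^i\in U$ and $n_i\geq 1$.

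Next, from the derivation axiom $(L(-1)v)_m=-m\,v_{m-1}$, one obtains inductively $v_{-n}=\tfrac{1}{(n-1)!}(L(-1)^{n-2}v)_{-2}$ for every $n\geq 2$, which gives $C_n(W)\subseteq C_2(W)$ for all $n\geq 2$. Applied to the rightmost factor of each monomial, together with the commutator formula $[v_m,u_n]=\sum_{i\geq 0}\binom{m}{i}(v_iu)_{m+n-i}$ (whose correction terms produce modes of index $\leq m+n$, landing in $C_2(W)$ whenever one propagates leftward past a mode of index $\leq -2$), I would reduce, modulo $C_2(W)$, to a spanning set of the form $u^1_{-1}\cdots u^k_{-1}w_j$.

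The main obstacle is to bound both the length $k$ and the total $L(0)$-weight of these all-$(-1)$-mode monomials. Under the CFT-type hypothesis one may choose each $u^i$ of weight $\geq 1$, so a Buhl-type multiplicity argument, combined with the finite-dimensionality of $L(0)$-weight spaces of $W$, yields the bound. In our more general setting, one must additionally treat modes $v_{-1}$ for $v\in V_0$, which preserve the $L(0)$-weight and are therefore invisible to weight counting. The crucial observation is that modulo $C_2(W)$ the commutators $[v_{-1},u_{-1}]$ vanish whenever $v\in V_0$ and $u\in U$ (every term of $\sum_{i\geq 0}\binom{-1}{i}(v_iu)_{-2-i}$ lies in $C_{2+i}(W)\subseteq C_2(W)$), and the associativity relation $v_{-1}u_{-1}\equiv (v_{-1}u)_{-1}\pmod{C_2(W)}$ holds (the difference expands into modes of index $\leq -2$). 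Hence $V_0$ acts on $W/C_2(W)$ through the finite-dimensional associative algebra $(V_0,\cdot_{-1})$, so the $V_0$-orbit of any vector is cyclic of dimension at most $\dim V_0$. Combined with the Buhl-type bound on the positive-weight creation sequences at each fixed $L(0)$-weight, this yields finite-dimensionality of $W/C_2(W)$.
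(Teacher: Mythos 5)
Your overall strategy (spanning set, reduction modulo $C_2(W)$, special treatment of weight-zero modes) is in the spirit of \cite{ABD}, but it has a genuine gap at its first and load-bearing step. The claim that a ``standard iterated Jacobi-identity argument'' spans $W$ by monomials $u^1_{-n_1}\cdots u^k_{-n_k}w_j$ with all $n_i\geq 1$ is not standard and is false for a general finitely generated weak module: the available spanning-set theorems (Buhl's, or Lemma~2.4 of \cite{M04}) necessarily include modes $u_m$ with $m\geq 0$, and these cannot be normal-ordered away, since $u_m w_j$ for $m\geq 0$ need not lie in the span of negative-mode monomials applied to the $w_j$, and such modes can \emph{raise} the $\bN$-degree (e.g.\ $u_0$ for $\wt(u)=2$), so no absorption into an enlarged finite generating set terminates. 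Everything downstream rests on this unproved reduction. A secondary soft spot is the final bound: after collapsing to all-$(-1)$-mode monomials you must still explain why only finitely many $L(0)$-weights survive modulo $C_2(W)$, and the appeal to ``a Buhl-type bound on the positive-weight creation sequences'' is circular, because the multiplicity restrictions in those theorems are attached to the original mode indices, which you have already discarded by normal ordering. Ironically, the relation $v_{-1}u_{-1}\equiv(v_{-1}u)_{-1}\pmod{C_2(W)}$ that you prove only for $v\in V_0$ holds for all $v\in V$ by the same iterate-formula computation, and together with $(C_2(V))_{-1}W\subseteq C_2(W)$ it telescopes any all-$(-1)$ monomial to a single $\tilde u_{-1}w_j$ with $\tilde u\in U$; so if your spanning claim were available, the entire weight-counting and $V_0$-algebra discussion would be unnecessary --- which is another sign that the real difficulty has been pushed into the unjustified first step.

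For comparison, the paper sidesteps both issues: Lemma~2.4 of \cite{M04} spans $W$ by monomials $v^1(i_1)\cdots v^k(i_k)w^j$ with \emph{strictly increasing} indices $i_1<\cdots<i_k$, and Theorem~2.7 of \emph{loc.\ cit.} ($\bN$-gradability) bounds $i_k$ from above. Distinctness of the indices then forces the length and total weight of any monomial with $i_1\geq -1$ to be bounded, while any monomial with $i_1\leq -2$ already lies in $C_2(W)$; in particular no separate treatment of $V_0$ is needed, since the index $-1$ can occur at most once in each monomial. If you want to repair your argument, start from one of these spanning sets rather than from a purely-negative-mode one.
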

\begin{proof}
This proof is adapted from Proposition 5.2 of \cite{ABD}, which has the same conclusion for irreducible $V$-modules under the additional assumption that the kernel of $L(0)$ is spanned by $\unit$.

Let $w^1,\ldots,w^r$ be a generating set of $W$.  By \cite{M04} Lemma 2.4, $W$ is spanned by vectors of the form $v^1(i_1)\cdots v^k(i_k)w^j$, where $v^i \in A$ for some finite set $A \subset V$ of homogeneous elements such that $V$ is spanned by $A$ and $C_2(V)$, and $i_1 < \cdots < i_k$.  By \textit{loc. cit.} Theorem 2.7, $W$ is $\bN$-gradable, so $v^k(i_k)w^j = 0$ whenever $i_k$ is sufficiently large (depending on the maximal weight of $A$ and the weights of $w^j$).  We conclude that there is some $N > 0$ such that if $v^1(i_1)\cdots v^k(i_k)w$ is nonzero with weight greater than $N$, then $i_1 \leq -2$.  This implies $W$ is spanned by the combination of $C_2(W)$ together with the finite dimensional subspace $W_{\leq N}$ spanned by homogeneous vectors of weight at most $N$.
\end{proof}

\begin{lem} \label{lem:tensor-products-work-well}
If $V$ is non-negatively graded and $C_2^0$-cofinite, then the category of $V$-modules is a braided monoidal category under $\boxtimes = \boxtimes_{P(1)}$.  All modules have $L(0)$-spectrum supported on a finite union of cosets of $\bZ$ in $\bQ$, and all intertwining operators have nonzero coefficients attached to $z^\alpha (\log z)^\beta$ for $(\alpha,\beta) \in D \times \{0,1,\ldots,r\}$ for $D$ a finite union of cosets of $\bZ$.  For any $k$ composable intertwining operators $\cY_1, \cdots, \cY_k$, and elements $w'_{(0)}, w_{(1)},\ldots,w_{(k+1)}$ of suitable modules, the genus zero $k$-point function $\langle w'_{(0)}, \cY_1(w_{(1)}, x_1) \cdots \cY_k(w_{(k)}, x_k) w_{(k+1)} \rangle$ converges absolutely uniformly on compact subsets of the open domain
\[ \{ (x_1,\ldots,x_k) \in \bC^k | |x_1| > |x_2| > \cdots > |x_k| > 0 \} \]
to a holomorphic function that extends to a multivalued function that is a solution to a regular singular differential equation, and the branch locus is supported on the divisors $x_i = x_j$, $x_i = 0$, and $x_i = \infty$.
\end{lem}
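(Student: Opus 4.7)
The plan is to reduce the statement to the hypotheses needed for the Huang-Lepowsky-Zhang tensor product theory (in the formulation of \cite{HP}) and then invoke the standard conclusions: the existence of the braided monoidal structure, the form of intertwining operators, and the convergence plus regular-singular differential equations for genus-zero $k$-point functions.

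First I would gather the input finiteness properties. Since $C_2^0(V) \subseteq C_2(V)$, the VOA $V$ is $C_2$-cofinite, so by Lemma \ref{lem:modules-are-c2} every finitely generated $V$-module is $C_2$-cofinite, and by Lemma \ref{lem:finite-length} every $V$-module has finite length. The proof of Lemma \ref{lem:finite-length} also yields $C_1^a$-cofiniteness of $V$ and finite dimensionality of $A_N(V)$ for all $N\geq 0$. Combining this with the Zhu-algebra analysis in \cite{M04}, finite dimensionality of $A_N(V)$ forces the generalized $L(0)$-eigenspaces of any finitely generated $V$-module to be finite dimensional, forces the set of $L(0)$-eigenvalues on any such module to lie in a finite union of cosets of $\bZ$ in $\bQ$, and yields a uniform bound on the sizes of $L(0)$ Jordan blocks appearing on any fixed module. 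This last bound is what controls the powers of $\log z$ in an intertwining operator, giving the stated $(\alpha,\beta)\in D\times\{0,1,\ldots,r\}$ support.

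Next I would apply Huang's differential-equation machinery. With $C_1$-cofiniteness of $V$ and $C_2$-cofiniteness of every finitely generated $V$-module now in hand, the arguments of \cite{HD} Sections 3 and 7, pulled back to our setting by Section \ref{sec:literature}, show that any formal $k$-point matrix coefficient $\langle w'_{(0)}, \cY_1(w_{(1)},x_1)\cdots \cY_k(w_{(k)},x_k) w_{(k+1)}\rangle$ satisfies a system of linear ODEs in $x_1,\ldots,x_k$ whose only singularities on the compactified configuration space are regular and supported on the diagonals $x_i=x_j$ and the divisors $x_i=0,\infty$. Absolute uniform convergence on $\{|x_1|>\cdots>|x_k|>0\}$ then follows from the lower-truncation property of each intertwining operator together with the standard Frobenius analysis of solutions near a regular singular point, and analytic continuation to a multivalued holomorphic function off the branch locus is automatic. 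Given convergence, associativity, and commutativity of products of intertwining operators, the braided monoidal structure under $\boxtimes_{P(1)}$ on the category of $V$-modules is supplied by the HLZ program as packaged in \cite{HP}.

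The main obstacle is not any one step in isolation but the bookkeeping task of verifying that every hypothesis required by \cite{HP}, \cite{HD}, and the HLZ tensor category construction goes through under our weakened assumption of non-negative grading plus $C_2^0$-cofiniteness, rather than the CFT-type or positive-energy assumptions under which these results are typically stated. Most of the substantive replacements have already been carried out via Lemmata \ref{lem:finite-length} and \ref{lem:modules-are-c2}, so what remains is primarily to match the resulting property list against the explicit hypothesis list of \cite{HP} and to track the dependence of the $\log$-exponent bound $r$ on the modules involved.
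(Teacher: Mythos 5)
Your proposal is correct and follows essentially the same route as the paper: both reduce the statement to the hypothesis list assembled in \cite{HP} (namely $C_1^a$-cofiniteness of $V$, finite dimensionality of $A_N(V)$, and $\bR$-gradedness plus $C_1^0$-cofiniteness of irreducible modules), verify those conditions via Lemmas \ref{lem:finite-length} and \ref{lem:modules-are-c2}, and then quote the HLZ/Huang machinery for the braided monoidal structure, the spectrum and $\log$-exponent bounds (from \cite{M04}), and the convergence and regular-singularity of $k$-point functions. The only cosmetic difference is that you cite \cite{HD} for the differential equations where the paper points to the logarithmic versions in \cite{HLZ7}/\cite{HLZ8}.
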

\begin{proof}
We first point out where the claims can be found, without mentioning the necessary hypotheses.  The braided monoidal category assertion follows from Theorem 12.15 of \cite{HLZ8}.  The last claim, about about $k$-point functions, is given as \cite{HLZ7} Theorem 11.8.  The claim about $L(0)$-spectrum follows from \cite{M04} Corollary 5.10.  The claim about the form of intertwining operators follows from the regular singular property of one-point functions and what we know about the $L(0)$-spectrum.  

The Huang-Lepowsky-Zhang claims depend on assumptions that are scattered through the opus, but they are gathered in section 4 of \cite{HP}.  In particular, Huang proves that the assumptions follow from the following hypotheses:
\begin{enumerate}
\item $V$ is $C_1^a$-cofinite (see the proof of Lemma \ref{lem:finite-length}).
\item There is some $N > 0$ such that $A_N(V)$ is finite dimensional, and such that for any two irreducible modules, the real parts of their lowest $L(0)$-eigenvalues differs by at most $N$.
\item Every irreducible $V$-module $W$ is $\bR$-graded and $C_1^0$-cofinite, i.e., the subspace $C_1^0(W)$ satisfies $\dim W/C_1^0(W) < \infty$.
\end{enumerate}
The $C_1^a$-cofiniteness is shown in Lemma \ref{lem:finite-length}.  The second condition follows from the fact that there are only finitely many isomorphism classes of irreducible $V$ modules, together with the finite-dimensionality result of \cite{M04} Theorem 2.5.  The $\bR$-graded property is implied by the $L(0)$-spectrum claim, and the $C_1^0$-cofiniteness of modules follows from $C_2$-cofiniteness of modules given in Lemma \ref{lem:modules-are-c2}.  Specifically, the derivative axiom implies $u_{-2} w = (L(-1) u)_{-1} w$, meaning $C_2(W) \subseteq C_1^0(W)$.
\end{proof}

\begin{lem} \label{lem:stt-nonzero}
Let $T$ be a regular VOA that is non-negatively graded.  Then the $S$-matrix element $s_{TT} \neq 0$.
\end{lem}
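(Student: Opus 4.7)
The plan is to reduce the claim to Huang's proof of the Verlinde formula in \cite{HV}, which identifies $s_{TT}$ as the reciprocal of the positive global dimension of the modular tensor category of $T$-modules. Since Huang's proof is stated under a CFT type hypothesis, the task is to verify that the strictly weaker assumption of non-negative grading suffices in its place.

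First I would assemble the hypotheses Huang actually uses. As the discussion preceding Lemma \ref{lem:finite-length} already indicates, in the proof of \cite{HV} Theorem 7.2 the CFT type condition is invoked only through the citation of \cite{ABD} Proposition 5.2, and Huang explicitly notes that this citation is being used solely to deduce three ingredients: $C_2$-cofiniteness of every $T$-module, complete reducibility of every $\bN$-gradable weak $T$-module, and non-negativity of the $L(0)$-spectrum on $T$. Complete reducibility is regularity, and non-negative grading is our hypothesis. For $C_2$-cofiniteness, regularity implies $C_2$-cofiniteness of $T$ itself by standard arguments that go through in the non-negatively graded setting, and Lemma \ref{lem:modules-are-c2} upgrades this to every $T$-module.

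Next I would verify that the auxiliary results from \cite{HD} invoked inside Huang's proof remain valid under these weaker hypotheses. Section 3 of \cite{HD} works with VOAs whose modules are $C_2$-cofinite and $\bR$-graded; both conditions hold here, the first by the preceding paragraph and the second by Lemma \ref{lem:tensor-products-work-well}. With all hypotheses met, Huang's proofs of modular invariance of one-point trace functions and of the Verlinde formula apply verbatim to $T$.

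With Huang's conclusion in hand, the nonvanishing is immediate: in the resulting modular tensor category one has $s_{TT} = 1/D$, where $D = \sqrt{\sum_L d_L^2}$ is the positive real global dimension summed over the finitely many irreducible $T$-modules and $d_L$ denotes the quantum dimension of $L$; hence $s_{TT} \neq 0$. The main obstacle, and the only real work in the argument, is the bookkeeping: a careful line-by-line reading of \cite{HV} and the cited sections of \cite{HD} is needed to confirm that the CFT type assumption is never tacitly used beyond the single place Huang himself isolated. No new genus-one machinery or substantive new ideas are required.
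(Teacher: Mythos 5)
Your proposal is correct and follows essentially the same route as the paper: reduce to Theorem 5.5 of \cite{HV}, observe that the CFT type hypothesis enters only through the results of \cite{HD} cited there, and replace it by the three weaker conditions ($C_2$-cofiniteness of modules via Lemma \ref{lem:modules-are-c2}, complete reducibility from regularity, and non-negative $L(0)$-spectrum by assumption). The only cosmetic difference is that you spell out the nonvanishing via the global-dimension formula $s_{TT}=1/D$, whereas the paper simply cites Huang's theorem for the conclusion.
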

\begin{proof}
Under the additional assumption that $\Ker L(0)$ is spanned by $\unit$, this is a consequence of Theorem 5.5 of \cite{HV}, so it suffices to show that this assumption is unnecessary.  As we mentioned earlier in this section, Huang's proof does not directly use this assumption, but it depends on results in \cite{HD} that are stated with this assumption.  In fact, the proofs there only use three weaker conditions that hold true for us: $C_2$-cofiniteness of irreducible $V$-modules is given by Lemma \ref{lem:modules-are-c2}, complete reducibility of $\bN$-gradable weak $V$-modules follows immediately from regularity, and the non-negativity of the $L(0)$-spectrum on $V$ is by assumption.
\end{proof}

\subsection{Regularity}

We need to relate the notion of regularity to the notions that are actually used in this paper.  First, we wish to show that regularity implies $C_2^0$-cofiniteness.  Li showed that regularity implies $C_2$-cofiniteness in \cite{Li99}, and his proof adapts well to our slightly stronger statement.

\begin{lem}
Let $V$ be a regular VOA.  Then for any $n \geq 2$, $V$ is $C_n^0$-cofinite.
\end{lem}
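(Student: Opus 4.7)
The proof strategy is to adapt Li's argument in \cite{Li99} that regularity implies $C_2$-cofiniteness. Li constructs an auxiliary weak $V$-module whose minimal-degree subspace (``top'') surjects onto $V/C_2(V)$; regularity then forces this weak module to decompose as a direct sum of finitely many irreducible ordinary $V$-modules with finite-dimensional weight spaces, so $V/C_2(V)$ must be finite-dimensional.

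Two adaptations are needed. First, to treat general $n\geq 2$ rather than only $n=2$, one replaces the occurrences of the mode $v_{-2}$ in Li's construction by $v_{-n}$, producing a weak $V$-module whose top surjects onto $V/C_n(V)$. Second, to pass from $C_n$ to the weight-restricted $C_n^0$, one imposes the defining relations only for those generators $v_{-n}u$ with $\wt(v)\geq 1$. A key structural check is that $C_n^0(V)$ is stable under $L(-1)$: the derivative axiom together with the identity $[L(-1),v_{-n}]=(L(-1)v)_{-n}=n v_{-n-1}$ yields $L(-1)(v_{-n}u)=(L(-1)v)_{-n}u+v_{-n}L(-1)u$, and both summands remain in $C_n^0(V)$ since $\wt(L(-1)v)=\wt(v)+1\geq 2$ whenever $\wt(v)\geq 1$. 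This guarantees that the modified quotient carries a well-defined weak $V$-module structure.

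With the modified auxiliary weak $V$-module in hand, the same regularity argument applies. By regularity, the module decomposes as a direct sum of irreducible ordinary $V$-modules from finitely many isomorphism classes, each with finite-dimensional weight spaces, so its top subspace must be finite-dimensional. Since the top surjects onto $V/C_n^0(V)$, this gives $\dim V/C_n^0(V)<\infty$, completing the proof.

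The main obstacle is the adaptation of Li's weak-module construction to the weight-restricted setting. Specifically, one must verify that imposing only the $C_n^0$-relations (with $\wt(v)\geq 1$) still yields a nonzero weak $V$-module satisfying the Jacobi identity, and that the top of the resulting module is correctly identified with $V/C_n^0(V)$ rather than a proper quotient. These checks are parallel to the corresponding verifications in Li's original argument, but require threading the weight restriction $\wt(v)\geq 1$ through each step of the construction.
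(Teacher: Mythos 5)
Your proposal reopens Li's weak-module construction and tries to thread the weight restriction through it, but it stops exactly at the hard part: you yourself flag the verification that the modified induced module is a nonzero weak module whose top is $V/C_n^0(V)$ as ``the main obstacle'' and only assert that the checks are ``parallel'' to Li's, without performing them. The paper does not reopen the construction at all. It quotes its output as a black box --- the criterion (Corollaries 2.6 and 2.11 of \cite{Li99}) that a subspace $U$ of a regular VOA has finite codimension if and only if for every $v \in V$ there is some $k$ with $v_m V \subseteq U$ for all $m \leq k$ --- and then simply verifies this hypothesis for $U = C_n^0(V)$. That reduction is available to you and makes the whole induced-module discussion unnecessary.

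More seriously, the one concrete check you do supply --- that $C_n^0(V)$ is stable under $L(-1)$ --- is not the check that the weight restriction actually demands. The genuine new difficulty is that $C_n^0(V)$ is spanned only by $v_{-n}u$ with $\wt(v) \geq 1$, so its definition says nothing about the modes of vectors of weight $\leq 0$ (and the lemma does not assume CFT type, so such vectors beyond multiples of the vacuum genuinely occur); yet any route to finite codimension, whether via Li's criterion or via identifying $V/C_n^0(V)$ with the top of an induced module, must control $v_m V$ for these $v$ as well. The paper handles this with the identity $(-m-1)\cdots(-m-j)\,v_m u = (L(-1)^j v)_{m+j}u$ for $j = 1 - \wt(v)$: then $L(-1)^j v$ has weight $1$, the scalar is nonzero for $m \leq -j-n$, and hence $v_m u \in C_n^0(V)$ for all $m \leq \wt(v)-n-1$. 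Your observation that $L(-1)C_n^0(V) \subseteq C_n^0(V)$ is a different and much weaker statement and does not substitute for this; without it the relations you impose never see the weight-$\leq 0$ vectors, and there is no reason the top of your modified module is finite-dimensional. This missing step is the actual content of the lemma, so the proposal as written has a genuine gap.
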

\begin{proof}
By Corollary 2.6 and Corollary 2.11 of \cite{Li99}, a subspace $U$ of a regular VOA $V$ satisfies $\dim V/U < \infty$ if and only if for any $v \in V$, there exists some $k \in \bZ$ such that for all $m \leq k$, $v_m V \subseteq U$.  We therefore apply this criterion to $U = C_n^0(V)$.

If $\wt(v) > 0$, then $k = -n$ is clearly satisfactory.  If $\wt(v) \leq 0$, then we let $j = 1 - \wt(v)$, so the $L(-1)$-derivative property implies $(-m-1)\cdots(-m-j) v_m u = (L(-1)^j v)_{m+j} u \in C_n^0(V)$ for all $u \in V$ and all $m \leq -j-n$.  Thus, we may take $k = \wt(v)-n-1$ when $\wt(v) \leq 0$.
\end{proof}

Our main theorem asserts that a VOA satisfying certain conditions is regular, but the Moore-Seiberg-Huang machinery yields a statement that may appear to be weaker, namely that any irreducible module is projective in the category of finitely generated (logarithmic) modules.  Regularity is already well-studied in the literature, and so we don't need to consider any new techniques to obtain our reduction.

\begin{lem} \label{lem:semisimple-anv}
Let $V$ be a non-negatively graded $C_2^0$-cofinite VOA, and suppose any surjection from a (finitely generated) $V$-module $U$ to a simple $V$-module $W$ splits.  Then the finite dimensional associative algebras $A_n(V)$ are semisimple for all $n \in \bZ_{\geq 0}$.
\end{lem}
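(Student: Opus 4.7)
The strategy is to use the higher Zhu-algebra adjunction of Dong--Li--Mason to transport the given splitting hypothesis from $V$-modules to $A_n(V)$-modules. Recall the functor $\Omega_n$ that sends an admissible $V$-module $U = \bigoplus_{k\ge 0} U(k)$ to the $A_n(V)$-module $\bigoplus_{k=0}^n U(k)$, and its left adjoint $S_n$ (the generalized Verma/induction construction), chosen so that the unit $M \to \Omega_n(S_n(M))$ is an isomorphism for every $A_n(V)$-module $M$.

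First I would promote the hypothesis to full semisimplicity of $V$-modules. By Lemma \ref{lem:finite-length}, every $V$-module has finite length, and the splitting hypothesis is the projectivity of every simple $V$-module in the category of (finitely generated logarithmic) $V$-modules. A routine induction on composition length then shows that every $V$-module decomposes as a direct sum of simple $V$-modules. Separately, using $C_2^0$-cofiniteness together with non-negative grading, one checks that $S_n$ sends finite-dimensional $A_n(V)$-modules to $V$-modules in the paper's sense (i.e.\ finitely generated logarithmic $V$-modules), since $S_n(M)$ is $V$-generated by $M$ and has finite-dimensional graded pieces.

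To prove $A_n(V)$ is semisimple it suffices, for each surjection $\phi : \widetilde{M} \twoheadrightarrow N$ of finite-dimensional $A_n(V)$-modules with $N$ simple, to produce a section. Apply the right-exact functor $S_n$ to obtain a surjection $S_n(\widetilde{M}) \twoheadrightarrow S_n(N)$. Since $\Omega_n(S_n(N)) = N$ is simple and nonzero, the Dong--Li--Mason analysis produces a simple $V$-module quotient $L(N) := S_n(N)/J_N$, where $J_N$ is the maximal $V$-submodule with $\Omega_n(J_N) = 0$; it satisfies $\Omega_n(L(N)) = N$. Composing gives a surjection $S_n(\widetilde{M}) \twoheadrightarrow L(N)$ of $V$-modules onto a simple $V$-module, which splits by hypothesis; pick a section $\sigma : L(N) \hookrightarrow S_n(\widetilde{M})$. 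Since $\Omega_n$ is a right adjoint, it preserves monomorphisms and identities, so applying it yields an injection $\Omega_n(\sigma) : N = \Omega_n(L(N)) \hookrightarrow \Omega_n(S_n(\widetilde{M})) = \widetilde{M}$, and a direct chase through the unit isomorphisms confirms $\phi \circ \Omega_n(\sigma) = \operatorname{id}_N$, giving the desired splitting of $\phi$.

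The main obstacle is setting up the Dong--Li--Mason machinery cleanly in the present non-CFT-type framework. Concretely, I must verify that $S_n(M)$ for finite-dimensional $M$ lies in the paper's category of $V$-modules rather than the larger category of admissible weak $V$-modules, and that the universal simple quotient $L(N)$ with $\Omega_n(L(N)) = N$ exists for every simple $A_n(V)$-module $N$. Both points rest on $C_2^0$-cofiniteness and the finite-length result of Lemma \ref{lem:finite-length}: the first to bound graded pieces of the induced module, the second to guarantee a maximal proper submodule with trivial intersection at the top.
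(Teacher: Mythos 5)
Your overall strategy---pushing the splitting hypothesis through the Dong--Li--Mason induction/restriction adjunction between $A_n(V)$-modules and admissible $V$-modules, with Lemma \ref{lem:finite-length} supplying finite length---is the same as the paper's. The gap is in the properties you assign to the functors. With $\Omega_n(U)=\bigoplus_{k=0}^n U(k)$, the unit $M\to\Omega_n(S_n(M))$ is \emph{not} an isomorphism for $n\geq 1$, and the simple quotient $L(N)$ with $\Omega_n(L(N))=N$ does not exist for most simple $A_n(V)$-modules $N$: even in the good case where $N$ does not factor through $A_{n-1}(V)$, the associated simple admissible module $L$ has $L(n)\cong N$ but $L(0)\neq 0$, so $\bigoplus_{k=0}^n L(k)\supsetneq N$. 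Replacing your $\Omega_n$ by the top graded piece $U\mapsto\Omega_n(U)/\Omega_{n-1}(U)$ repairs that case, but then the simple $A_n(V)$-modules that \emph{do} factor through $A_{n-1}(V)$ become the problem: for such $N$ the corresponding simple $V$-module carries $N$ in some degree $m<n$, its degree-$n$ piece is not $N$, and you cannot simply descend to $A_{n-1}(V)$ because the module $\widetilde{M}$ you are trying to split need not factor through $A_{n-1}(V)$. So as written, the key step ``apply $\Omega_n$ to the section $\sigma$ and chase through the unit isomorphisms'' rests on identifications that fail.

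The paper's proof is arranged precisely to sidestep this: for an \emph{arbitrary} finite-dimensional $A_n(V)$-module $M$ it induces $U=L_n(V_n\oplus M)$---the extra summand $V_n$ anchors the grading at degree zero, so that $\Omega_n(U)/\Omega_{n-1}(U)\cong V_n\oplus M$---and then, rather than transporting a single section, it shows $U$ equals its socle: any simple submodule of $U/\mathrm{soc}(U)$ splits off from its preimage by your hypothesis and hence already lies in the socle, using finite length. Restricting the resulting decomposition of $U$ into simples exhibits $V_n\oplus M$, hence $M$, as a direct sum of simple $A_n(V)$-modules, which gives semisimplicity without ever needing the unit of the adjunction to be an isomorphism on non-simple modules or a classification of simple $A_n(V)$-modules. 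Note also that your first paragraph (full semisimplicity of the category of $V$-modules) is correct but never used in your argument; it is essentially the socle argument the paper applies to the one induced module that matters. If you reorganize along these lines, your proof becomes the paper's.
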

\begin{proof}
This is essentially Theorem 4.10 of \cite{DLM2}, and we follow the same strategy, but our hypotheses are slightly different.  For any finite dimensional $A_n(V)$-module $M$, we consider the induced $V$-module $U = L_n(V_n \oplus M)$.  $U$ is finitely generated, because we may use any basis of the finite dimensional space $V_n \oplus M$.

We claim that $U$ splits as a direct sum of irreducible $V$-modules.  Let $soc(U)$ be the socle, i.e., the direct sum of irreducible $V$-submodules of $U$.  By Lemma \ref{lem:finite-length}, the quotient $V$-module $U/soc(U)$ has finite length, so if it is nonzero, then there is some irreducible $V$-submodule $W$.  By projectivity of $W$, the projection from the preimage of $W$ in $U$ to $W$ admits a splitting.  Thus, the preimage of $W$ lies in the socle, and we conclude that $U/soc(U)$ is zero.

Thus, $\Omega_n(U)/\Omega_{n-1}(U) \cong V_n \oplus M$ is a direct sum of finite dimensional irreducible $A_n(V)$-modules, and hence the same is true of $M$.  Complete reducibility for all finite dimensional modules of a finite dimensional unital associative algebra implies the algebra is semisimple.
\end{proof}

\begin{prop} \label{prop:regularity-from-projectivity}
Suppose $V$ is a non-negatively graded $C_2^0$-cofinite VOA, such that any irreducible $V$-module is projective in the category of (finitely generated logarithmic) $V$-modules.  Then $V$ is regular.
\end{prop}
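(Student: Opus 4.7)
The plan is to deduce regularity in three steps: first, show that every finitely generated logarithmic $V$-module is a direct sum of irreducibles; second, that every irreducible such module is ordinary; and third, lift complete reducibility to arbitrary weak $V$-modules.

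For the first step, I would induct on the finite length provided by Lemma~\ref{lem:finite-length}. The base case is immediate; in the inductive step a finite-length module $W$ has an irreducible quotient $W \twoheadrightarrow W''$, and the projectivity hypothesis splits this surjection, giving $W \cong W'' \oplus K$ with $K$ of strictly smaller length and semisimple by induction. As a byproduct this verifies the hypothesis of Lemma~\ref{lem:semisimple-anv}, although we do not need to invoke the resulting semisimplicity of the $A_n(V)$ directly.

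For the second step, let $W$ be an irreducible finitely generated logarithmic $V$-module and write $L(0) = L(0)_{ss} + N$ for the Jordan decomposition on the direct sum of generalized $L(0)$-eigenspaces. Since each mode $v_n$ with $v$ homogeneous shifts generalized $L(0)$-eigenspaces by exactly $\wt(v) - n - 1$, the relation $[L(0), v_n] = (\wt(v) - n - 1) v_n$ forces $[L(0)_{ss}, v_n] = (\wt(v) - n - 1) v_n$, and hence $[N, v_n] = 0$. Thus $N$ is a locally nilpotent $V$-module endomorphism of $W$; if it were nonzero, $\ker N$ would be a proper nonzero $V$-submodule, contradicting irreducibility. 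Hence $L(0)$ acts semisimply on $W$, and the finiteness results recorded in Lemma~\ref{lem:tensor-products-work-well} then ensure that $W$ is ordinary.

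For the third step, let $W$ be an arbitrary weak $V$-module and pick any $0 \neq w \in W$. The cyclic submodule $V \cdot w$ is finitely generated weak, and under $C_2^0$-cofiniteness combined with non-negative grading the finiteness theorems of \cite{M04} imply that $V \cdot w$ is in fact finitely generated logarithmic. By the previous two steps, $V \cdot w$ decomposes as a direct sum of irreducible ordinary submodules of $W$, so in particular $w$ lies in a sum of irreducible ordinary submodules. Hence $W$ is the sum of all its irreducible ordinary submodules, and a standard Zorn's lemma argument—a maximal family of irreducible ordinary submodules whose internal sum is direct cannot miss any further irreducible ordinary submodule, since the intersection of any such submodule with the running direct sum is zero by irreducibility—upgrades this to a direct sum decomposition, yielding regularity. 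The main obstacle is precisely this final passage from finitely generated weak to finitely generated logarithmic, which is where the hypotheses of $C_2^0$-cofiniteness and non-negative grading enter essentially through Miyamoto's finiteness machinery.
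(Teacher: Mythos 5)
Your proof is correct, but it takes a genuinely different route from the paper's. The paper argues through the Zhu-algebra machinery: it uses Theorem 2.7 of \cite{M04} to see that every weak module is admissible, feeds the projectivity hypothesis into Lemma \ref{lem:semisimple-anv} to obtain semisimplicity of all the associative algebras $A_n(V)$, and then quotes Theorem 4.11 of \cite{DLM2} to convert that semisimplicity into rationality, finishing with the observation that irreducible admissible modules are ordinary. You instead prove complete reducibility directly at the level of modules: induction on the length supplied by Lemma \ref{lem:finite-length}, together with the splitting of irreducible quotients, handles finitely generated logarithmic modules; your Schur-type argument that the locally nilpotent part of $L(0)$ commutes with all modes (hence has nonzero kernel, hence vanishes on an irreducible) shows irreducibles are ordinary; and the reduction of an arbitrary weak module to its cyclic submodules---which are finitely generated logarithmic precisely by the same Theorem 2.7 of \cite{M04} the paper uses---combined with the standard Zorn argument finishes the proof. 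Your route avoids the $A_n(V)$ theory and the external input of \cite{DLM2} Theorem 4.11 entirely, at the cost of redoing the standard semisimple-category bookkeeping by hand; the paper's route is shorter on the page and records the semisimplicity of the $A_n(V)$ as a reusable byproduct. Both arguments ultimately rest on the same two nontrivial inputs: the finite-length statement of Lemma \ref{lem:finite-length} and Miyamoto's decomposition of weak modules into generalized $L(0)$-eigenspaces.
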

\begin{proof}
Let $P$ be a weak $V$-module.  By Theorem 2.7 of \cite{M04}, $P$ decomposes as $\bigoplus_{n=0}^\infty P(n)$, where each $P(n)$ is a direct sum of generalized eigenspaces for $L(0)$.  In particular, $P$ is admissible in the sense of \cite{DLM3}.  

By Theorem 4.11 of \cite{DLM2}, if $A_n(V)$ is semisimple for all sufficiently large $n$, then $V$ is rational, i.e., any admissible $V$-module is a direct sum of irreducible admissible submodules.  Thus, we conclude rationality from Lemma \ref{lem:semisimple-anv}.  

Any irreducible admissible $V$-module is necessarily ordinary, since $L(0)$-acts semisimply.  We conclude that any weak module is a direct sum of irreducible ordinary modules, i.e., $V$ is regular.
\end{proof}

\subsection{Analytic lemmata}

We add some minor lemmata about intertwining operators that we could not find in the literature.  For convenience, we introduce two analytic open sets:
\[ \begin{aligned}
\cO_2 &=\{(x,y)\in \bC^2 \mid |x|>|y|>|x-y|>0 \} \\
\cO_3 &= \{(x,y,z)\in \bC^3 \mid |x|>|y|>|z|>|x-z|>|y-z|>|x-y| > 0 \}
\end{aligned} \]
These are non-empty, since they contain $(3,2)$ and $(7,6,4)$, respectively.

First, we describe an extension of module structure.

\begin{lem} \label{lem:extended-module-structure}
Let $V \subseteq T$ be an inclusion of non-negatively graded $C_2^0$-cofinite vertex operator algebras, let $U$ be a $V$-module, and let $\cY \in I\binom{U}{T,U}$ be a $V$-module intertwining operator that satisfies the following conditions:
\begin{enumerate}
\item $\cY$ factors through the inclusion $U((z)) \subseteq U\{z\}[\log z]$.
\item The restriction of $\cY$ to $V \otimes U \subseteq T \otimes U$ is equal to the $V$-action.
\item For any $t^1, t^2 \in T$, $w \in U$, $w' \in U'$, we have the equality
\[ \langle w', \cY(t^1,x)\cY(t^2,y)w\rangle = \langle w',\cY(Y(t^1,x-y)t^2,y)w\rangle \]
of holomorphic functions on $\cO_2 =\{(x,y)\in \bC^2 \mid |x|>|y|>|x-y|>0 \}$.
\end{enumerate}
Then $\cY$ yields a $T$-module structure on $U$.
\end{lem}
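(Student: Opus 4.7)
The plan is to verify the weak $T$-module axioms on $U$ directly, taking $\cY$ as the action. The unit, Virasoro, and $L(-1)$-derivative axioms fall out for free: $\unit,\omega\in V$, so by condition (2) the restriction of $\cY$ coincides there with the $V$-module action, giving $\cY(\unit,z)w=w$ and identifying the two sets of Virasoro operators; the $L(-1)$-derivative property is part of the definition of an intertwining operator. Moreover, since $\cY$ is a $V$-module intertwining operator of type $\binom{U}{T,U}$ and condition (2) identifies its restriction to $V\otimes U$ with the $V$-action on $U$, the $V$-intertwiner Jacobi identity immediately gives the $T$-module Jacobi identity whenever at least one argument lies in $V$. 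What remains is the Jacobi identity for $\cY(t^1,x)$ and $\cY(t^2,y)$ with $t^1,t^2\in T$ arbitrary.

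I would reduce this to the combination of weak commutativity and weak associativity; together with the $L(-1)$-derivative property these imply the Jacobi identity by the standard formal-calculus argument. Weak associativity is the formal-series shadow of condition (3): by condition (1) the LHS of condition (3) expands as a Laurent series in integer powers of $x,y$, and by integer-gradedness of $T$ one has $Y(t^1,z)t^2\in T((z))$ so the RHS expands as a Laurent series in integer powers of $(x-y)$ and $y$. Both expansions converge on $\cO_2$, and matching coefficients for all $w'\in U'$ produces the standard weak-associativity identity $(y+z)^N\cY(t^1,y+z)\cY(t^2,y)w = (y+z)^N\cY(Y(t^1,z)t^2,y)w$ for suitable $N$.

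Weak commutativity is the principal obstacle. My plan is to apply condition (3) in the other ordering (with $(t^1,x)$ and $(t^2,y)$ swapped), yielding
\[ \langle w', \cY(t^2,y)\cY(t^1,x)w\rangle = \langle w', \cY(Y(t^2,y-x)t^1,x)w\rangle \]
on $\{|y|>|x|>|y-x|>0\}$. Combining with the skew-symmetry $Y(t^2,y-x)t^1 = e^{(y-x)L(-1)}Y(t^1,x-y)t^2$ in $T$ and the conjugation formula $\cY(e^{z_0L(-1)}a,x)=\cY(a,x+z_0)$ derived from the $L(-1)$-derivative property of $\cY$, both matrix coefficients
\[ \langle w', \cY(t^1,x)\cY(t^2,y)w\rangle \quad\text{and}\quad \langle w', \cY(t^2,y)\cY(t^1,x)w\rangle \]
are thereby exhibited as expansions, on the annular regions $|x|>|y|>0$ and $|y|>|x|>0$ respectively, of the single function $G(x,y) = \langle w', \cY(Y(t^1,x-y)t^2,y)w\rangle$. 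By Lemma \ref{lem:tensor-products-work-well} these matrix coefficients extend to multivalued holomorphic solutions of a regular singular differential equation on $\bC^2\setminus(\{x=0\}\cup\{y=0\}\cup\{x=y\})$; condition (1) and the integer-gradedness of $T$ exclude logarithmic terms and force all exponents at the three singular divisors to be integers, so the monodromy around each divisor is trivial and the analytic continuation is a single-valued meromorphic function $F$, with pole order along $\{x=y\}$ bounded uniformly by the lower truncation of $Y(t^1,z)t^2\in T((z))$. For $N$ large (depending only on $t^1,t^2$), $(x-y)^N F$ is therefore holomorphic on $\bC^\times\times\bC^\times$, and at fixed $y\neq 0$ its Laurent expansion in $x$ is unique, independent of the annular regime; this yields the coefficient-wise identity
\[ (x-y)^N \cY(t^1,x)\cY(t^2,y)w = (x-y)^N \cY(t^2,y)\cY(t^1,x)w \]
as $w'$ varies, i.e., weak commutativity.

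The main technical delicacy lies in the third paragraph: carefully justifying that the regular singular data of Lemma \ref{lem:tensor-products-work-well}, together with condition (1) and integer-gradedness, really forces a globally single-valued meromorphic extension with controlled pole at $\{x=y\}$. Once weak commutativity and weak associativity are both in hand, the Jacobi identity for $\cY(t^1,x)$ and $\cY(t^2,y)$ follows by the standard formal-calculus argument, and $\cY$ gives the claimed weak $T$-module structure on $U$.
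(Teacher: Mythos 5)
Your proposal is correct and follows essentially the same route as the paper: both orderings of condition (3), matched via skew-symmetry of $Y$ and the $L(-1)$-conjugation (the paper's ``shearing operator''), then analytic continuation via Lemma \ref{lem:tensor-products-work-well} with trivial monodromy forced by the integer-power expansions, yielding a single-valued rational function and hence rationality, commutativity, and associativity. The only cosmetic difference is that the paper concludes by citing Proposition 4.5.1 of \cite{FHL93} rather than reassembling the Jacobi identity from weak commutativity and weak associativity by hand.
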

\begin{proof}
By Lemma \ref{lem:tensor-products-work-well} the holomorphic function given by the equality on $\cO_2$ extends uniquely to a multivalued holomorphic function $f_{w',t^1,t^2,w}$ on $\bC^2 \setminus \{(x,0), (0,y), (x,x) \}$ with regular singularities on the boundary divisors (including the line at infinity).  By switching $t^1$ with $t^2$ and $x$ with $y$, we obtain the equality
\[ \langle w', \cY(t^2,y)\cY(t^1,x)w\rangle = \langle w',\cY(Y(t^2,y-x)t^1,x)w\rangle\]
on $\{(x,y)\in \bC^2 \mid |y|>|x|>|x-y|>0 \}$, which extends uniquely to a holomorphic multivalued function $f_{w',t^2,t^1,w}$ on $\bC^2 \setminus \{(x,0), (0,y), (x,x) \}$ with regular singularities as before.

These two functions are related by the shearing operator $e^{(x-y)\frac{\pd}{\pd y}}$, which sends a holomorphic function $f(x-y,y)$ on $\{(x,y)\in \bC^2 \mid |y|>|x-y|>0 \}$ to $e^{(x-y)\frac{\pd}{\pd y}}f(x-y,y) = f(x-y,x)$ on $\{(x,y)\in \bC^2 \mid |x|>|x-y|>0 \}$.  On the intersection of these domains, we may compare $f_{w',t^1,t^2,w}$ with $f_{w',t^2,t^1,w}$, and we claim that they match.  To show this, we note that skew-symmetry for $Y$ and the $L(-1)$-derivative property for $\cY$ yield
\[ \begin{aligned}
\langle w',\cY(Y(t^1,x-y)t^2,y)w\rangle &= \langle w',\cY(e^{(x-y)L(-1)}Y(t^2,y-x)t^1,y)w\rangle \\
&= \langle w', e^{(x-y)\frac{\pd}{\pd y}} \cY(Y(t^2,y-x)t^1,y)w \rangle \\
&= e^{(x-y)\frac{\pd}{\pd y}} \langle w', \cY(Y(t^2,y-x)t^1,y)w \rangle \\
&= \langle w', \cY(Y(t^2,y-x)t^1,x)w \rangle.
\end{aligned} \]
We therefore find that $f_{w',t^1,t^2,w}$ has the following formal expansions:
\[ \begin{aligned}
\langle w', \cY(t^1,x)\cY(t^2,y)w\rangle &\in \bC((x))((y)) \\
\langle w',\cY(Y(t^1,x-y)t^2,y)w\rangle &\in \bC((y))((x-y)) \\
\langle w',\cY(Y(t^2,y-x)t^1,x)w\rangle &\in \bC((x))((x-y)) \\
\langle w', \cY(t^2,y)\cY(t^1,x)w\rangle &\in \bC((y))((x))
\end{aligned} \]
From these expansions, we see that the function $f_{w',t^1,t^2,w}$ has trivial formal monodromy on the boundary divisors, and thus descends to a single-valued meromorphic function on $\bP^2$ of the form $f_{w',t^1,t^2,w} \in \bC[x,y][x^{-1},y^{-1},(x-y)^{-1}]$.  This means that the operator $\cY$ satisfies the rationality, commutativity, and associativity conditions in \cite{FHL93} Proposition 4.5.1, so it yields a $T$-module structure on $U$.
\end{proof}

Second, we show that associativity is compatible with composition and iteration of intertwining operators.

\begin{lem} \label{lem:generalized-associativity}
Suppose $V$ is a non-negatively graded $C_2^0$-cofinite vertex operator algebra, and suppose we have $V$-modules $\{ U_i \}_{i=1}^{14}$ and the following intertwining operators:
\[ \begin{array}{cccc}
\cY_1 \in I\binom{U_4}{U_2,U_3} & \cY_2 \in I\binom{U_6}{U_1,U_4}
&\cY_3 \in I\binom{U_5}{U_1,U_2} &\cY_4 \in I\binom{U_6}{U_5,U_3} \\ \\
\cY_5 \in I\binom{U_1}{U_7,U_8} &\cY_6 \in I\binom{U_2}{U_9,U_{10}} 
&\cY_7 \in I\binom{U_3}{U_{11},U_{12}} &\cY_8 \in I\binom{U_{13}}{U_{14}, U_6} 
\end{array} \]
such that $\cY_1$, $\cY_2$, $\cY_3$, and $\cY_4$ satisfy
\begin{equation} \label{eq:basic-associativity}
\langle w'_6, \cY_2(u^1, x)\cY_1(u^2, y)u^3 \rangle = \langle w'_6,\cY_4(\cY_3(u^1, x-y)u^2, y)u^3 \rangle
\end{equation}
for all $u^1 \in U_1$, $u^2 \in U_2$, $u^3 \in U_3$, $w'_6 \in U'_6$ on $\cO_2$.  Then the following equalities of holomorphic functions on $\cO_3$ hold for all $u_i \in U_i$ and $w'_i \in U'_i$:
\[\begin{aligned}
\langle w'_{13}, &\cY_8(u^{14}, x) \cY_2(u^1, y)\cY_1(u^2, z) u^3 \rangle \\
&= \langle w'_{13}, \cY_8(u^{14}, x) \cY_4(\cY_3(u^1, y-z)u^2, z) u^3 \rangle \\
\langle w'_6, &\cY_2(u^1,x) \cY_1(\cY_6(u^9, y-z)u^{10}, z)u^3 \rangle \\
&= \langle w'_6, \cY_4(\cY_3(u^1, x-z)\cY_6(u^9, y-z)u^{10}, z) u^3 \rangle \\
\langle w'_6, &\cY_2(\cY_5(u^7, x-y) u^8, y)\cY_1(u^2, z)u^3 \rangle \\
&= \langle w'_6, \cY_4(\cY_3(\cY_5(u^7, x-y)u^8, y-z)u^2, z)u^3 \rangle \\
\langle w'_6, &\cY_1(u^1, x) \cY_2(u^2, y) \cY_7(u^{11}, z)u^{12} \rangle \\
&= \langle w'_6, \cY_4(\cY_3(u^1, x-y)u^2, y)\cY_7(u^{11}, z)u^{12} \rangle 
\end{aligned}\]
\end{lem}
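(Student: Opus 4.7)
The plan is to derive all four identities from the hypothesis (\ref{eq:basic-associativity}) via a unified projection-and-summation argument, using the convergence and analytic continuation of multi-point correlators from Lemma \ref{lem:tensor-products-work-well}. Both sides of each claimed identity are multivalued holomorphic functions on a connected open neighborhood of $\cO_3$ satisfying the same regular singular differential system, so it suffices to verify equality on any non-empty open subdomain where both sides converge absolutely under Huang's branch cut convention, and then propagate to $\cO_3$ by unique analytic continuation.

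For identity 1, let $P_n : U_6 \to U_6^{(n)}$ denote the projection onto the $n$-th generalized $L(0)$-eigenspace. Since $U_6^{(n)}$ is finite dimensional, its dual basis embeds in $U_6'$; applying (\ref{eq:basic-associativity}) with $w_6'$ ranging over this dual basis shows that $P_n \cY_2(u^1, y)\cY_1(u^2, z) u^3$ and $P_n \cY_4(\cY_3(u^1, y-z)u^2, z) u^3$ represent the same multivalued $U_6^{(n)}$-valued holomorphic function on the complement of the usual singular divisors. Apply $\langle w_{13}', \cY_8(u^{14}, x)\,\cdot\,\rangle$ to each projection and sum over $n$: Lemma \ref{lem:tensor-products-work-well} makes each of the resulting sums absolutely convergent on an open subdomain of $\cO_3$ (containing e.g. $(7,6,4)$), and summand-by-summand equality gives equality of the sums there. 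Identity 4 is handled identically, now projecting the insertion $\cY_7(u^{11}, z) u^{12}$ onto each $U_3^{(n)}$ and applying (\ref{eq:basic-associativity}) with $u^3$ replaced by the finite-dimensional projections.

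For identity 2, let $Q_m$ project $U_2$ onto its $m$-th generalized $L(0)$-eigenspace $U_2^{(m)}$. Each $Q_m \cY_6(u^9, y-z) u^{10}$ is a genuine element of the finite-dimensional space $U_2^{(m)}$, so hypothesis (\ref{eq:basic-associativity}) applies directly with $u^2$ replaced by $Q_m \cY_6(u^9, y-z) u^{10}$, yielding the $m$-th summand of identity 2. Summing over $m$ on a subdomain of $\cO_3$ where convergence holds (guaranteed by applying Lemma \ref{lem:tensor-products-work-well} to the three-point function built from $\cY_2, \cY_1, \cY_6$) and propagating by analytic continuation yields identity 2. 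Identity 3 follows by the same argument with $U_1$ and $\cY_5(u^7, x-y) u^8$ in place of $U_2$ and $\cY_6(u^9, y-z) u^{10}$.

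The principal obstacle is to ensure that the projection-and-summation interchange produces the correct analytic branch of each multivalued function, rather than inadvertently jumping to a different monodromy sheet. This is handled by carrying out all arguments inside a single simply connected open subset of $\cO_3$ on which each iterated expansion converges absolutely, so that Fubini permits summation and projection to commute with analytic continuation, and Huang's branch cut convention is preserved throughout. Once this is in place, the extension to all of $\cO_3$ is automatic from the uniqueness of analytic continuation for solutions of regular singular ODEs along a connected path.
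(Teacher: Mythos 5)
Your proposal is correct and follows essentially the same strategy as the paper: decompose the extra insertion ($\cY_8$, $\cY_6$, $\cY_5$, or $\cY_7$) into graded finite-dimensional pieces, apply the hypothesis \eqref{eq:basic-associativity} to each piece, and resum using absolute convergence. The only cosmetic differences are that the paper extracts formal $x^\alpha(\log x)^k$-coefficients (dualizing via $w'_6 = w'_{13}\circ\phi_{\alpha,k}$) rather than projecting onto generalized $L(0)$-eigenspaces, and it gets absolute convergence on all of $\cO_3$ directly from Proposition 12.7 of \cite{HLZ8} (which, unlike the statement of Lemma \ref{lem:tensor-products-work-well}, covers iterated as well as composed intertwining operators), so no separate analytic-continuation step from a subdomain is required.
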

\begin{proof}
By passing through the equivalence between $P(z)$ intertwining operators and intertwining operators, we find that all of the formal power series listed are pointwise absolutely convergent on $\cO_3$ by Proposition 12.7 of \cite{HLZ8}.  They yield holomorphic functions by the absolute uniform convergence on compact subsets, by a standard argument (e.g., it is straightforward to extend Lemma 7.7 of \cite{HLZ5} to multiple variables).

For the first equality, we fix $u^{14}$ and expand $\cY_8(u^{14},x)$ as a formal logarithmic power series, so that for each $(\alpha, k) \in \bQ \times \bN$, the $x^\alpha (\log x)^k$ term is a linear map $\phi_{\alpha,k}: U_6 \to U_{13}$ that translates generalized $L(0)$-eigenvalues by $\alpha$.  Let $w'_6 = w'_{13} \circ \phi_{\alpha,k}$.  This is an element in $U'_6$, so both sides are logarithmic power series in $x$ where the $x^\alpha (\log x)^k$ term has the form $\langle w'_6, \cY_2(u^1, y)\cY_1(u^2, z) u^3 \rangle$ on the left and $\langle w'_6, \cY_4(\cY_3(u^1, y-z)u^2, z) u^3 \rangle$ on the right.  By equation \eqref{eq:basic-associativity}, the coefficients are equal as long as $(y,z) \in \cO_2$, so we therefore have an equality of logarithmic power series in $x$ with coefficients in holomorphic functions on $\cO_2$.  Then for each point in $\cO_3$, we obtain an equality of absolutely convergent logarithmic power series in the single variable $x$.  By the convergence properties of these power series, we obtain equality of holomorphic functions.

The remaining cases follow similar arguments, where by isolating coefficients of $\cY_6$, $\cY_5$, and $\cY_7$, we obtain an equality of logarithmic power series with coefficients in holomorphic functions on $\cO_2$.  These yield equality of holomorphic functions on $\cO_3$ by the convergence properties we mentioned.
\end{proof}

\section{Projectivity of the fixed-point subalgebra}

If $V$ is a VOA, then a $V$-module $P$ is called projective  
if every $V$-module epimorphism $f:W\rightarrow P$ splits. 
The main purpose in this section is to prove the following theorem. 

\begin{uthm}
Let $T$ be a simple regular VOA with a nonsingular invariant bilinear form and 
$\sigma\in {\rm Aut}(T)$ of order $n \in \bZ_{\geq 1}$, then $T^\sigma$ is projective as a $T^\sigma$-module. 
\end{uthm}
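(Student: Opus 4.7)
The plan is to lift the problem of splitting $g: W \twoheadrightarrow T^\sigma$ from $T^\sigma\text{-Mod}$ up to $T\text{-Mod}$, where regularity of $T$ yields a splitting automatically, and then descend using the canonical decomposition of $T$ as a $T^\sigma$-module. Concretely, I would use the simple-current decomposition (the first result of this section) $T = \bigoplus_{i=0}^{n-1} T^{(i)}$ as a $T^\sigma$-module, where $T^{(i)}$ is the $e^{2\pi\sqrt{-1}i/n}$-eigenspace of $\sigma$, each a simple current, and $T^{(0)} = T^\sigma$ is the unit of the braided tensor category of $T^\sigma$-modules.

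The steps go as follows. First, form $\bar g := g \sbxt \text{id}_T : W \boxtimes T \to T^\sigma \boxtimes T \cong T$. Since the vertex operator on $T$ exhibits $T$ as a commutative associative algebra in $T^\sigma\text{-Mod}$ (the tensor category structure exists by Lemma \ref{lem:tensor-products-work-well}), multiplication on the right $T$-factor endows $W \boxtimes T$ with a $T$-action; Lemma \ref{lem:extended-module-structure} promotes this to a genuine VOA $T$-module structure on $W \boxtimes T$, and $\bar g$ becomes a $T$-module epimorphism. Second, by regularity, every $T$-module is a direct sum of simple $T$-modules, and simplicity of $T$ as a VOA makes $T$ a simple (hence projective) $T$-module, so $\bar g$ admits a $T$-module splitting $\bar s : T \to W \boxtimes T$ with $\bar g \circ \bar s = \text{id}_T$. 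Third, let $\iota : T^\sigma \hookrightarrow T$ be the inclusion and let $p_W : W \boxtimes T = \bigoplus_i (W \boxtimes T^{(i)}) \to W \boxtimes T^{(0)} \cong W$ be the $T^\sigma$-equivariant projection onto the unit summand; set $s := p_W \circ \bar s \circ \iota$. Using the unit isomorphisms $W \boxtimes T^\sigma \cong W$ and $T^\sigma \boxtimes T^\sigma \cong T^\sigma$, the restriction of $\bar g$ to the $i=0$ summand coincides with $g$, which yields $g \circ s = p_T \circ \bar g \circ \bar s \circ \iota = p_T \circ \iota = \text{id}_{T^\sigma}$ (where $p_T : T \to T^\sigma$ is the projection onto the unit summand of $T$). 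Thus $s$ splits $g$.

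The main obstacle is ensuring that $W \boxtimes T$ genuinely carries a VOA $T$-module structure compatible with the algebra-object structure of $T$ in $T^\sigma\text{-Mod}$. This amounts to verifying the hypotheses of Lemma \ref{lem:extended-module-structure}: the intertwining operator on $W \boxtimes T$ built from the vertex operator on $T$ must factor through $(W \boxtimes T)((z))$ (non-logarithmic), restrict to the $T^\sigma$-action on the sub-intertwining coming from $T^\sigma \subset T$, and satisfy the required associativity identity on $\cO_2$. That associativity ultimately reduces to compatibility of iterated intertwining operators coming from the vertex operator on $T$ and the universal $\boxtimes$-operator, whose convergence and analytic continuation are supplied by Lemmas \ref{lem:tensor-products-work-well} and \ref{lem:generalized-associativity}.
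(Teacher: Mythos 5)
Your overall strategy --- induce the surjection up to $T$, split there using projectivity of $T$ as a module over itself, and descend to the unit summand of the simple-current decomposition $T=\bigoplus_i T^{(i)}$ --- is the same as the paper's. The formal bookkeeping in your second and third steps (that $\bar g$ restricted to the $i=0$ summand is $g$, so a $T$-module splitting of $\bar g$ yields a $V$-module splitting of $g$) is fine. But the step you flag as ``the main obstacle'' is not a routine verification; it is the entire content of the theorem, and your proposal does not supply the argument.

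The issue is that the induced object $W\boxtimes T$ (equivalently $T\boxtimes_V W$) does \emph{not} in general carry a $T$-module structure in the VOA sense: the natural action of $T$ on it is a logarithmic intertwining operator, and hypothesis (1) of Lemma \ref{lem:extended-module-structure} --- that the operator factors through $(W\boxtimes T)((z))$, with integral powers and no $\log z$ --- can genuinely fail. It is not a consequence of the convergence and associativity machinery of Lemmas \ref{lem:tensor-products-work-well} and \ref{lem:generalized-associativity}; those only produce an action of the form $\cY(t,z)=\cY^0(t,z)+\cY^1(t,z)\log z$ satisfying associativity, and nothing in your sketch forces $\cY^1=0$. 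The paper's proof is organized precisely around this failure: it first reduces (via the finite-length Lemma \ref{lem:finite-length}) to a minimal non-split extension $0\to B\to D\to V\to 0$, disposes of fractional powers by noting that non-integral weights of $T^{(j)}\boxtimes_V B$ would already force a splitting, and then spends most of its effort showing that a nonzero $\cY^1$ would be a $V$-intertwining operator of type $\binom{T\boxtimes_V B}{T,T}$, forcing $B\cong T^{(h)}$ and then $h=0$, at which point the associativity identity yields $\lambda_{n-j,m}\langle w', Y(t^i,x)Y(t^k,y)w\rangle\log y=-\lambda_{j,n-j+m}\langle w', Y(t^i,x)Y(t^k,y)w\rangle\log x$ on $\cO_2$, whose only solution is that all the constants vanish. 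Without this analysis, the claim that Lemma \ref{lem:extended-module-structure} ``promotes this to a genuine VOA $T$-module structure'' is unjustified, and the splitting argument that follows has nothing to stand on. (A secondary, fixable omission: you should first reduce the arbitrary surjection $W\to T^\sigma$ to an extension by a simple module; the logarithmic analysis is only tractable after that reduction, since it bounds the Jordan blocks of $L(0)$ on the induced module and hence the power of $\log z$ by one.)
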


To simplify the notation, we let $V = T^\sigma$ be the fixed-point subalgebra.
Viewing $T$ as a $\langle\sigma\rangle$-module, $T$ decomposes into $T=\oplus_{j=0}^{n-1} T^{(j)}$, 
where $T^{(j)}=\{v\in T\mid v^g=e^{2\pi ij/n}v \}$. By \cite{DM}, each $T^{(j)}$ is a simple $V$-module.

\subsection{Induced generalized logarithmic modules}

For each finitely generated $V$-module $D$, we consider the $V$-module:
\[ T\boxtimes_V D=\oplus_{j=0}^{n-1} T^{(j)}\boxtimes_V D. \]
We call this the induced module.  We note that it does not necessarily admit a $T$-module structure in the usual VOA sense, because the power series from the action may have fractional powers and logarithms.  

\begin{prop} \label{prop:induced-module-associativity}
There is an action of $T$ on $T\boxtimes_V D$ by an intertwining operator $\cY \in I\binom{T \boxtimes_V D}{T, T \boxtimes_V D}$ that satisfies the following compatibility conditions:
\begin{enumerate}
\item Identity: $\cY(\unit, z) = Id_{T \boxtimes_V D} z^0$
\item $L(-1)$-derivative: $\cY(L(-1)t,z) = \frac{\pd}{\pd z}\cY(t,z)$
\item Associativity: The equality $\langle w', \cY(t^1,x)\cY(t^2,y)w\rangle=\langle w',\cY(Y(t^1,x-y)t^2,y)w\rangle$ holds for all $t^1, t^2 \in T$, $w \in T \boxtimes_V D$, $w' \in (T \boxtimes_V D)'$, and $(x,y) \in \cO_2 \subset \bC^2$.
\end{enumerate}
\end{prop}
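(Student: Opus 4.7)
The plan is to construct $\cY$ by composing the canonical HLZ intertwining operator $\cY^{\boxtimes}_{T, T\boxtimes_V D}$ with the $V$-module homomorphism induced by the $T$-vertex operator, then invoking the $P(z_1,z_2)$-associativity from the Huang--Lepowsky--Zhang theory. By Lemma~\ref{lem:tensor-products-work-well}, the category of $V$-modules is a braided monoidal category under $\boxtimes$, so all the associators, universal maps, and convergence properties I need are available.

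First I observe that the vertex operator $Y_T \colon T\otimes T \to T((z))$ is itself a $V$-module intertwining operator of type $\binom{T}{T,T}$, since $V \subseteq T$ and $Y_T$ restricted to $V$-inputs is the $V$-action on $T$. By the universal property of $\boxtimes$, it factors uniquely as $Y_T = \pi_T \circ \cY^{\boxtimes}_{T,T}$ for a $V$-module homomorphism $\pi_T\colon T\boxtimes T \to T$. Using the associator $\alpha_{T,T,D}\colon T\boxtimes (T\boxtimes D) \xrightarrow{\sim} (T\boxtimes T)\boxtimes D$, I would define
\[ \cY := (\pi_T \sbxt 1_D) \circ \alpha_{T,T,D} \circ \cY^{\boxtimes}_{T,\,T\boxtimes D}. \]
Because $\cY^{\boxtimes}_{T,\,T\boxtimes D}$ is an intertwining operator of type $\binom{T\boxtimes(T\boxtimes D)}{T,\,T\boxtimes D}$ and composition with a $V$-module homomorphism preserves the intertwining property, $\cY$ is an intertwining operator of type $\binom{T\boxtimes_V D}{T,\,T\boxtimes_V D}$.

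Next I would verify the three compatibility conditions. The $L(-1)$-derivative property is automatic for any logarithmic intertwining operator. The identity axiom reduces, via the universal property applied to $Y_T(\unit,z) = \mathrm{Id}_T$, to showing that the composite $(\pi_T \sbxt 1_D) \circ \alpha_{T,T,D} \circ \cY^{\boxtimes}_{T,\,T\boxtimes D}(\unit, z)$ coincides with the identity operator on $T\boxtimes D$; this is a standard unit-constraint check in the braided monoidal structure.

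The main work is the associativity on $\cO_2$. The $P(z_1,z_2)$-associativity of HLZ intertwining operators asserts that
\[ \cY^{\boxtimes}_{T,\,T\boxtimes D}(t^1,x)\,\cY^{\boxtimes}_{T,\,T\boxtimes D}(t^2,y) w = \alpha_{T,T,D}^{-1}\!\left( \cY^{\boxtimes}_{T\boxtimes T,\,D}\!\bigl(\cY^{\boxtimes}_{T,T}(t^1, x-y)t^2,\, y\bigr) w \right) \]
as absolutely convergent expansions of matrix coefficients on $\cO_2$. Applying $\pi_T \sbxt 1_D$ after $\alpha_{T,T,D}$ and using $\pi_T\circ\cY^{\boxtimes}_{T,T} = Y_T$ converts the right-hand side into $\cY(Y_T(t^1,x-y)t^2, y) w$, which is the required identity. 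The main obstacle will be a careful invocation of Lemma~\ref{lem:generalized-associativity}, which repackages the HLZ associativity as an equality of holomorphic functions across the relevant domain, and tracking the fractional powers and logarithms introduced by the possibly non-integer $L(0)$-spectrum on $T\boxtimes D$ so that the equality holds as advertised rather than merely up to monodromy.
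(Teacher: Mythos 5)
Your construction of $\cY$ as $(\pi_T \sbxt 1_D)\circ\alpha_{T,T,D}\circ\cY^{\boxtimes}_{T,\,T\boxtimes D}$ is fine and essentially equivalent to the paper's operator, which is instead pinned down by the normalization $\langle w',\cY(t,x)\,\cY^{\boxtimes}_{T,D}(t^1,y)d\rangle=\langle w',\cY^{\boxtimes}_{T,D}(Y(t,x-y)t^1,y)d\rangle$; conditions (1) and (2) are handled adequately. The gap is in condition (3), which is the substance of the proposition. Your displayed ``$P(z_1,z_2)$-associativity'' equation does not typecheck: the coefficients of $\cY^{\boxtimes}_{T,\,T\boxtimes D}(t^2,y)w$ lie in $T\boxtimes(T\boxtimes D)$, not in $T\boxtimes D$, so $\cY^{\boxtimes}_{T,\,T\boxtimes D}(t^1,x)$ cannot be applied to them; and on the right-hand side $\cY^{\boxtimes}_{T\boxtimes T,\,D}(\,\cdot\,,y)$ takes its second argument from $D$, not from $T\boxtimes D$. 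HLZ associativity relates a product of two \emph{composable} universal intertwining operators to an iterate, whereas the product $\cY(t^1,x)\cY(t^2,y)$ you must control is a composition of two copies of the \emph{non-universal} operator $\cY$, with the module map $(\pi_T\sbxt 1_D)\circ\alpha_{T,T,D}$ sitting in the middle; no single invocation of the universal associativity handles that.

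The missing step is the one the paper supplies: since $\cY^{\boxtimes}_{T,D}$ is surjective, it suffices to prove associativity for $w$ ranging over coefficients of $\cY^{\boxtimes}_{T,D}(t^3,z)d$. One then establishes the chain
\[
\begin{aligned}
\langle w',\cY(t^1,x)\cY(t^2,y)\cY^{\boxtimes}_{T,D}(t^3,z)d\rangle
&= \langle w',\cY^{\boxtimes}_{T,D}(Y(t^1,x-z)Y(t^2,y-z)t^3,z)d\rangle \\
&= \langle w',\cY^{\boxtimes}_{T,D}(Y(Y(t^1,x-y)t^2,y-z)t^3,z)d\rangle \\
&= \langle w',\cY(Y(t^1,x-y)t^2,y)\cY^{\boxtimes}_{T,D}(t^3,z)d\rangle
\end{aligned}
\]
as holomorphic functions on $\cO_3$, where the outer steps are the defining normalization of $\cY$ promoted to three-operator identities by Lemma~\ref{lem:generalized-associativity}, and the middle step is VOA associativity for $Y$ on $T$; extracting coefficients of $z$ then yields the identity for all $w\in T\boxtimes_V D$. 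You correctly sense that Lemma~\ref{lem:generalized-associativity} is the relevant tool, but the argument as written stops short of this computation, and the equation you do write down cannot serve as its starting point.
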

\begin{proof}
Set $W^{(j)} = T^{(j)}\boxtimes_V D$ and $W=\oplus_{j=0}^{p-1}W^{(j)} = T \boxtimes_V D$.
For each $T^{(i)}$, Corollary 9.30 of \cite{HLZ6} asserts that there is a unique $V$-module intertwining operator $\cY'_i \in I\binom{W}{T^{(i)},W}$ such that on $\cO_2$, we have the following equality:
\begin{equation}
\langle w',\cY'_i(t,x)\cY_{\boxtimes_{P(y)}}(t^1,y)d\rangle
=\langle w',\cY_{T,D}^{\boxtimes}(Y(t,x-y)t^1,y)d\rangle
\end{equation}
for any $t \in T^{(i)}$, $t^1\in T$, $w'\in W'$, and $d\in D$.  We choose an isomorphism $\phi: T \boxtimes_{P(y)} D \to T \boxtimes_V D$ such that $\cY_{T,D}^{\boxtimes} = \phi \circ \cY_{\boxtimes_{P(y)}}$, and define $\cY_i$ to be the composite $\cY^{P(y)}_i \circ \phi^{-1}$ for $i \neq 0$ and set $\cY_0$ to be the $V$-action.  Then by combining these, we obtain a $V$-module intertwining operator $\cY \in I\binom{W}{T,W}$ satisfying
\begin{equation} \label{eq:normalization-of-T-action}
\langle w',\cY(t,x) \cY_{T,D}^{\boxtimes}(t^1,y)d\rangle
=\langle w',\cY_{T,D}^{\boxtimes}(Y(t,x-y)t^1,y)d\rangle
\end{equation}
on $\cO_2$ for any $t, t^1\in T$, $w'\in W'$, and $d\in D$.  Because of our choice of $\cY_0$, our operator $\cY$ satisfies the identity condition, and the $L(-1)$-derivative condition follows from the definition of intertwining operator.


We now show that $\cY(t,x)$ satisfies associativity.  By repeatedly applying Lemma \ref{lem:generalized-associativity} to the equality \eqref{eq:normalization-of-T-action}, we obtain the following chain of equalities of holomorphic functions on the open domain $\cO_3$:
\[ \begin{aligned}
\langle w', \cY(t^1,x)\cY(t^2,y)\cY_{T,D}^{\boxtimes}(t^3,z)d\rangle
&= \langle w', \cY(t^1,x)\cY_{T,D}^{\boxtimes}(Y(t^2,y-z)t^3,z)d\rangle \\
&= \langle w', \cY_{T,D}^{\boxtimes}(Y(t^1,x-z)Y(t^2,y-z)t^3,z)d\rangle \\
&= \langle w', \cY_{T,D}^{\boxtimes}(Y(Y(t^1,x-y)t^2,y-z)t^3,z)d\rangle \\
&= \langle w', \cY(Y(t^1,x-y)t^2,y)\cY_{T,D}^{\boxtimes}(t^3,z)d\rangle
 \end{aligned} \]
for any $t^1,t^2,t^3\in T$, $w'\in W'$, and $d\in D$.  We therefore obtain an equality of formal power series in $z$ whose coefficients are holomorphic functions on $\cO_2$.  By isolating coefficients, the surjectivity of $\cY_{T,D}^{\boxtimes}$ implies the pointwise equality
\begin{equation} \label{eq:associativity-for-T-action}
\langle w', \cY(t^1,x)\cY(t^2,y)w\rangle=\langle w',\cY(Y(t^1,x-y)t^2,y)w\rangle
\end{equation}
on $\cO_2$ for any $t^1,t^2\in T$, $w'\in W'$, and $w \in W$.  Absolutely uniform convergence for the series on both sides yields equality of holomorphic functions, hence associativity.

\end{proof}

\subsection{Simple current property for eigenmodules}

As an application of the induced module, we prove the following theorem. 

\begin{thm}\label{thm:simple-current}
Let $T$ be a simple non-negatively graded $C_2$-cofinite VOA with nonsingular invariant form, and let $\sigma \in \Aut (T)$ of order $n \in \bZ_{\geq 1}$.  Then for any $j \in \bZ/n\bZ$, the $T^\sigma$-module $T^{(j)}$ is a simple current.
\end{thm}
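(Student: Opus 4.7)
To show $T^{(j)}$ is a simple current, my plan is to prove the structural statement that the natural VOA-multiplication map gives an isomorphism $T^{(j)}\boxtimes_V T^{(k)}\xrightarrow{\sim} T^{(j+k)}$ for every $j,k\in\bZ/n\bZ$. In particular, the case $k=-j$ yields $T^{(j)}\boxtimes_V T^{(-j)}\cong V$, which together with the self-duality $(T^{(j)})'\cong T^{(-j)}$ coming from the nonsingular invariant form shows that $T^{(j)}$ is invertible in the fusion ring; this invertibility is the simple current property.

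The map is constructed as follows: because $\sigma$ is a VOA automorphism, $T^{(j)}\cdot T^{(k)}\subseteq T^{(j+k)}$, so $Y_T$ restricted to $T^{(j)}\otimes T^{(k)}$ defines a $V$-module intertwining operator of type $\binom{T^{(j+k)}}{T^{(j)},T^{(k)}}$, and the universal property of $\boxtimes_V$ yields a $V$-module homomorphism $\mu_{j,k}:T^{(j)}\boxtimes_V T^{(k)}\to T^{(j+k)}$. It is nonzero because $T$ is simple and surjective because $T^{(j+k)}$ is a simple $V$-module by \cite{DM}. Assembling the $\mu_{j,k}$ for fixed $k$ and varying $j$ produces a single $V$-map $\mu:T\boxtimes_V T^{(k)}\to T$. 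The next step is to verify that $\mu$ intertwines the generalized $T$-action $\cY$ from Proposition \ref{prop:induced-module-associativity} on the source with the VOA action $Y_T$ on the target; combining the associativity identity of Proposition \ref{prop:induced-module-associativity} with VOA associativity for $Y_T$, and isolating coefficients via the surjective intertwining operator $\cY_{T,T^{(k)}}^{\boxtimes}$, should yield $\mu\circ\cY(t,z) = Y_T(t,z)\circ\mu$ as holomorphic functions on the appropriate domain.

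The main obstacle I anticipate is establishing injectivity of $\mu$. Since $\ker\mu$ is $\cY$-stable by the compatibility just described, the goal is to show that no nonzero $\cY$-stable subspace exists. For a nonzero $w\in\ker\mu$ in some component $T^{(j)}\boxtimes_V T^{(k)}$, the plan is to apply $\cY(t,z)$ with $t\in T^{(-j)}$, which shifts into the degree-zero component $T^{(0)}\boxtimes_V T^{(k)}\cong V\boxtimes_V T^{(k)}\cong T^{(k)}$, where $\mu_{0,k}$ is essentially the identity map. Using the nondegeneracy of the component intertwining operators $\cY'_i$ appearing in the construction of $\cY$ (Corollary 9.30 of \cite{HLZ6}) together with the simplicity of $T^{(-j)}$ and $T^{(k)}$ as $V$-modules, one should be able to choose $t$ and extract a coefficient producing a nonzero vector in $T^{(k)}$, contradicting $w\in\ker\mu$. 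Carefully tracking the formal logarithmic expansions that can appear in $\cY$, and verifying that the transported vector indeed survives in the expected degree-zero component rather than being obstructed by lower-order correction terms, is where I expect most of the technical work to lie.
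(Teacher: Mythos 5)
Your proposal follows essentially the same route as the paper: the paper builds the multiplication map $\phi\colon T\boxtimes_V T^{(j)}\to T$ from the universal property, checks it intertwines the induced $T$-action $\cY$ with $Y_T$, and kills the kernel by using the associativity identity of Proposition \ref{prop:induced-module-associativity} to shift a putative nonzero kernel vector into the component $T^{(0)}\boxtimes_V T^{(j)}\cong T^{(j)}$, where the kernel is already known to vanish by simplicity. The one step your sketch leaves vague---why the shifted vector cannot vanish---is resolved in the paper precisely by the nonsingular invariant form: for suitable $t^1\in T^{(h-j)}$ and $t^2\in T^{(j-h)}$ the product $Y(t^1,x-y)t^2$ has the vacuum as a coefficient, so the right-hand side $\langle w',\cY(Y(t^1,x-y)t^2,y)w\rangle$ of the associativity identity cannot vanish identically while the left-hand side does, which is exactly the ``extract a coefficient'' argument you anticipated needing.
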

\begin{proof}
We consider the induced module $W=T\boxtimes_V T^{(j)}$ of $T^{(j)}$, and set $W^{(h)}=T^{(h-j)}\boxtimes_V T^{(j)}$. 
As we explained before, $T$ has an action on $W$ by $\cY$ given in \eqref{eq:normalization-of-T-action}. 
By the universal property of $P(1)$ tensor products, there is a natural epimorphism $\phi: W=T \boxtimes_V T^{(j)} \to T$ such that $\phi(\cY^{\boxtimes}_{T,T^{(j)}}(t,z)t^j)=Y(t,z)t^j$ for $t\in T$, $t^j\in T^{(j)}$. 
Then by the definition of $\cY$, we have 
\[ \phi(\cY(v,z)w)=Y(v,z)\phi(w) \]
for $v\in T$ and $w\in W$. 
In particular, $W/\Ker\phi$ is isomorphic to $T$ as a $T$-module. 
We set $K^{(h)}:=W^{(h)}\cap \Ker\phi$, so $\Ker\phi=\oplus_{h=0}^{n-1}K^{(h)}$. 
Since $T^{(0)}\cong V$ and $W^{(j)}=T^{(0)}\boxtimes_V T^{(j)}\cong T^{(j)}$ is simple, we have $K^{(j)}=0$.

Suppose $W^{(0)}$ is not simple, i.e., $K^{(0)} \neq 0$.  By restricting the actions of $T$ on the induced module $W$ to $\Ker\phi$, we have the equality
\begin{equation} \label{eq:associativity-with-kernel}
\langle w', \cY(t^1,x)\cY(t^2,y)w\rangle=\langle w',\cY(Y(t^1,x-y)t^2,y)w\rangle
\end{equation}
on $\cO_2$ for any $t^1\in T^{(h-j)},t^2\in T^{(j-h)}$ and $w\in K^{(h)} $ and $w'\in (K^{(h)})'$.  Since $K^{(j)}=0$, we have $\cY(t^2,y)w=0$ and the left hand side of \eqref{eq:associativity-with-kernel} vanishes. 
On the other hand, $Y(t^1,x-y)t^2 \in V((x-y))$, so the restriction of $\cY$ in $\cY(Y(t^1,x-y)t^2,y)w$ is given by the module structure on $K^{(h)}$.  Furthermore, by the existence of a nondegenerate invariant form on $T$, some choice of $t^1$ and $t^2$ yields the vacuum as a coefficient of $Y(t^1,x-y)t^2$.  Thus, the right side of \eqref{eq:associativity-with-kernel} does not vanish identically, yielding a contradiction.

We conclude that $T^{(n-j)}\boxtimes_V T^{(j)}\cong V$.  For any simple module $D$, the associativity of fusion (Lemma \ref{lem:tensor-products-work-well}) implies
\[ T^{(n-j)}\boxtimes (T^{(j)}\boxtimes_V D)
\cong (T^{(n-j)}\boxtimes T^{(j)})\boxtimes_V D\cong V \boxtimes_V D\cong D. \]
Fusion with $T^{(j)}$ is therefore an invertible operation on isomorphism classes of irreducible $V$-modules, i.e., $T^{(j)}$ is simple current. 
\end{proof}

\begin{cor} \label{cor:irreducible-induced-modules}
If $B$ is a simple $V$-module, then $T \boxtimes_V B$ is isomorphic to the direct sum of $r$ copies of some irreducible $\sigma^k$-twisted $T$-module $W$ for some $k \in \bZ$ and some $r|n$.  In particular, any simple $V$-module is isomorphic to a direct summand of some irreducible twisted $T$-module.
\end{cor}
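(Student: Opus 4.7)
The strategy is to place an admissible $\sigma^k$-twisted $T$-module structure on $W := T \boxtimes_V B$, decompose it into irreducibles via regularity of $T$, and then identify the decomposition as isotypic of multiplicity dividing $n$.

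First, I would apply Proposition \ref{prop:induced-module-associativity} to obtain an intertwining operator $\cY \in I\binom{W}{T, W}$ satisfying identity, $L(-1)$-derivative, and associativity on $\cO_2$. The decomposition $W = \bigoplus_{j=0}^{n-1} W^{(j)}$ with $W^{(j)} = T^{(j)} \boxtimes_V B$ is respected by $\cY$: the iteration identity \eqref{eq:normalization-of-T-action} combined with the $\bZ/n\bZ$-grading on $Y$ gives $\cY(T^{(i)},z) W^{(j)} \subseteq W^{(i+j)}\{z\}[\log z]$. By Theorem \ref{thm:simple-current}, each $T^{(j)}$ is a simple current, so each $W^{(j)}$ is a simple $V$-module. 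Semisimplicity of $L(0)$ on each $W^{(j)}$ propagates to $W$ and rules out logarithmic terms in $\cY$. Writing $\Delta_j \in \bQ/\bZ$ for the fractional part of the lowest $L(0)$-eigenvalue on $W^{(j)}$, additivity under simple-current fusion gives $\Delta_j - \Delta_0 \equiv jk/n \pmod{\bZ}$ for some $k \in \bZ$, so the $z$-powers in $\cY(v,z)$ for $v \in T^{(j)}$ lie in $-\wt(v) - 1 + \bZ + jk/n$. The associativity then upgrades to the $\sigma^k$-twisted Jacobi identity via an FHL-type rationality argument (analogous to Lemma \ref{lem:extended-module-structure}, adapted for fractional powers), giving $W$ the structure of an admissible $\sigma^k$-twisted $T$-module.

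Next, since $T$ is regular, every admissible $\sigma^k$-twisted $T$-module decomposes into irreducibles (a standard extension of the Dong-Li-Mason semisimplicity theorem to twisted modules). Write $W = \bigoplus_i M_i$ with each $M_i$ irreducible. To establish isotypicality, consider the graded automorphism $\hat\sigma \in \End(W)$ acting on $W^{(j)}$ by $e^{2\pi i j/n}$. Since $\hat\sigma \cY(v,z) \hat\sigma^{-1} = \cY(\sigma v, z)$, the map $\hat\sigma$ intertwines $W$ with its $\sigma$-twist and so permutes the irreducible summands $\{M_i\}$ within their $\sigma$-equivalence classes. Combined with the simple-current structure of the $W^{(j)}$'s, which forces every $M_i|_V$ to contain the full $\bZ/n\bZ$-orbit of $B$, a Schur-style argument then identifies all $M_i$ with a single irreducible $\sigma^k$-twisted module $M$. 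Setting $r$ equal to the multiplicity of $M$ in the decomposition and comparing $V$-module multiplicities in $W$ against those in $M$ yields $r \mid n$.

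The main obstacle is the isotypicality argument in the second step: distinct characters of the stabilizer $H \subseteq \bZ/n\bZ$ of $B$ could \textit{a priori} produce non-isomorphic irreducible $\sigma^k$-twisted $T$-modules sharing the same $V$-module restriction, so ruling this out requires carefully exploiting the $\hat\sigma$-action and the canonical construction of the $T$-action from Proposition \ref{prop:induced-module-associativity}. A secondary technical difficulty is rigorously justifying the no-logarithm conclusion and the twisted Jacobi identity in the first step, although both follow from standard techniques once $L(0)$-semisimplicity is in hand.
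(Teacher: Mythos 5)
Your first step---endowing $T \boxtimes_V B$ with a $\sigma^k$-twisted $T$-module structure via the exponents of the simple-current intertwining operators and the associativity from Proposition \ref{prop:induced-module-associativity}---matches the paper's argument. The second step, however, has a genuine gap. You invoke ``a standard extension of the Dong--Li--Mason semisimplicity theorem to twisted modules'' to decompose the admissible $\sigma^k$-twisted module into irreducibles. Complete reducibility of admissible twisted $T$-modules is \emph{not} known to follow from regularity of $T$ at this point: in this paper it is Corollary \ref{cor:g-rational}, which is deduced from the Main Theorem, which in turn relies (through Theorem \ref{thm:OPR2}) on the present corollary. Using twisted rationality here is therefore circular, and the section is explicitly advertised as ``purely genus zero'' precisely to avoid such inputs. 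Moreover, even granting a decomposition $W = \bigoplus_i M_i$, your isotypicality step is the part you yourself flag as the main obstacle, and the $\hat\sigma$-conjugation argument you sketch does not by itself rule out distinct twisted modules with the same $V$-restriction.

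The paper sidesteps both problems by building the isotypic decomposition directly rather than decomposing and then identifying summands. Let $d = n/r$ where $\frac{n}{r}\bZ$ is the stabilizer of $[B]$ under fusion with the $T^{(j)}$. One first shows the $V$-action on $B$ extends to the intermediate algebra $T^{\sigma^r} = \bigoplus_{i=0}^{r-1} T^{(id)}$: the simple-current property pins each intertwining operator $\cY^{id}\colon T^{(id)} \otimes B \to B((z))$ up to a scalar, and the associativity constants form a cocycle that can be normalized away by rescaling and extracting an $r$-th root, yielding $r$ inequivalent $T^{\sigma^r}$-structures on $B$ indexed by $r$-th roots of unity. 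One then exhibits an explicit $T^{\sigma^r}$-module isomorphism $T^{\sigma^r} \boxtimes_V B \cong B^{\oplus r}$ via maps $f_j$ whose components are twisted by powers of a primitive $r$-th root of unity, verified against the associativity constants. Finally, $T \boxtimes_V B \cong \bigl(T \boxtimes_{T^{\sigma^r}} B\bigr)^{\oplus r}$, and irreducibility of $T \boxtimes_{T^{\sigma^r}} B$ as a twisted module follows purely from the simple-current property: any nonzero twisted submodule contains some $V$-summand $T^{(k)} \boxtimes_V B$ and hence, by fusing with the simple currents $T^{(j)}$, all of them. If you want to salvage your approach, you would need to replace the appeal to twisted rationality with an argument of this genus-zero type.
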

\begin{proof}
We first show that $T \boxtimes_V B$ is a twisted module.  

Since each $T^{(j)}$ is a simple current, the $V$-submodules $T^{(j)} \boxtimes_V B$ are irreducible, so the
intertwining operator $\cY_{T^{(1)},B}^\boxtimes$ has exponents in a single coset $s + \bZ$ for some $s \in \bC$.  By associativity, the intertwining operators $\cY_{T^{(j)},B}^\boxtimes$ then have exponents in $js + \bZ$ for all $j \in \bZ$, so $s \in \frac{1}{n} \bZ$, and the axioms for $\sigma^{ns}$-twisted modules follow immediately from the associativity of intertwining operators.

To decompose the twisted $T$-module $T \boxtimes_V B$, we first note that the simple current property of the modules $\{ T^{(i)} \}_{i=0}^{n-1}$ immediately implies the following:
\begin{enumerate}
\item The set of integers $k$ such that $T^{(k)} \boxtimes_V B \cong B$ has the form $\frac{n}{r}\bZ$ for some $r|n$.
\item The isomorphism classes of simple $V$-modules $\{ T^{(k)} \boxtimes_V B\}_{k \in \bZ}$ are periodic modulo $n/r$, and pairwise non-isomorphic between residue classes.
\end{enumerate}
For convenience, we will let $d = n/r$ for the remainder of this proof.  

We now claim that the action by $V$ on $B$ extends to one by $T^{\sigma^r} \cong \bigoplus_{k=0}^{r-1} T^{(kd)}$, so that $B$ is an irreducible $T^{\sigma^r}$-module.  By the simple current property of $T^{(id)}$ proved in Theorem \ref{thm:simple-current}, the intertwining operators $\cY^{id}: T^{(id)} \otimes B \to B((z))$ can only be chosen with constant ambiguity, so an arbitrary choice of nonzero $\cY^{(id)}$ will yield nonzero constants $\lambda_i$ satisfying
\[ \langle b', \cY^d(t^d, x) \cY^{id}(t^{id},y) b \rangle = \lambda_{i+1} \langle b', \cY^{id+d}(Y(t^d, x-y)t^{id}, y) b \rangle \]
on the domain $\cO_2$.  We will show that the operators $\cY^{id}$ can be rescaled so that $\lambda_i = 1$ for all $i$.  We tautologically have $\lambda_1 = 1$, and for each $1 < i < r$, we may successively replace each $\cY^{id}$ with $\lambda_i \cY^{id}$ to send our new $\lambda_i$ to one.  Then to eliminate $\lambda_0$, we choose an $r$-th root $\xi$ of $\lambda_0$ and replace each $\cY^{id}$ with $\xi^i \cY^{id}$.  By this choice of $\{ \cY^{id} \}_{i=0}^{r-1}$, the intertwining operators satisfy associativity, and we therefore obtain a $T^{\sigma^r}$-module structure on $B$.  We note that we also obtain a $T^{\sigma^r}$-module structure on $B$ by replacing each $\cY^{id}$ with $\zeta^i \cY^{id}$ for $\zeta$ any $r$-th root of unity.  We will call such a structure the $\zeta$-structure.

Choose a $V$-module decomposition $T^{\sigma^r} \boxtimes_V B \cong B^{\oplus r}$ such that the action of $T^{(d)}$ takes the $i$th copy of $B$ to power series with coefficients in the $i+1$st copy.  We wish to show that there is a $T^{\sigma^r}$-module decomposition $T^{\sigma^r} \boxtimes_V B \cong B^{\oplus r}$.  Consider the restriction of $\cY$, as defined in the proof of Theorem \ref{thm:simple-current}, to a map $T^{\sigma^r} \otimes (T^{\sigma^r} \boxtimes_V B) \to (T^{\sigma^r} \boxtimes_V B)((z))$.  Under our choice of $V$-module decomposition of $T^{\sigma^r} \boxtimes_V B$, we obtain have an $r \times r$ matrix of $V$-module intertwining operator maps $T^{(id)} \otimes B \to B((z))$ for each $i$, and the simple current property implies the entries are constant multiplies of $\cY^{id}$.  Let $\lambda^i_{k,\ell}$ denote the constant attached to the $(k,\ell)$ entry.  By our choice of $V$-module decomposition, $\lambda^i_{k,\ell}$ is nonzero when $k \equiv \ell + i \pmod{r}$, and zero otherwise.  By Proposition \ref{prop:induced-module-associativity}, $\cY$ satisfies associativity, so these constants satisfy the compatibility
\begin{equation} \label{eq:associativity-compatibility}
\lambda^i_{\ell+i,\ell} \lambda^j_{\ell+i+j,\ell+i} = \lambda^{i+j}_{\ell+i+j,\ell}
\end{equation}
where indices are given modulo $r$.

Consider the $r$ linearly independent $V$-module maps $f_j: B \to T^{\sigma^r} \boxtimes_V B \cong B^{\oplus r}$ given by
\[ f_j(b) = \left( b, \frac{\zeta^j}{\lambda^1_{2,1}}b, \cdots, \frac{\zeta^{j(r-1)}}{\lambda^{r-1}_{r,1}}b \right) \qquad 1 \leq j \leq r, \]
where $\zeta$ is a fixed primitive $r$-th root of unity.  We claim that $f_j$ is a $T^{\sigma^r}$-module map for the $\zeta^j$-structure on $B$.  To prove this, it suffices to check that the diagram
\[ \xymatrix{ T^{(id)} \otimes B \ar[r]^{\zeta^{ij} \cY^{id}} \ar[d]_{1 \otimes f_j} & B((z)) \ar[d]^{f_j((z))} \\
T^{(id)} \otimes B^{\oplus r} \ar[r]_{\cY} & (B^{\oplus r})((z)) } \]
commutes for all $i, j$.  Since the restriction of $\cY$ is given by the matrix $\lambda^i \cY^{id}$, we can check by matching the $k$th component of $1 \otimes f_j$ with the $i+k$th component of $f_j((z))$.  This amounts to checking the identity $\zeta^{ij} \frac{\zeta^{(k-1)j}}{\lambda^{k-1}_{k,1}} = \frac{\zeta^{(i+k-1)j}}{\lambda^{i+k-1}_{i+k,1}} \lambda^i_{i+k,k}$, which holds by the compatibility in equation \eqref{eq:associativity-compatibility}.  We therefore have a $T^{\sigma^r}$-module map
\[ (f_1,\ldots,f_r): B^{\oplus r} \to T^{\sigma^r} \boxtimes_V B \]
that is a $V$-module isomorphism, so it is promoted to a $T^{\sigma^r}$-module isomorphism.

We now have the twisted $T$-module decomposition 
\[ \begin{aligned}
T \boxtimes_V B &\cong T \boxtimes_{T^{\sigma^r}} (T^{\sigma^r} \boxtimes_V B) \\
&\cong T \boxtimes_{T^{\sigma^r}} (B^{\oplus r}) \\
&\cong (T \boxtimes_{T^{\sigma^r}} B)^{\oplus r}
\end{aligned} \]
To finish the proof, it suffices to show that $T \boxtimes_{T^{\sigma^r}} B$ is irreducible as a twisted $T$-module.  As a $V$-module (and in fact as a $T^{\sigma^r}$-module), we have the decomposition $T \boxtimes_{T^{\sigma^r}} B \cong \bigoplus_{k=0}^{d-1} T^{(k)} \boxtimes_V B$.  Any nonzero twisted $T$-submodule $W$ of $T \boxtimes_{T^{\sigma^r}} B$ is also a $V$-submodule, so it decomposes into a direct sum of simple $V$-modules, and there is some $k$ such that $T^{(k)} \boxtimes_V B$ is a direct summand of $W$.  The action of $T$ on $W$ restricts to nonzero intertwining operators $T^{(j)} \otimes (T^{(k)} \boxtimes_V B) \to W\{z\}$, so the simple current property of each $T^{(j)}$ implies $W$ then contains all of the $V$-module summands of $T \boxtimes_{T^{\sigma^r}} B$.  Thus, $W$ is isomorphic to $T \boxtimes_{T^{\sigma^r}} B$, which is therefore irreducible as a twisted $T$-module.
\end{proof}

\subsection{Projectivity}

We come to the main result of this section.

\begin{thm}\label{thm:OPR2}
Let $T$ be a simple regular VOA with a nonsingular bilinear form and 
$\sigma\in {\rm Aut}(T)$ of order $n$.  Then $T^\sigma$ is projective as a $T^\sigma$-module. 
\end{thm}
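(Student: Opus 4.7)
The plan is to reduce the projectivity of $V = T^\sigma$ as a $V$-module to the projectivity of $T$ as a $T$-module, via induction along the inclusion $V \hookrightarrow T$ and descent using the $V$-module projection $\pi \colon T \to V$ onto the $T^{(0)}$-summand. Given a $V$-module epimorphism $f \colon W \to V$, the first step is to apply the functor $T \boxtimes_V (-)$ to obtain $\mathrm{Ind}(f) := \mathrm{id}_T \sbxt f \colon T \boxtimes_V W \to T \boxtimes_V V \cong T$. By Proposition \ref{prop:induced-module-associativity}, both source and target carry generalized $T$-module structures given by associative intertwining operators; unwinding the universal property encoded in equation \eqref{eq:normalization-of-T-action} shows that $\mathrm{Ind}(f)$ intertwines these actions, and surjectivity is inherited from $f$.

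The second, and crucial, step is to split $\mathrm{Ind}(f)$ at the $T$-level. By Lemma \ref{lem:finite-length}, $W$ has finite length as a $V$-module; combined with the simple current property of each $T^{(j)}$ from Theorem \ref{thm:simple-current}, this forces $T \boxtimes_V W$ to have finite $V$-module length. Extending Corollary \ref{cor:irreducible-induced-modules} from simple $W$ to finite-length $W$ by induction on composition series length --- using exactness of the simple-current functors $T^{(j)} \boxtimes_V (-)$ and semisimplicity of the category of $\sigma^k$-twisted $T$-modules, itself a consequence of the simplicity, regularity, and nonsingular invariant form hypotheses on $T$ --- yields a decomposition of $T \boxtimes_V W$ into irreducible (possibly twisted) $T$-modules, with no surviving logarithmic terms. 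By Schur's lemma, the only summands that can map nontrivially to the simple untwisted $T$-module $T$ are those isomorphic to $T$ itself, and since $\mathrm{Ind}(f)$ is surjective at least one such summand maps isomorphically onto $T$. Inverting the restriction produces a $T$-module section $s \colon T \to T \boxtimes_V W$ satisfying $\mathrm{Ind}(f) \circ s = \mathrm{id}_T$.

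Finally, descend. By functoriality of $\sbxt$, the projection $\pi$ yields a $V$-module map $\varepsilon_W := \pi \sbxt \mathrm{id}_W \colon T \boxtimes_V W \to V \boxtimes_V W \cong W$ fitting into the commutative square $f \circ \varepsilon_W = \pi \circ \mathrm{Ind}(f)$. Restricting $s$ along the inclusion $V \hookrightarrow T$ gives a $V$-module map $s|_V \colon V \to T \boxtimes_V W$, and setting $g := \varepsilon_W \circ s|_V$ one computes
\[ f \circ g = f \circ \varepsilon_W \circ s|_V = \pi \circ \mathrm{Ind}(f) \circ s|_V = \pi \circ (\mathrm{id}_T)|_V = \mathrm{id}_V, \]
so $g$ is the desired $V$-module section of $f$. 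The main obstacle is the decomposition step in the middle paragraph: upgrading Corollary \ref{cor:irreducible-induced-modules} to handle non-simple $W$ and ruling out logarithmic summands in the generalized $T$-action on $T \boxtimes_V W$, which is exactly where the full force of the hypotheses on $T$ is brought to bear.
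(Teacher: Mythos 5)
There is a genuine gap in the middle step, and it sits exactly where you locate "the main obstacle." Your decomposition of $T \boxtimes_V W$ into irreducible twisted $T$-modules rests on "semisimplicity of the category of $\sigma^k$-twisted $T$-modules," which you assert is "a consequence of the simplicity, regularity, and nonsingular invariant form hypotheses on $T$." It is not available at this point: in the paper, $\sigma$-rationality of $T$ is Corollary \ref{cor:g-rational}, deduced \emph{from} the regularity of $T^\sigma$ (the Main Theorem), which in turn depends on Theorem \ref{thm:OPR2}. Invoking it here is circular. Worse, even granting twisted rationality for \emph{ordinary} twisted modules, you could not apply it directly: the induced object $T \boxtimes_V W$ carries only the generalized action $\cY$ of Proposition \ref{prop:induced-module-associativity}, whose expansion may contain $\log z$ terms (equivalently, $L(0)$ may have nontrivial Jordan blocks), so it is not a priori a weak twisted $T$-module in the sense to which any rationality statement applies. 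Ruling out the logarithmic terms is not a technicality to be absorbed into a semisimplicity claim --- it is the entire content of the proof.

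The paper's argument confronts this head-on. It reduces to a non-split extension $0 \to B \to D \to V \to 0$ with $B$ simple, writes the induced action as $\cY = \cY^0 + \cY^1 \log z$ (the Jordan blocks have size at most $2$ because the sub and quotient are semisimple as $T$-modules), and observes that if $\cY^1 = 0$ then Lemma \ref{lem:extended-module-structure} promotes $T\boxtimes_V D \to T$ to a $T$-module surjection, which splits by projectivity of $T$ over itself; the descent to a splitting of $D \to V$ is then essentially your final paragraph. The work is in showing $\cY^1 = 0$: one first identifies $\cY^1$ as an intertwining operator of type $\binom{T\boxtimes_V B}{T,T}$, uses the simple current property to force $B \cong V$, and then extracts from the associativity identity on $\cO_2$ a relation of the form $\lambda_{n-j,m}\,\langle w', Y(t^i,x)Y(t^k,y)w\rangle \log y = -\lambda_{j,n-j+m}\,\langle w', Y(t^i,x)Y(t^k,y)w\rangle \log x$, whose incompatibility with the non-constancy of $\log y/\log x$ kills all the coefficients. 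If you want to salvage your outline, you must replace the appeal to twisted-module semisimplicity with an argument of this kind (or some other direct proof that the induced action has no logarithmic part); the rest of your reduction and descent is sound and parallels the paper.
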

\begin{proof}
Suppose the assertion is false, and there is some non-split extension of $V$.  By Lemma \ref{lem:finite-length}, any $V$-module has finite length, so it suffices to consider the case where $0\to B \to D\xrightarrow{\rho} V \to 0$ is a non-split extension of $V$ by a simple $V$-module $B$.  By simplicity of $B$ and the non-split property, all weights of $D$ are integers.  For each character $j$, $T^{(j)}\boxtimes_V B$ is a simple $V$-module and the sequence of $V$-modules
\begin{equation} \label{eq:non-split-exact-sequence}
T^{(j)}\boxtimes_V B \xrightarrow{\rho_1} T^{(j)}\boxtimes_V D \xrightarrow{\rho_2} T^{(j)}
\boxtimes_V V \to 0
\end{equation}
is exact \cite{M4}.  Taking a direct sum over $j$, we have the following commutative diagram of $V$-modules with exact rows :
\[ \xymatrix{
 T\boxtimes_V B \ar[d]^{\tau_1} \ar[r]^{\rho_1} & T\boxtimes_V D \ar[d]^{\tau_2} \ar[r]^{\rho_2} & T\boxtimes_V V \ar[d]^{\tau_3} \ar[r] & 0 \\
B \ar[r] & D  \ar[r]^{\rho} & V \ar[r] & 0 } \]
where $\tau_i$ $(i=1,2,3)$ are all projections given by the direct factor $T^\sigma = V$ of $T$.  If $T^{(j)}\boxtimes_V B$ has non-integer weights, then $\rho_2:T^{(j)}\boxtimes_V D \to T^{(j)}$ 
has to split and so does $D \to V$, contradicting our initial assumption.  Therefore, we may assume all weights of $T\boxtimes_V D$ are integers.  Since $(T\boxtimes_V D)/\rho_1(T\boxtimes_VB)$ and $T\boxtimes_V B$ are finite direct sums of irreducible $T$-modules (the former by isomorphism with $T$, and the latter by Corollary \ref{cor:irreducible-induced-modules}), the Jordan blocks for the operator $L(0)$ on $T \boxtimes_V D$ have size at most 2, and
the power of $\log z$ in the expansion of $\cY$ defined by \eqref{eq:normalization-of-T-action} is at most one.  That is, for any $t \in T$, we have 
\[ \cY(t,z)=\cY^0(t,z)+\cY^1(t,z)\log z \]
with $\cY^0(t,z), \cY^1(t,z) \in \End(T\boxtimes_V D)[[z,z^{-1}]]$.  However, we may write $\cY^1(t,z)\in {\Hom}((T\boxtimes_V D)/\rho_1(T\boxtimes_V B), \rho_1(T\boxtimes_V B))[[z,z^{-1}]]$, since $\cY^1$ vanishes on inputs in $\rho_1(T\boxtimes_V B)$ and has coefficients that take values in $\rho_1(T\boxtimes_V B) \subseteq T \boxtimes_V D$.  Since $\cY(L(-1)t,z)=\frac{d}{dz}\cY(t,z)$, we have 
$\cY^0(L(-1)t,z)=\frac{d}{dz}\cY^0(t,z)+\frac{1}{z}\cY^1(t,z)$ and 
$\cY^1(L(-1)t,z)=\frac{d}{dz}\cY^1(t,z)$. In particular, by identifying $(T\boxtimes_V D)/\rho_1(T\boxtimes_V B)$ with $T$, we may say that $\cY^1$ is a $V$-module intertwining operator of type $\binom{T \boxtimes_V B}{T, T}$.

If $\cY^1=0$, then $\cY$ is an ordinary intertwining operator with integral powers satisfying associativity, so by Lemma \ref{lem:extended-module-structure}, it yields a $T$-module structure on $T\boxtimes_V D$ and the $V$-module surjection $\rho_2: T\boxtimes_V D\to T$ is promoted to a $T$-module surjection.  Because $T$ is a projective $T$-module by our assumption that $T$ is regular, $\rho_2$ then splits.  This induces a splitting of $\rho: V\boxtimes_V D \to V$, contradicting our assumption that the extension is not split.  Thus, we may assume $\cY^1 \neq 0$.  As we mentioned earlier in this proof, $T \boxtimes_V B$ is a sum of $r$ copies of some irreducible $T$-module for some $r|n$, so the existence of a nonzero $V$-module intertwining operator of type $\binom{T \boxtimes_V B}{T,T}$ implies the irreducible $T$-module in question decomposes as a $V$-module into a direct sum of finitely many simple currents $T^{(i)}$.  We therefore have $B \cong T^{(h)}$ for some $h$, and hence we have a $V$-module isomorphism $T \boxtimes_V B \cong T$.  The exact sequence from \eqref{eq:non-split-exact-sequence} then becomes
\[ T^{(h+j)} \to T^{(j)}\boxtimes_V D \to T^{(j)} \to 0 \]
and this is non-split for all $j$, since any splitting will yield a splitting of $D \to V$ by fusion with $T^{(n-j)}$.

We may restrict $\cY$ to $\cY_{i,j}: T^{(i)} \otimes (T^{(j)} \boxtimes_V D) \to (T^{(i+j)} \boxtimes_V D)((z))[\log z]$, and correspondingly decompose $\cY^0$ and $\cY^1$, so that $\cY^1_{i,j}$ is an intertwining operator of type $\binom{T^{(h+i+j)}}{T^{(i)},T^{(j)}}$.  By the simple current property, if $h \neq 0$, then $\cY^1_{i,j} = 0$ for all $i,j$ so $\cY^1$ vanishes, contradicting our assumption.  We therefore have $h=0$ and $B \cong V$.

By Lemma \ref{lem:extended-module-structure}, the restriction of $\cY^0$ to $\rho_1(T \boxtimes_V B)$ coincides with the induced $T$-module structure map, and $\cY^0$ projects to the $T$-module structure map on $(T\boxtimes_V D)/\rho_1(T\boxtimes_V B)$.

By the associativity property in Proposition \ref{prop:induced-module-associativity}, we have
\[ \begin{aligned}
\langle w', &\cY^0(t^i,x) \cY^0(t^k,y) w \rangle + \\
&\qquad + \langle w', \cY^0(t^i,x) \cY^1(t^k,y) w \rangle \log y + \langle w', \cY^1(t^i,x) \cY^0(t^k,y) w \rangle \log x \\
&= \langle w', \cY(t^i,x) \cY(t^k,y) w \rangle \\
&= \langle w', \cY(Y(t^i,x-y)t^k, y) w \rangle \\
&= \langle w', \cY^0(Y(t^i,x-y)t^k, y) w \rangle + \langle w', \cY^1(Y(t^i,x-y)t^k, y) w \rangle \log y
\end{aligned}\]
where the term with $\cY^1(t^i,x) \cY^1(t^k,y)$ is omitted on the left side because $\cY^1$ has coefficients taking values in $\rho_1(T\boxtimes_V B)$ and vanishes on inputs in $\rho_1(T\boxtimes_V B)$.  When $w \in \rho_1(T \boxtimes_V B)$, all of the logarithmic terms vanish, and we just get the associativity identity for the module structure.  Thus, the logarithmic terms only depend on the equivalence class of $w$ in the quotient module $(T\boxtimes_V D)/\rho_1(T\boxtimes_V B)$, and we get
\[ \begin{aligned}
\langle w', &\cY^0(t^i,x) \cY^1(t^k,y) w \rangle \log y + \langle w', \cY^1(t^i,x) \cY^0(t^k,y) w \rangle \log x \\
&= \langle w', \cY^1(Y(t^i,x-y)t^k, y) w \rangle \log y 
\end{aligned} \]
on $\cO_2$ for $w \in (T\boxtimes_V D)/\rho_1(T\boxtimes_V B)$ and $w' \in \rho_1(T \boxtimes_V B)'$.

Using our identification $\cY^0 = Y$ and $\cY^1_{i,j} = \lambda_{i,j} Y$, we have $\lambda_{0,j} = 0$, since $\cY_{0,j}$ was chosen to coincide with the $V$-action.  If we take $t^i \in T^{(j)}$, $t^k \in T^{(n-j)}$, and $w \in T^{(m)}$, then the right side of the equation has a factor of $\lambda_{0,m}$, and hence vanishes.  Thus, we find that $\lambda_{n-j,m} \langle w', Y(t^i,x) Y(t^k,y) w \rangle \log y = - \lambda_{j,n-j+m} \langle w', Y(t^i,x) Y(t^k,y) w \rangle \log x$ for all $(x,y) \in \cO_2$.  Since the ratio of $\log y$ and $\log x$ is not constant on $\cO_2$, both sides must vanish, and $\lambda_{j,m} = 0$ for all $(j,m)$.  We conclude that $\cY^1$ vanishes, contradicting our earlier reduction.

Thus, $V$ is projective as a $V$-module.
\end{proof}

\section{Regularity}
In order to prove the main theorem, we will combine a result in \cite{M04} 
about the modular invariance of characters with a method treating genus-one two-point correlation functions 
introduced in \cite{HD}. We first recall the result about the modular invariance property.

\subsection{Modular invariance}

For a $V$-module $U$, we consider the trace function 
\[ Z_U(v,\tau):=x^{\wt(v)}\Tr_{U} Y^U(v,x)q_{\tau}^{L(0)-c/24} \]
for $\tau$ in the complex upper half-plane $\cH$ and $v\in V_{\wt(v)}$, where $q_z$ denotes $e^{2\pi iz}$ for any suitable input $z$. We note that $Z_U(v,\tau)$ does not depend on $x$.

\begin{thm} \label{thm:modular}
If $V$ is a simple $C_2$-cofinite vertex operator algebra with non-negative $L(0)$-spectrum, and $U^1,\ldots,U^r$ represent the isomorphism classes of irreducible $V$-modules, then there are pseudo-trace functions $\widehat{\Tr}_{r+1}\ldots,\widehat{\Tr}_{r+d}$ and complex numbers $s_{j,k}$ for $j = 1,\ldots,r$ and $k = 1, \ldots, r+d$, such that the following equality of functions on $V \times \cH$ holds:
\[ \tau^{-\wt [v]} Z_{U^j}(v,-1/\tau) = \sum_{k=1}^r s_{j,k} Z_{U^k}(v,\tau) + \sum_{k = r+1}^{r+d} s_{j,k} \widehat{\Tr}_k o(v) q^{L(0)-c/24} \]
Here, $v\in V_{[\wt[v]]}$, where the bracket weight $V_{[m]}$ is defined by Zhu's modified VOA structure $(V,Y[,z],\unit,\widetilde{\omega})$ on $V$ given by $Y[v,z]:=Y(v,q_z-1)q^{\wt(v)}$ and conformal element $\widetilde{\omega} = (2\pi i)^2(\omega-\frac{c}{24}\unit)$. 
\end{thm}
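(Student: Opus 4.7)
The plan is to invoke the modular invariance theorem of \cite{M04} essentially directly, since the statement here is a specialization of that result to one-point functions. The only mild adjustment required is to confirm that the present hypotheses---simplicity, $C_2$-cofiniteness, and non-negative $L(0)$-spectrum---suffice in place of Miyamoto's occasionally stronger CFT-type assumption.

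At the level of strategy, the proof in \cite{M04} proceeds as follows. For each $n \geq 0$, the algebra $A_n(V)$ is finite dimensional by \cite{M04} Theorem 2.5, which needs only $C_2$-cofiniteness and non-negative grading. For each indecomposable finite-dimensional projective $A_n(V)$-module $P$, one builds an induced ``generalized Verma'' $V$-module and uses a symmetric linear functional on $\End_{A_n(V)}(P)$ to define a pseudo-trace function $\widehat{\Tr}_P o(v) q^{L(0)-c/24}$. By $C_2$-cofiniteness together with Zhu's recursion formula, every such (pseudo-)trace function is annihilated by a common modular differential operator whose holomorphic solution space on $\cH$ is finite dimensional and stable under the $SL_2(\bZ)$-action sending $f(\tau)$ to $\tau^{-\wt[v]} f(-1/\tau)$.

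The key step---and where I expect the main obstacle to lie---is showing that ordinary traces on irreducible modules, together with pseudo-traces on indecomposables, span the full solution space. Miyamoto handles this through a careful dimension count relating the solution space to the representation theory of $A_n(V)$ for sufficiently large $n$, combined with the construction of enough linearly independent pseudo-trace functions attached to generalized Verma modules. Granting this spanning property, the desired $S$-transformation identity follows by expanding $\tau^{-\wt[v]} Z_{U^j}(v,-1/\tau)$ in the (pseudo-)trace basis, with the coefficients $s_{j,k}$ read off as the matrix entries of $S$. To adapt the result to our weaker hypotheses, one verifies that the only place CFT-type enters the original proof is in establishing finite dimensionality of the algebras $A_n(V)$, which is already covered under our assumptions by \cite{M04} Theorem 2.5; no other ingredient of Miyamoto's proof makes essential use of $\Ker L(0) = \bC\unit$.
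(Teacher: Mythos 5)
Your overall strategy coincides with the paper's: both proofs amount to citing Theorem 5.5 of \cite{M04} and checking that the present hypotheses suffice. But your verification misses the one place where real work is needed. The issue is not the CFT-type assumption at all; it is that the proof in \cite{M04} contains an erroneous claim at the start of the pseudo-trace section (pointed out in Remark 3.5.2 of \cite{AN}): namely, that for a non-negatively graded $C_2$-cofinite VOA every irreducible module $W^i$ has $W^i(m)\neq 0$ for all sufficiently large $m$. This is false in general --- the one-dimensional module $\bC\unit$ for the $C_2$-cofinite $\cW_{2,3}$ VOA of central charge zero is a counterexample --- so one cannot simply assert, as you do, that ``no other ingredient of Miyamoto's proof makes essential use'' of the stronger hypotheses and stop there.

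The paper repairs this as follows: if $W^i(m)=0$ for infinitely many $m$, then the Virasoro action on $W^i$ is trivial, so $W^i$ equals its lowest-weight space and is a finite-dimensional irreducible $A_0(V)$-module; its annihilator is an ideal of finite codimension, the quotient is a finite-dimensional commutative ring with trivial weights, the module is one-dimensional, and the corresponding trace function is a constant --- so the statement of the theorem survives (with a corrected dimension formula for $\cC_1(V)$). Moreover, under the simplicity hypothesis of the theorem, a simple infinite-dimensional $V$ cannot have such a finite-dimensional irreducible module at all, so the problematic case is vacuous in the setting actually used. Your proposal needs this patch (or the observation that simplicity excludes the bad case) to be complete; without it, the appeal to \cite{M04} rests on a step that is known to be wrong as stated.
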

\begin{proof}
This is immediate from Theorem 5.5 of \cite{M04}: $Z_{U^j}$ is an element of the one-point genus one function space $\cC_1(V)$, which is $SL_2(\bZ)$-invariant and spanned by traces on irreducible modules and pseudo-traces.  However, in the proof of this theorem, there is an erroneous claim in the beginning of the pseudo-trace section (pointed out in Remark 3.5.2 of \cite{AN}), namely that if $V$ is a non-negatively graded $C_2$-cofinite VOA and $W^i$ is an irreducible $V$-module, then the homogeneous subspace $W^i(m)$ is nonzero for all sufficiently large $m$.  A counterexample is given by the one-dimensional module $\bC \unit$ for the $\cW_{2,3}$ VOA of central charge zero.  This VOA was shown to be $C_2$-cofinite in \cite{AM}.  We will now show that if the erroneous claim does not hold for an irreducible module $W^i$, then the corresponding trace function is constant, and the statement of the theorem still holds.

Suppose there is an infinite set of positive integers $m$ such that $W^i(m) = 0$.  Since any lowest-weight representation of the Virasoro algebra generated by a single vector has at most finitely many gaps, this implies the Virasoro action is trivial.  Therefore, the module $W^i$ is equal to its lowest-weight space $W^i(0)$, and $W^i$ is then an irreducible $A_0(V)$-module, hence finite dimensional.  Let $\Ann(W^i) = \{ v \in V | o(v)W^i = 0 \}$ be the annihilator of $W^i$, where we set $o(v) = v_{\wt(v) -1}$ for homogeneous elements $v \in V$ and extend it to a linear map $o: V \to \End_{Vect}(W^i)$.  Because $W^i$ is finite dimensional, $\dim (V/\Ann(W^i)) < \infty$.  Furthermore, $\Ann(W^i)$ is an ideal, because for $u \in V$, $v \in \Ann(W^i)$, and $w \in W^i$, the Borcherds identity \cite{B} presents any $(u_m v)_n w$ as a linear combination of terms of the form $u_{m-i} (v_{n+i} w)$ and $v_{m+n-i} (u_i w)$ with $i \geq 0$, all of which vanish.  Traces then arise from the induced action of the quotient vertex operator algebra $V/\Ann(W^i)$.  We note that in the case of interest to us, where $V$ is simple and infinite dimensional, we obtain a contradiction, i.e., a finite dimensional irreducible module $W^i$ cannot exist.  However, in general, this quotient is a finite dimensional commutative ring with trivial weights, so an irreducible module is necessarily one dimensional, and the traces are just constants.  In fact, a short computation shows that the space of these constant trace functions is isomorphic to $V/P(V) \cong V_0/(V_0 \cap P(V))$, where $P(V)$ is the subspace of $V$ spanned by $\{u_k v | u,v \in V, k \geq 0\}$.
\end{proof}

\begin{rem}
From the previous argument, it is straightforward to see that formula for the dimension of $\cC_1(V)$ in Theorem 5.5 of \cite{M04} needs to be revised from $\dim A_n(V)/[A_n(V),A_n(V)] - \dim A_{n-1}(V)/[A_{n-1}(V),A_{n-1}(V)]$ to
\[ \dim A_n(V)/[A_n(V),A_n(V)] - \dim A_{n-1}(V)/[A_{n-1}(V),A_{n-1}(V)] + \dim V/P(V)\]
to account for trace functions on one-dimensional irreducible modules.
\end{rem}

We may extend trace functions to virtual modules with complex coefficients: for a linear combination $U=\sum_{i\in D} \xi_iU^i$ of simple modules $U^i$, we will abuse the notation $Z_U(v,\tau)$ and $s_{U,j}$ to denote $\sum_{i\in D} \xi_iZ_{U^i}(v,\tau)$ and $\sum_{i\in D} \xi_i s_{i,j}$, respectively. 

We won't explain the precise definition of pseudo-trace functions, because we are concerned with only the cases 
where pseudo-trace functions don't appear. 

\begin{defn}
If $\tau^{-\wt[v]}Z_{U}(v,-1/\tau)$ is a linear combination of trace functions on $V$-modules for 
all $v\in V$, then we say that the virtual module $U$ is of type NPT. 
\end{defn}

One point about pseudo-trace functions that we would like to mention is that for any pseudo-trace function $\widehat{\Tr}_k$, we have $\widehat{\Tr}_k o(\unit)q^{L(0)}=0$.

\subsection{Two-point genus one functions}

We introduce a family of composition-invertible power series defined in \cite{HD}, together with their actions on modules for the Virasoro algebra.  We then extend some of Huang's results to the case of intertwining operators with logarithmic terms.

Let $A_j, j \in \bN$ be the complex numbers defined by 
\[ \frac{1}{2\pi i}(e^{2\pi i y}-1) = \left( \exp \left( - \sum_{j \in \bN} A_j y^{j+1} \frac{\partial}{\partial y} \right) \right) y. \]
For any $V$-module $W$, we denote the operator $\sum_{j \in \bN} a_j L(j)$ on $W$ by $L_+(A)$.  For convenience, we introduce the operator
 \[ \cU(x) = (2 \pi i x)^{L(0)}e^{-L_+(A)} \in (\End W)\{x\} \]
 where $(2 \pi i)^{L(0)} = e^{(\log 2\pi + i \pi/2)L(0)}$.  We have $\cU(1) = (2\pi i)^{L(0)}e^{-L_+(A)}$ and $\cU(q_x) = q_x^{L(0)} \cU(1)$ as concrete examples.  As Huang has shown in [4, Proposition 1.2], $\cU(1)\omega = (2\pi i)^2(\omega - \frac{c}{24}\unit)$ and 
\[ \cU(1) \cY(w,x) \cU(1)^{-1} = \cY(\cU(q_x)w,q_x-1) \]
for an intertwining operator $\cY$.  Namely, we have 
\[ \cU(1) \cY(w,x)\cU(1)^{-1} = \cY[\cU(1)w,x] \]
where $\cY[w,x] = \cY(w,q_x-1)q_x^{\wt(w)}$ for homogeneous elements $w$ defines the modified intertwining operator for the modified VOA $(V, Y[,], \unit, \tilde{\omega} = \cU(1)\omega)$ in \cite{Zh}.  We see that
\[ \cU(1): \bigoplus_{U \in \textrm{mod}(V)} U\{x\} \to \bigoplus_{U \in \textrm{mod}(V)} U\{x\} \]
(where we take a sum over isomorphism classes to avoid set-theoretic problems) gives an isomorphism between $(V,Y, \unit,\omega)$-intertwining operators by $W$ and $(V, Y[,], \unit, \tilde{\omega})$-intertwining operators by $W$ via $\cU(1): W\{x\} \to W\{x\}$.

\begin{lem} \label{lem:u1-weight-shift}
For $u \in W_n$, $\cU(1)w \in W_{[n]}$.
\end{lem}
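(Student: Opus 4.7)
The plan is to extract the statement from the conjugation identity $\cU(1)\cY(w,x)\cU(1)^{-1} = \cY[\cU(1)w,x]$ quoted in the paragraph preceding the lemma (Huang's identity), by specializing to the conformal vector.

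First I would set $\cY = Y$ and $w = \omega$ in the quoted identity. Combined with the already-stated equality $\cU(1)\omega = (2\pi i)^2(\omega - \frac{c}{24}\unit) = \widetilde{\omega}$, this yields
\[
\cU(1)Y(\omega,x)\cU(1)^{-1} \;=\; Y[\widetilde{\omega},x].
\]
The left-hand side expands as $\sum_{n\in\bZ}\bigl(\cU(1)L(n)\cU(1)^{-1}\bigr)x^{-n-2}$, while the right-hand side is by definition $\sum_{n\in\bZ}L[n]\,x^{-n-2}$, where $L[n]$ denotes the mode of the bracket conformal vector $\widetilde{\omega}$ in the modified VOA structure, and it is exactly these modes whose zeroth term defines the bracket grading $W = \bigoplus_n W_{[n]}$. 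Matching coefficients of $x^{-n-2}$ gives the operator identity $\cU(1)L(n)\cU(1)^{-1} = L[n]$ for every $n$, and in particular for $n=0$.

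Finally, for $w \in W_n$ one has $L(0)w = nw$, so
\[
L[0]\bigl(\cU(1)w\bigr) \;=\; \cU(1)L(0)\cU(1)^{-1}\,\cU(1)w \;=\; \cU(1)L(0)w \;=\; n\,\cU(1)w,
\]
which is exactly the assertion $\cU(1)w \in W_{[n]}$. The only point that requires care is the identification of the Virasoro modes $L[n]$ appearing in the definition of $W_{[n]}$ with the modes read off from $Y[\widetilde{\omega},x]$; this is immediate from the bracket VOA structure $(V,Y[-,-],\unit,\widetilde{\omega})$ recalled in the preceding paragraph, so there is no substantive obstacle. Beyond this bookkeeping, the proof is a one-line consequence of Huang's conjugation formula.
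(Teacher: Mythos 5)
Your proof is correct and is essentially the paper's own argument: the paper likewise extracts the $x^{-2}$-coefficient of $\cU(1)Y(\omega,x)u = Y[\cU(1)\omega,x]\cU(1)u$ to get $\wt(u)\cU(1)u = L[0]\cU(1)u$, which is just your conjugation identity $\cU(1)L(0)\cU(1)^{-1}=L[0]$ applied to $u$.
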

\begin{proof}
The $x^{-2}$-coefficients of both sides of $\cU(1)Y(\omega,x)u = Y[\cU(1)\omega,x]\cU(1)u$ yield $\wt(u)\cU(1)u = L[0]\cU(1)u$.
\end{proof}

The following equations are proved for non-logarithmic intertwining operators in \cite{HD}.  We will prove the same statement for every intertwining operator even if it has logarithmic terms.

\begin{lem} \label{lem:generalization-of-Huang}
Let $B, U, T, P$ be $V$-modules.  For $\cY \in I\binom{T}{B,U}$, $\cY_1 \in I\binom{P}{T,P}$, $u \in U$, and $b \in B$, we have the following formal equalities:
\begin{enumerate}
\item $e^{\tau L(0)}\cY(b,z)u = \cY(e^{\tau L(0)}b, e^\tau z)e^{\tau L(0)}u$ in $T \{z,e^\tau\}[y,\tau]$, where $y = \log z$.
\item $q^{L(0)}\cY(\cU(q_y)b,q_y) = \cY(\cU(q_yq)b, q_y q)q^{L(0)}$ in $(\Hom (U,T)) \{q_y, q \}[y,\tau]$
\item $\cY_1(\cY(\cU(q_y)b,q_y-q_x)\cU(q_x)u,q_x) = \cY_1(\cU(q_x)\cY(b, y-x)u, q_x)$ in $(\End P) \{q_x, y-x \}[x, \log(y-x)]$
\end{enumerate}
\end{lem}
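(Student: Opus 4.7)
The strategy is to establish part (1) by a formal ODE uniqueness argument, and then to derive parts (2) and (3) from (1) together with the $\cU(1)$-conjugation identity $\cU(1)\cY(w, x)\cU(1)^{-1} = \cY(\cU(q_x) w, q_x - 1)$ stated just above the lemma and the factorization $\cU(x) = x^{L(0)}\cU(1)$.

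For (1), the key input is the $L(0)$-commutator
\[ [L(0), \cY(b, z)] = \cY(L(0) b, z) + z \frac{\partial}{\partial z}\cY(b, z), \]
which is valid for logarithmic intertwining operators; here $z\partial_z$ sends $z^{-m-1}(\log z)^j$ to $-(m+1) z^{-m-1}(\log z)^j + j z^{-m-1}(\log z)^{j-1}$, and the identity is the standard consequence of the Jacobi identity applied to the conformal vector. Set $F(\tau) = e^{\tau L(0)}\cY(b, z) u$ and $G(\tau) = \cY(e^{\tau L(0)} b, e^\tau z) e^{\tau L(0)} u$. After expanding $\log(e^\tau z) = \tau + y$ and interpreting $e^{\tau L(0)}$ through the Jordan decomposition $L(0) = L(0)_s + L(0)_n$ (the nilpotent part contributing a polynomial in $\tau$), both $F$ and $G$ live in $T\{z, e^\tau\}[y, \tau]$ and agree at $\tau = 0$. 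The equality $F'(\tau) = L(0) F(\tau)$ is immediate. A product-rule computation on $G$, using $\frac{d}{d\tau}(e^\tau z) = e^\tau z$ together with the $L(0)$-commutator evaluated at $(e^{\tau L(0)} b, e^\tau z)$, yields $G'(\tau) = L(0) G(\tau)$. Uniqueness of solutions to first-order formal ODEs in $\tau$ then forces $F = G$.

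For (2), substitute $e^\tau = q$, $z = q_y$, $b \mapsto \cU(q_y) b$ into (1) and absorb the resulting $q^{L(0)}$ via $q^{L(0)} \cU(q_y) = (q q_y)^{L(0)} \cU(1) = \cU(q q_y)$, which follows immediately from $\cU(x) = x^{L(0)}\cU(1)$. For (3), it suffices to show the two inputs of $\cY_1(-, q_x)$ agree. Starting from the right side,
\[ \cU(q_x) \cY(b, y-x) u = q_x^{L(0)} \cU(1) \cY(b, y-x) u = q_x^{L(0)} \cY(\cU(q_{y-x}) b, q_{y-x} - 1) \cU(1) u, \]
by the $\cU(1)$-conjugation identity. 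Apply (1) with $\tau = 2\pi i x$ to move $q_x^{L(0)}$ past the outer $\cY$; the simplifications $q_x^{L(0)} \cU(q_{y-x}) = \cU(q_y)$ and $q_x(q_{y-x} - 1) = q_y - q_x$ then recover the left side.

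The main obstacle is justifying that each formal operation --- conjugation by $e^{\tau L(0)}$, substitution of $e^\tau z$ into series with logarithms and fractional powers, and the ODE-uniqueness step --- remains valid when $\cY$ carries logarithmic terms and $L(0)$ acts with nontrivial generalized eigenspaces. Huang's original arguments in \cite{HD} sidestep these issues by restricting to non-logarithmic operators, so the extension reduces to careful bookkeeping: identifying the precise ring of formal series in which each identity lives, tracking how $\log(e^\tau z) = \tau + y$ reshuffles logarithmic and polynomial contributions, and confirming that the Jordan decomposition of $L(0)$ is compatible with every substitution.
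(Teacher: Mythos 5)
Your proposal is correct, and parts (2) and (3) are derived exactly as in the paper: (2) by the substitution $e^\tau\mapsto q$, $z\mapsto q_y$, $b\mapsto\cU(q_y)b$ in (1) together with $q^{L(0)}\cU(q_y)=\cU(qq_y)$, and (3) by the chain $\cU(q_x)\cY(b,y-x)=q_x^{L(0)}\cY(\cU(q_{y-x})b,q_{y-x}-1)\cU(1)=\cY(\cU(q_y)b,q_y-q_x)\cU(q_x)$. Where you genuinely diverge is in part (1). The paper works at the level of coefficients: it writes $\cY(b,z)=\sum_{h,n}b_{(h,n)}z^{-n-1}y^h$, derives from the $L(-1)$-derivative property the relation $[L(0),b_{(h,n)}]u=(-n-1)b_{(h,n)}u+(h+1)b_{(h+1,n)}u+(L(0)b)_{(h,n)}u$ (which is precisely your commutator $[L(0),\cY(b,z)]=\cY(L(0)b,z)+z\partial_z\cY(b,z)$ read off coefficientwise), then computes $L(0)^m(b_{(h,n)}u)$ by a binomial expansion and resums the exponential series after a reindexing to recover $\cY(e^{\tau L(0)}b,e^\tau z)e^{\tau L(0)}u$ directly. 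You replace that resummation with a first-order formal ODE/uniqueness argument from the same commutator; this is cleaner and avoids the double-sum bookkeeping, at the cost of having to justify uniqueness in the ring $T\{z,e^\tau\}[y,\tau]$, which is not a formal power series ring in $\tau$ (the variable $e^\tau$ carries arbitrary complex exponents). That justification is routine --- e.g., one checks that $e^{-\tau L(0)}G(\tau)$ has vanishing $\tau$-derivative and that any element $\sum_\lambda (e^\tau)^\lambda P_\lambda(\tau)$ with $\lambda P_\lambda+P_\lambda'=0$ must have $P_\lambda=0$ for $\lambda\neq 0$ and $P_0$ constant --- but you should state it explicitly rather than leave it to the closing remarks, since it is the one place where the logarithmic/non-semisimple setting could conceivably interfere. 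With that detail filled in, your argument is a valid and arguably more conceptual substitute for the paper's computation.
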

\begin{proof}
Set $\cY(b,z) = \sum_{h=0}^K \sum_{n \in \bC} b_{(h,n)} z^{-n-1} y^h$.  From the $L(-1)$ derivative formula $\cY(L(-1)b,z) = \frac{d}{dz}\cY(b,z)$, we have
\[ \begin{aligned}
{[}L(0),b_{(h,n)}] u &= (L(-1)b)_{(h,n+1)} + (L(0)b)_{(h,n)} \\
&= (-n-1)b_{(h,n)} + (h+1)b_{(h+1,n)} + (L(0)b)_{(h,n)}.
\end{aligned} \]
Using the notation $(\alpha \otimes \beta)b_{(h,n)}u = (\alpha b)_{(h,n)} \beta u$, and $\Omega = L(0) \otimes 1 + 1 \otimes L(0)$, we have
\[ \begin{aligned}
L(0)(b_{(h,n)}u) &= (-n-1 + \Omega)b_{(h,n)}u + (h+1)b_{(h+1,n)}u \\
L(0)^m(b_{(h,n)}u) &= \sum_{j=0}^m \binom{m}{j} (-n-1 + \Omega)^{m-j} (h+1)\cdots(h+j) b_{(h+j,n)}u 
\end{aligned} \]
for $m \geq 1$, where $(h+1)\cdots (h+j) = 1$ for $j=0$.  Therefore, we obtain
\[ \begin{aligned}
e^{\tau L(0)} & \cY(b,z)u = \sum_{n \in \bC} e^{\tau L(0)} \left( \sum_{h=0}^K b_{(h,n)} uy^h \right) z^{-n-1} \\
&= \sum_{n \in \bC} \sum_{m=0}^\infty \sum_{h=0}^\infty \frac{L(0)^m \tau^m}{m!} (b_{(h,n)}uy^h z^{-n-1}) \\
&= \sum_{n \in \bC} \sum_{m=0}^\infty \sum_{h=0}^\infty \sum_{j=0}^m \frac{\tau^m}{m!} \binom{m}{j} (\Omega -n - 1)^{m-j}(h+1)\cdots(h+j) b_{(h+j,n)}uy^h z^{-n-1}.
\end{aligned}\]
By replacing $h+j$ and $m=j$ by $k$ and $i$, respectively, $e^{\tau L(0)} \cY(b,z)u$ equals
\[ \begin{aligned}
\sum_{n \in \bC} \sum_{k=0}^\infty &\sum_{i=0}^\infty \sum_{j=0}^k \frac{\tau^i (\Omega -n - 1)^i}{i!} \frac{1}{j!} (k-j+1)\cdots(k) \tau^j y^{k-j} b_{(k,n)}u z^{-n-1} \\
&= \sum_{n \in \bC} \sum_{k=0}^\infty e^{\tau(\Omega -n - 1)} b_{(k,n)}u (y+\tau)^k z^{-n-1} \\
&= \sum_{n \in \bC} \sum_{k=0}^\infty (e^{\tau L(0)} b)_{(k,n)} (e^{\tau L(0)}u) (y+\tau)^k e^{\tau(-n-1)} z^{-n-1} \\
&= \cY(e^{\tau L(0)} b, e^\tau z) (e^{\tau L(0)} u)
\end{aligned} \]
so the first equation is proved.  Replacing $\tau$ with $2 \pi i \tau$, $y$ with $2 \pi i y$, and $b$ with $\cU(q_y)b$ yields the second equation.  The third equation follows from the second equation, together with the following:
\[ \begin{aligned}
\cU(q_x)\cY(b,y-x) &= q_x^{L(0)}\cU(1) \cY(b,y-x) \\
&= q_x^{L(0)} \cY(\cU(q_{y-x})b, q_{y-x} - 1)\cU(1) \\
&= \cY(\cU(q_{y-x}q_x)b, q_x(q_{y-x}-1))q_x^{L(0)}\cU(1) \\
&= \cY(\cU(q_y)b, q_y - q_x)\cU(q_x)
\end{aligned} \]
\end{proof}

We now consider genus-one two-point correlation functions introduced by Huang in Proposition 1.4 of \cite{HD} (see also \cite{MS}).  From now on, to simplify the notation, we assume that $W$ is a simple $V$-module with integer weights.  We then have a canonical homomorphism $\pi_{W}:W\boxtimes W' \to V\, (\cong V')$
satisfying 
\[ \lim_{z\to 0}z^{\wt(w)+\wt(w')}\pi_{W}(\cY^{\boxtimes}_{W,W'}(w,z)w')=\langle w,w'\rangle\unit \]
for $w\in W$ and $w'\in W'$, where $\langle\cdot,\cdot \rangle$ is the contragradient pairing on $W \times W'$.  We set $\cY_W:=\pi_W\cY^{\boxtimes}_{W,W'}$, and warn the reader against forgetting about the $\pi_W$. 

\begin{defn}
Let $W$ be a simple $V$-module with integer weights, and let $U$ be a simple $V$-module.  We define the formal power series
\[ Z_U(w,w',x-y,\tau) := \Tr_U Y(\cU(q_y) \cY_W(w,x-y)w',q_y)q^{L(0)-c/24}. \]
This is an element of $\bC\{q_y,q\}((x-y))[y,\tau]$
\end{defn}

In \cite{HD}, Huang showed that this power series is absolutely uniformly convergent on compact sets in the region $\cO_4 = \{ (x,y,\tau) \in \bC^2 \times \cH | |q_{x-y}| > 1 > |q_{x-y}-1| > 0 \}$.  While Huang shows in Proposition 5.4 of \textit{loc. cit.} that this function satisfies a differential equation with regular singularities at specified loci, and therefore can be analytically continued to a multivalued function on a dense open subset of $\bC^2 \times \cH$, we will restrict our view to $\cO_4$ (which is an infinite disjoint union of contractible open sets).  This allows us to avoid the subtleties of showing that multivalued functions are equal.  By the third claim of Lemma \ref{lem:generalization-of-Huang}, this function is equal to 
\[ \Tr_U Y(\cY(\cU(q_y q_{x-y})w, q_y(q_{x-y}-1))\cU(q_y)w',q_y)q^{L(0)-c/24} \]
and we will view it as a formal power series in $q_{x-y}$ whose coefficients are analytic functions of $\tau \in \cH$.  A priori, it is also a formal power series in $q_y$, but by the following lemma, we can ignore that variable.

\begin{lem}
$\Tr_U Y(\cU(q_y) \cY_W(w,x-y)w',q_y)q^{L(0)-c/24}$ does not depend on $q_y$.
\end{lem}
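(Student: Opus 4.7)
The plan is to use the standard $L(0)$-conjugation property of vertex operators combined with the cyclicity of the trace. Factor $\cU(q_y) = q_y^{L(0)}\cU(1)$ and set $b = \cU(1)\cY_W(w,x-y)w'$, which is a formal series in $x-y$ whose coefficients carry no $q_y$-dependence. Applying the standard conjugation identity $z^{L(0)}Y(u,x)z^{-L(0)} = Y(z^{L(0)}u, zx)$ at $x = 1$ and $z = q_y$, coefficient-wise in the series $b$, gives
\[ Y(\cU(q_y)\cY_W(w,x-y)w', q_y) = q_y^{L(0)}\,Y(b,1)\,q_y^{-L(0)}. \]
Substituting into the definition of $Z_U$ and applying cyclicity of the trace yields
\[ Z_U(w,w',x-y,\tau) = \Tr_U Y(b,1)\,q_y^{-L(0)}\,q^{L(0)-c/24}\,q_y^{L(0)}. \]
Since $q_y^{\pm L(0)}$ and $q^{L(0)-c/24}$ are all functions of the single operator $L(0)$, they mutually commute, so the two $q_y^{\pm L(0)}$ factors cancel, leaving $\Tr_U Y(b,1)\,q^{L(0)-c/24}$, which manifestly has no $q_y$-dependence.

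The only real subtlety is justifying cyclicity of the trace on the infinite-dimensional module $U$ and making sense of the formal operator $q_y^{L(0)}$. Under the standing regularity hypothesis $U$ is a simple ordinary $V$-module, so $L(0)$ acts semisimply with finite-dimensional eigenspaces; the trace then decomposes as a sum over those eigenspaces, on which cyclicity reduces to its finite-dimensional form and $q_y^{L(0)}$ is diagonal. Equivalently, one could invoke Lemma \ref{lem:generalization-of-Huang}(2) with $\cY = Y$ directly to obtain the invariance $F(q_y,q) = F(q_y q, q)$ of the trace function viewed as a formal series in $(q_y,q)$; extracting coefficients then forces independence of $q_y$, using that the $q$-exponents on $U$ are bounded below.
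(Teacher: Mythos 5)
Your proof is correct and is essentially the paper's argument in different clothing: the paper writes $\cU(q_y)=q_y^{L(0)}\cU(1)$ and observes directly that the grade-preserving modes $(v^{(r)})_{r-1}$ pick up the factor $q_y^{r-(r-1)-1}=1$, which is exactly your statement that conjugation by $q_y^{L(0)}$ is invisible to the graded trace. (One small quibble: the semisimplicity of $L(0)$ on $U$ should be attributed not to a ``regularity hypothesis'' on $V=T^\sigma$ --- which is what is being proved --- but to the fact that $U$ is simple with finite-dimensional generalized $L(0)$-eigenspaces, so the nilpotent part of $L(0)$ is a module endomorphism and hence zero; in any case only finite-dimensionality of the generalized eigenspaces is needed for the blockwise cyclicity.)
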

\begin{proof}
Set $\cU(1)v = \sum_r v^{(r)}$, where $v^{(r)} \in V_r$.  Then the grade-preserving part of
\[ Y(\cU(q_y)v,q_y) = \sum_m q_y^{L(0)} \left( \sum v^{(r)} \right)_m q_y^{-m-1} = \sum_m \sum_r q_y^r (v^{(r)})_m q_y^{-m-1} \]
is $\sum_r q_y^r v_{r-1}^{(r)} q_y^r = \sum_r v_{r-1}^{(r)}$ which does not depend on $q_y$.
\end{proof}

\begin{rem} \label{rem:modular}
As we have seen, the $\cU$ operators allow us to switch between ordinary and modified vertex operator structures.  Here, we note that $\cU(q_y)$ can be used to clarify the $S$-transformation formula.  When we consider the $S$-transformation $\tau \mapsto -1/\tau$, we have to consider the bracket weight $\wt[v]$ of $v \in V$.  However, the coefficients $w_m w'$ of $\cY_W(w,x-y)w'$ satisfy $\wt(w_m w') = \wt(w) + \wt(w') - m - 1$ while $\cU(1) w_m w' \in V_{[\wt(w) + \wt(w') - m - 1]}$.  That is, for each $m \in \bC$, the coefficient of $(x-y)^m$ in $Z_U$ is a trace function on $U$ with the grade-preserving operator of $\cU(1) w_m w' \in V_{[\wt(w) + \wt(w') - m - 1]}$.  Therefore, the $S$-transformation of $Z_U$ is
\[ \tau^{\wt(w) + \wt(w')} \Tr_U Y(\cU(q_y)\cY_W(w, \frac{x-y}{\tau})w', q_y)q_{-1/\tau}^{L(0)-c/24}. \]
Then, since $w_m w' \in V_{\wt(w) + \wt(w') - m - 1}$, Lemma \ref{lem:u1-weight-shift} and Theorem \ref{thm:modular} imply
\[ \begin{aligned}
\Tr_{U^j}\tau^{-\wt(w) - \wt(w')}&Y(\cU(q_y) \cY_W(w,\frac{x-y}{\tau})w',q_y) q_{-1/\tau}^{L(0)-c/24} \\
&= \sum_{k=1}^r s_{jk} \Tr_{U^k}Y(\cU(q_y)\cY_W(w,x-y)w',q_y) q^{L(0)-c/24} \\ 
& \, + \sum_{k=r+1}^{r+d} s_{jk} \hat{\Tr}_k Y(\cU(q_y)\cY_W(w,x-y)w',q_y) q^{L(0)-c/24}
\end{aligned} \]
by viewing both sides as formal power series in $x-y$ with coefficients in analytic functions of $\tau$.
\end{rem}

\begin{rem} \label{rem:associativity}
Since the inequalities $|q_x| > |q_y|$ and $1 > |q_{x-y} - 1|$ hold in the region $\cO_4$, the second associativity claim of Corollary 9.30 in \cite{HLZ6} implies there is an intertwining operator $\cY_1 \in I\binom{U}{W, W' \boxtimes U}$ such that
\[ \langle u', Y(\cU(q_y) \cY_W(w,x-y)w',q_y)u \rangle = \langle u', \cY_1(\cU(q_x)w,q_x) \cY^\boxtimes(\cU(q_y)w',q_y)u \rangle \]
for $u \in U$, $u' \in U'$, $w \in W$, and $w' \in W'$.  In particular, as a formal power series of $q$ with the coefficients in analytic functions on $x$ and $y$, we have
\[ \begin{aligned}
\Tr_U &Y^U(\cU(q_y) \cY_W(w,x-y)w',q_y) q^{L(0)-c/24} = \\
&= \Tr_U \cY_1(\cU(q_x)w,q_x) \cY^\boxtimes(\cU(q_y)w',q_y)q^{L(0)-c/24} 
\end{aligned} \]
in $\cO_4$.  By Lemma \ref{lem:tensor-products-work-well}, the coefficient functions in $x$ and $y$ attached to a power of $q$ satisfy a certain regular singular differential equation with known singularities, so the above functions are all holomorphic on $\cO_4$.
\end{rem}

\begin{lem} \label{lem:linear-independence}
The set $\{ Z_U(w,w',x-y,\tau) \}_{U \in \Irr(V)}$ forms a linearly independent collection, when viewed as maps from $W \boxtimes W'$ to either holomorphic functions on $\cO_4$ or formal power series.
\end{lem}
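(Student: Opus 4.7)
The plan is to reduce the claim to the linear independence of standard one-point trace functions $\{Z_U(\cdot,\tau)\}_{U \in \Irr(V)}$ on $V$, which is a classical consequence of Zhu's theory.  Expanding $\cY_W(w,x-y)w' = \sum_{n \in \bZ} v_n(w,w')(x-y)^{-n-1}$ with $v_n(w,w') \in V$ (only integer powers appear, since $W$ has integer weights and $\cY_W$ is valued in $V$), substituting into the definition of $Z_U$, and applying the grade-preserving calculation used just above to prove $q_y$-independence, the coefficient of $(x-y)^{-n-1}$ in $Z_U(w,w',x-y,\tau)$ equals the standard trace function $\Tr_U o(\cU(1) v_n(w,w')) q^{L(0)-c/24} = Z_U(\cU(1) v_n(w,w'), \tau)$.

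The key non-trivial step is to show that as $(w,w',n)$ vary, the elements $v_n(w,w')$ span all of $V$.  The canonical projection $\pi_W: W \boxtimes W' \to V$ is surjective, since $V$ is a simple $V$-module and $\pi_W$ is nonzero by the normalization $\lim_{z \to 0} z^{\wt(w)+\wt(w')}\pi_W(\cY^\boxtimes_{W,W'}(w,z)w') = \langle w, w' \rangle \unit$ for any pair with $\langle w, w' \rangle \neq 0$.  Let $M \subseteq V$ be the span of the $v_n(w,w')$.  The Jacobi identity for the intertwining operator $\cY_W$ expresses $u_m v_n(w,w')$ (for $u \in V$) as a finite linear combination of coefficients of $\cY_W(u_i w, z) w'$ and $\cY_W(w, z) u_j w'$, all of which again lie in $M$.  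Hence $M$ is a nonzero $V$-submodule of the simple $V$-module $V$, forcing $M = V$.

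Now suppose $\sum_U \xi_U Z_U(w,w',x-y,\tau) = 0$ for all $w \in W$ and $w' \in W'$.  Equating $(x-y)^{-n-1}$-coefficients, using the spanning property, and using the invertibility of $\cU(1): V \to V$ (an exponential of weight-lowering operators composed with a scalar on each homogeneous piece), we obtain $\sum_U \xi_U Z_U(v,\tau) = 0$ for every $v \in V$.  This forces $\xi_U = 0$ for all $U \in \Irr(V)$, by the classical linear independence of one-point trace functions on pairwise non-isomorphic simple $V$-modules: grouping the collection by lowest conformal weight $\Delta_U$, reading off the $q^{\Delta - c/24}$-coefficient for the smallest $\Delta$ in the support of the relation yields a linear relation among the $A_0(V)$-characters of the lowest-weight spaces $U(0)$, which are pairwise non-isomorphic simple $A_0(V)$-modules by Zhu's bijection and hence have linearly independent characters after passing to the semisimple quotient of $A_0(V)$ by its Jacobson radical; iterating upward in weight handles the remaining modules.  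The main obstacle is the spanning assertion in the second paragraph, which is the only step that uses VOA-specific structure; once it is in place, the remaining steps are formal extraction of coefficients and invocation of standard representation-theoretic facts.
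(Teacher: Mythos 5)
Your proposal is correct and follows essentially the same route as the paper: reduce to linear independence of one-point trace functions on $V$ by showing the coefficients of $\cY_W$ span $V$, then restrict to lowest-weight spaces and use linear independence of characters of distinct irreducible $A_0(V)$-modules without assuming semisimplicity of $A_0(V)$. The only difference is cosmetic --- you prove the spanning statement directly (the span of coefficients is a nonzero $V$-submodule of the simple module $V$) where the paper cites Proposition 4.23 of Huang--Lepowsky--Zhang, and you spell out the weight-filtration induction that the paper attributes to Zhu.
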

\begin{proof}
By Proposition 4.23 of \cite{HLZ3}, $V$ is spanned by the coefficients of $\cY_W$, so it suffices to show linear independence of traces of $V$ on simple modules $U$.  This in turn follows from a straightforward adaptation of the arguments in the beginning of the proof of Theorem 5.3.1 of \cite{Zh}.  Essentially, if we restrict the trace functions to highest-weight subspaces, we get traces of zero-mode operators on distinct irreducible $A_0(V)$-modules, and we do not need semisimplicity to conclude their linear independence.
\end{proof}

Since $\Tr_U o(L[-1]v) = 0$ and $Z_U$ does not depend on $q_y$, we can extend the skew-symmetry property of products to the following skew-symmetry property of genus-one two-point correlation functions.

\begin{lem} \label{lem:skew-symmetry}If the weights of $W$ are integers, then we have 
\[ Z_U(w,w',y-x,\tau) = Z_U(w',w,x-y,\tau) \]
\end{lem}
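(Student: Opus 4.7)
The plan is to transfer the skew-symmetry of intertwining operators on the plane into the genus-one trace, with the $\cU$-operator conjugation converting $L(-1)$ into $L[-1]$ so that the vanishing identity $\Tr_U o(L[-1]v)\,q^{L(0)-c/24}=0$, highlighted in the paragraph preceding this lemma, finishes the job.

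First I would record the plane skew-symmetry. Since $W$ has integer weights and a nonzero pairing $\langle w,w'\rangle$ forces $\wt(w)=\wt(w')$, the monodromy sign $(-1)^{\wt(w)+\wt(w')}$ equals $1$, and the identity $\cY_W(w,z)w' = e^{zL(-1)}\cY_{W'}(w',-z)w$ holds with no extra scalar (comparing the leading singular behavior against the canonical pairings that define $\cY_W$ and $\cY_{W'}$ fixes the normalization to $1$). Substituting this into the definition of $Z_U(w,w',x-y,\tau)$ turns the insertion into $Y\bigl(\cU(q_y)\,e^{(x-y)L(-1)}\,\cY_{W'}(w',y-x)w,\,q_y\bigr)$ inside the trace. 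The key maneuver is to write $\cU(q_y) = q_y^{L(0)}\cU(1)$ and to use the operator identity $\cU(1)L(-1) = L[-1]\cU(1)$, obtained by extracting the $z^{-1}$-coefficient of $\cU(1)Y(\omega,z)\cU(1)^{-1} = Y[\cU(1)\omega,z]$. This produces $\cU(q_y)\,e^{(x-y)L(-1)} = q_y^{L(0)}\,e^{(x-y)L[-1]}\,\cU(1)$, after which the grading property $Y(q_y^{L(0)}u,q_y) = q_y^{L(0)}Y(u,1)q_y^{-L(0)}$ combined with cyclicity of the trace (valid because $q_y^{L(0)}$ and $q^{L(0)-c/24}$ commute) yields
\[ Z_U(w,w',x-y,\tau) = \Tr_U Y\bigl(e^{(x-y)L[-1]}\,v,\,1\bigr)\,q^{L(0)-c/24}, \qquad v := \cU(1)\,\cY_{W'}(w',y-x)w. \]

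For the right-hand side $Z_U(w',w,y-x,\tau)$, I would apply the preceding $q_y$-independence lemma to its defining expression, specializing the auxiliary variable to $0$, to obtain $\Tr_U Y(v,1)\,q^{L(0)-c/24}$. Subtracting and expanding $e^{(x-y)L[-1]}$ as a formal series in $x-y$, the required equality reduces to
\[ \Tr_U Y\bigl(L[-1]^n v,\,1\bigr)\,q^{L(0)-c/24} = 0 \qquad \text{for all } n\ge 1. \]
Each such coefficient equals $\Tr_U o(L[-1]^n v)\,q^{L(0)-c/24}$ after extracting the grade-preserving part, and vanishes by induction from the single identity $\Tr_U o(L[-1] u)\,q^{L(0)-c/24}=0$ applied to $u = L[-1]^{n-1}v$, which is the cited vanishing statement.

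The main obstacle is the bookkeeping needed to push $e^{(x-y)L(-1)}$ through the $\cU(q_y)$ operator while keeping the trace under control; the conjugation $\cU(1)L(-1)\cU(1)^{-1} = L[-1]$ together with $L(0)$-covariance of $Y$ and cyclicity of the trace convert the $L(-1)$-exponential into an $L[-1]$-exponential, at which point the Zhu-type identity (which ultimately rests on $L[-1] = 2\pi i(L(-1)+L(0))$ and $\Tr_U[L(0),Y(u,1)]\,q^{L(0)-c/24}=0$) closes the argument.
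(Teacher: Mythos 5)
Your proposal is correct and follows essentially the same route as the paper's proof: apply plane skew-symmetry of the intertwining operator (with integer weights killing the monodromy phase), conjugate the resulting $e^{(x-y)L(-1)}$ through $\cU$ into an $L[-1]$-exponential, and kill the higher terms using $\Tr_U o(L[-1]v)q^{L(0)-c/24}=0$ together with the $q_y$-independence of $Z_U$. The paper compresses this into a four-line display; you have merely made the intermediate operator identities ($\cU(1)L(-1)\cU(1)^{-1}=L[-1]$, $L(0)$-conjugation, cyclicity of the trace) explicit.
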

\begin{proof}
By direct calculation, we have
\[ \begin{aligned}
Z_U(w,w',y-x,\tau) &= \Tr_U Y(\cU(q_x) \cY_W(w,y-x)w',q_x)q^{L(0)-c/24} \\
&= \Tr_U Y(\cU(q_x) e^{L(-1)(y-x)}\cY_{W'}(w',x-y)w,q_x)q^{L(0)-c/24} \\
&= \Tr_U Y(e^{L[-1](y-x)}\cU(q_x) \cY_{W'}(w',x-y)w,q_x)q^{L(0)-c/24} \\
 &= \Tr_U Y(\cU(q_y) \cY_{W'}(w',x-y)w,q_y)q^{L(0)-c/24}
\end{aligned} \]
We note that the integral weight property of $W$ is used in the second line.
\end{proof}

\subsection{Moore-Seiberg-Huang argument}

We now introduce a method which we call the Moore-Seiberg-Huang argument.  We wish to consider the behavior of two-point genus 1 trace functions under shifts along generating 1-cycles of an elliptic curve.  That is, we consider the formal $\bQ$-power series in $q_x$, $q_y$, and $q$:
\begin{enumerate}
\item $\Tr_P Y^P(\cU(q_y) \cY_{W',W}^V(w',x-y-1)w,q_y)q^{L(0)-c/24}$
\item $\Tr_U Y^U(\cU(q_x) \cY_{W,W'}^V(w,y-x-\tau)w',q_x)q^{L(0)-c/24}$
\end{enumerate}

From shifts by 1, we will simply pick up a phase.  From shifts by $\tau$, we will pass from a trace involving the action of $V$ on a (virtual) $V$-module to a trace involving intertwining operators from $W \boxtimes W'$.  By combining these shifts with modularity results via the $S$ transformation, we obtain strong compatibility information leading to a rigidity result.

For the first trace function (1), we have the following lemma

\begin{lem} \label{lem:translate-by-one}
Let $P$ be a simple $V$-module with weights in $p + \bN$.  Assume the weights of $W' \boxtimes P$ are in $r + \bN$.  Then
\[ Z_P(w,w',x-y-1,\tau) = e^{2\pi i(p-r)} Z_P(w,w',x-y,\tau) \]
\end{lem}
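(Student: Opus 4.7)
The plan is to rewrite $Z_P$ using the associativity form from Remark \ref{rem:associativity} and then analyze the fractional powers of $q_{x-y}$ that appear in each term of its expansion. On $\cO_4$, Remark \ref{rem:associativity} provides an intertwining operator $\cY_1 \in I\binom{P}{W, W' \boxtimes P}$ with
\[
Z_P(w,w',x-y,\tau) = \Tr_P \cY_1(\cU(q_x) w, q_x) \cY^{\boxtimes}(\cU(q_y) w', q_y) q^{L(0)-c/24}.
\]
Shifting $x - y \mapsto (x - y) - 1$ can be implemented by $x \mapsto x - 1$ with $y$ and $\tau$ held fixed, and the phase will come entirely from the fractional powers of $q_x$ (compensated by matching fractional powers of $q_y$) in this expansion.

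The first step is a weight count. For a matrix element of the composition restricted to the weight space $P_{p+m_0}$ and factoring through the intermediate space $(W' \boxtimes P)_{r + m_1}$, the weight-shift rule for intertwining operators, together with the fact that $W$ and $W'$ have integer weights and the identity $\cU(q_x)w = q_x^{L(0)} \cU(1)w$, forces a factor of $q_x^{p + m_0 - r - m_1}$ coming from $\cY_1(\cU(q_x)w, q_x)$ and a matching factor of $q_y^{r + m_1 - p - m_0}$ coming from $\cY^{\boxtimes}(\cU(q_y)w', q_y)$. Since $q_x/q_y = q_{x-y}$, each contribution to the trace is therefore a power $q_{x-y}^{p + m_0 - r - m_1}$, whose exponent lies in the single coset $(p - r) + \bZ$ and is independent of the particular weight spaces involved.

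Under the shift $x \mapsto x - 1$, each fractional power $q_{x-y}^{\alpha}$ with $\alpha \in (p - r) + \bZ$ picks up the uniform phase $e^{2\pi i (p - r)}$, determined by the paper's choice of branch cuts for $\log$. Because this phase is common to every term of the expansion, it factors out of the trace and yields the claimed identity. The main obstacle will be the bookkeeping: one must confirm that the $\cU$-operators contribute only integer powers of $q_x$ and $q_y$ (which follows from Lemma \ref{lem:u1-weight-shift} together with the integrality of the weights of $W$ and $W'$), that the $q_x$- and $q_y$-dependences combine cleanly to $q_{x-y}$ on the grade-preserving part of the trace so that no residual $q_y$ dependence remains, and that the branch-cut convention of \S 2 matches the sign $e^{2\pi i(p-r)}$ rather than its inverse.
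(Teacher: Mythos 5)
Your argument is essentially the paper's: both rewrite $Z_P$ in the factored form of Remark \ref{rem:associativity} and extract a global phase from the fact that the exponents of $q_x/q_y=q_{x-y}$ lie in the single coset $(p-r)+\bZ$ determined by the weight cosets of $P$ and $W'\boxtimes P$; the paper implements the shift as $q_y\mapsto q_y e^{2\pi i}$ and packages your term-by-term weight count into the conjugation identity of Lemma \ref{lem:generalization-of-Huang}(1), which also absorbs the possible $\log q_x$, $\log q_y$ contributions that your pure-power bookkeeping silently excludes. Modulo that point (and the overall sign of the exponent, which you rightly flag and which only enters the later arguments through the nonvanishing of the phase), the proposal is correct and follows the same route.
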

\begin{proof}
By Corollary 9.30 in \cite{HLZ6} (specifically, the second claim), there is an intertwining operator $\cY^1 \in I\binom{P}{W,W' \boxtimes P}$ such that
\[ \langle u', Y(\cY_W(w,x-y)w',y)u \rangle = \langle u', \cY^1(w,x) \cY_{W',P}^\boxtimes (w',y)u\rangle\]
 on $\cO_2$ for all $u' \in P'$ and $u \in P$.  Let $\{u_k | k \in B(P) \}$ be a homogeneous basis and $\{u'_k | k \in B(P) \}$ its dual basis.  Then since
\[ Z_P(w, w', x-y-1, \tau) = \sum_{k \in B(P)} \langle u'_k, \Tr_P Y(\cU(q_y) \cY_W(w,x-y-1)w',q_y)q^{\wt(u)-c/24} u_k \rangle\]
we use the expansion of $Z_P$ in Remark \ref{rem:associativity} to get:
\[ \begin{aligned}
Z_P(w, w', x-y-1, \tau) &= \Tr_P \cY^1(\cU(q_x)w,q_x) \cY_{W',P}^\boxtimes (\cU(q_y e^{2\pi i})w',q_y e^{2\pi i}) q^{L(0)-c/24} \\
&= \Tr_P \cY^1(\cU(q_x)w,q_x) e^{2\pi i (p-r)} \cY_{W',P}^\boxtimes (\cU(q_y)w',q_y) q^{L(0)-c/24} \\
&= e^{2\pi i (p-r)} \Tr_P \cY^1(\cU(q_x)w,q_x) \cY_{W',P}^\boxtimes (\cU(q_y)w',q_y) q^{L(0)-c/24} \\
&=  e^{2\pi i(p-r)} Z_P(w,w',x-y,\tau).
\end{aligned}\]
The second equality follows from the first claim of Lemma \ref{lem:generalization-of-Huang} with the substitution $\tau = e^{2\pi i}$.  While the Lemma asserted an equality of formal series in $\tau$, we also obtain an equality of holomorphic functions.
\end{proof}

For the second trace function (2), we have the following lemma

\begin{lem} \label{lem:translate-by-tau}
Let $U$ be a simple $V$-module.  Then as a formal $\bQ$ power series of $q_x$, $q_y$, and $q$, $Z_U(w,w',x-y-\tau,\tau)$ is a linear sum of trace functions of $W' \boxtimes W$ on $V$-modules $W' \boxtimes U$.  Namely, there is $\cY_U^2 
\in I\binom{W' \boxtimes U}{W' \boxtimes W, W' \boxtimes U}$ such that 
\[ Z_U(w,w',x-y-\tau,\tau) = \Tr_{W' \boxtimes U} \cY_U^2(\cU(q_x) \cY_{W',W}^\boxtimes (w',y-x)w, q_x) q^{L(0)-c/24}. \]
\end{lem}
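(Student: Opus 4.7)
The plan is to execute a ``$\tau$-direction Dehn twist'' on genus-one correlators by combining cyclicity of the trace with the $L(0)$-conjugation formula of Lemma \ref{lem:generalization-of-Huang}, and then to re-expand the resulting reversed-channel composition using HLZ associativity. Concretely, I would begin from the associativity identity in Remark \ref{rem:associativity}: on $\cO_4$,
\[ Z_U(w,w',x-y,\tau) \;=\; \Tr_U \cY_1(\cU(q_x)w,q_x)\,\cY_{W',U}^\boxtimes(\cU(q_y)w',q_y)\,q^{L(0)-c/24}, \]
where $\cY_1 \in I\binom{U}{W,W'\boxtimes U}$. Because every $q$-coefficient of the trace is supported on a single $L(0)$-weight of $U$, which is finite dimensional, cyclicity of the trace is legitimate, and moving $\cY_1(\cU(q_x)w,q_x)$ to the right converts $\Tr_U$ into $\Tr_{W'\boxtimes U}$.

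Next, I would commute $q^{L(0)-c/24}$ past $\cY_1$ using part 1 of Lemma \ref{lem:generalization-of-Huang} with the specialization $e^\tau = q$. The identity $q^{L(0)}\cU(q_x) = \cU(qq_x) = \cU(q_{x+\tau})$ then yields
\[ Z_U(w,w',x-y,\tau) \;=\; \Tr_{W'\boxtimes U} \cY_{W',U}^\boxtimes(\cU(q_y)w',q_y)\,\cY_1(\cU(q_{x+\tau})w,q_{x+\tau})\,q^{L(0)-c/24}. \]
Applying the invertible substitution $q_x \mapsto q_x/q$ (i.e., $x \mapsto x-\tau$) in this identity of formal $\bQ$-power series in $q_x,q_y,q$ produces
\[ Z_U(w,w',x-y-\tau,\tau) \;=\; \Tr_{W'\boxtimes U} \cY_{W',U}^\boxtimes(\cU(q_y)w',q_y)\,\cY_1(\cU(q_x)w,q_x)\,q^{L(0)-c/24}. \]

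The final step is to apply HLZ associativity (Corollary 9.30 of \cite{HLZ6}) to the reversed-channel product $\cY_{W',U}^\boxtimes(w',q_y)\cY_1(w,q_x)$ in a region where $|q_y|>|q_x|>|q_y-q_x|>0$. This furnishes an intertwining operator $\cY_U^2 \in I\binom{W'\boxtimes U}{W'\boxtimes W,\;W'\boxtimes U}$ satisfying
\[ \cY_{W',U}^\boxtimes(\cU(q_y)w',q_y)\,\cY_1(\cU(q_x)w,q_x) \;=\; \cY_U^2\!\left(\cY_{W',W}^\boxtimes(\cU(q_y)w',q_y-q_x)\cU(q_x)w,\;q_x\right). \]
Invoking part 3 of Lemma \ref{lem:generalization-of-Huang} to rewrite the argument as $\cU(q_x)\cY_{W',W}^\boxtimes(w',y-x)w$ then delivers the stated formula.

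The main obstacle is the existence of $\cY_U^2$ and the associativity identity above: this relies on the full Huang-Lepowsky-Zhang tensor product machinery, which is available because the hypotheses of Lemma \ref{lem:tensor-products-work-well} are met for regular non-negatively graded $V$. The remaining manipulations are bookkeeping: cyclicity is valid because of the finite-dimensional weight spaces at each $q$-power, and the substitution $x \mapsto x-\tau$ is a legitimate operation on formal $\bQ$-power series in $q_x,q_y,q$, which are the variables in which the lemma is formulated.
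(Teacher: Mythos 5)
Your proposal is correct and follows essentially the same route as the paper: expand $Z_U$ via HLZ associativity into $\Tr_U \cY_U^1(\cU(q_x)w,q_x)\cY_{W',U}^{\boxtimes}(\cU(q_y)w',q_y)q^{L(0)-c/24}$, absorb the $\tau$-shift by commuting $q^{L(0)}$ through one of the intertwining operators via Lemma \ref{lem:generalization-of-Huang}, use cyclicity of the graded trace to pass from $\Tr_U$ to $\Tr_{W'\boxtimes U}$, and reassociate to produce $\cY_U^2$. The only (immaterial) difference is that you apply the shift to $x$ and conjugate $q^{L(0)}$ past $\cY_U^1$, whereas the paper shifts $y$ and conjugates past $\cY_{W',U}^{\boxtimes}$ before invoking cyclicity.
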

\begin{proof}
By Corollary 9.30 in \cite{HLZ6} (in particular the second claim), there is an intertwining operator $\cY_U^1 \in I\binom{U}{W, W' \boxtimes U}$ such that
\[ \langle u', Y^U(\cY_W(w,x-y)w',y)u\rangle=\langle u', \cY_U^1(w,x)\cY_{W',U}^{\boxtimes}(w',y)u\rangle \]
and so we have 
\[ \begin{aligned}
Z_U(w,w',x-y-\tau,\tau) &= \Tr_U \cY_U^1(\cU(q_x)w, q_x) \cY_{W',U}^\boxtimes(\cU(q_y q)w',q_y q) q^{L(0)-c/24} \\
&= \Tr_U \cY_U^1(\cU(q_x)w, q_x) q^{L(0)-c/24}  \cY_{W',U}^\boxtimes(\cU(q_y)w',q_y) \\
&= \Tr_{W' \boxtimes U} \cY_{W',U}^\boxtimes(\cU(q_y)w',q_y) \cY_U^1(\cU(q_x)w, q_x) q^{L(0)-c/24}, 
\end{aligned}\]
where we use the second claim of Lemma \ref{lem:generalization-of-Huang} to obtain the second line, and the cyclic symmetry of traces to obtain the third line.  We apply the first associativity claim of Theorem 9.30 in \cite{HLZ6} to see that there is an intertwining operator $\cY_U^2 \in I\binom{W' \boxtimes U}{W' \boxtimes W, W' \boxtimes U}$ such that
\[ Z_U(w,w',x-y-\tau,\tau) = \Tr_{W' \boxtimes U} \cY_U^2\left(\cU(q_x) \cY_{W',W}^\boxtimes(w',y-x)w, q_x\right) q^{L(0)-c/24} \]
as we desired.
\end{proof}

We write $Z_{W' \boxtimes U}(\cY_U^2, w', w, y-x, \tau)$ to denote the right side of the equation in the above lemma.
In order to use both equations (1) and (2), we consider some niceness conditions for modules and virtual modules.

\begin{defn}
Suppose $V$ has non-negative $L(0)$-spectrum, and let $\{ U^i \}_{i \in D}$ be a collection of $V$-modules, and let $U = \sum_{i \in D} x_i U^i$ be a virtual module with complex coefficients.
\begin{enumerate}
\item[NPT] We say that $U$ satisfies the property NPT (for ``no pseudo-trace'') if the $S$-transformation of the trace function on $U$ is a linear combination of trace functions on $V$-modules.  Equivalently, the decomposition of $\tau^{\wt[v]}Z_U(v,-1/\tau)$ given in Theorem \ref{thm:modular} has no contributions from pseudo-traces for any $v \in V$.
\item[SW] We say that a module $U^i$ satisfies the property SW (single weight) with respect to an irreducible $V$-module $W$ if the generalized eigenvalues of $L(0)$ on $W \boxtimes U^i$ lie in a single coset of $\bZ$.  We say that the collection $\{ U_i \}_{ i \in D}$ satisfies SW if each $U^i$ satisfies SW.
\end{enumerate}
\end{defn}

\begin{lem} \label{lem:chain-of-equalities}
Suppose $\{ U^i \}_{i \in D}$ satisfies SW with respect to $W$, and $U = \sum_{i \in D} x_i U^i$ satisfies property NPT.  Then
\[ \begin{aligned}
\sum_{i \in D} &x_i \sum_j s_{i,j} Z_{W \boxtimes U^j}(\cY_{U^j}^2, w, w', x-y, \tau) \\
&= \sum_{i \in D} x_i \kappa_i \left( \sum_j s_{i,j} Z_{U^j}(w',w,y-x,\tau) + \sum_j s_{i,j} \hat{Z}_j(w',w,y-x,\tau) \right)
\end{aligned} \]
where $Z_U$ denotes $\sum_{i \in D} x_i Z_{U^i}$, $N = \wt(w) + \wt(w')$, $\kappa_i = e^{\wt(W \boxtimes U^i) - \wt(U^i)}$ for all $i \in D$, and $\hat{Z}_k(w,w',x-y,\tau)$ denotes the pseudo-trace function $\widehat{\Tr}_k(\cU(q_y) \cY_W(w,x-y)w',q_y) q^{L(0)-c/24}$.
\end{lem}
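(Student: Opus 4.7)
The plan is to execute the Moore-Seiberg-Huang argument by combining the $S$-transformation identity (Remark \ref{rem:modular}) with the two translation results of Lemma \ref{lem:translate-by-one} and Lemma \ref{lem:translate-by-tau}, then using the NPT assumption to kill the pseudo-trace contributions.  Geometrically, shifts by $1$ and by $\tau$ generate the two homology cycles of the elliptic curve, and the $S$-transformation swaps them; the desired identity is exactly this compatibility, repackaged for the virtual module $U$.

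I would begin with the $S$-transformation identity from Remark \ref{rem:modular}, applied to each simple $U^i$:
\[ \tau^{-N} Z_{U^i}(w', w, (y-x)/\tau, -1/\tau) = \sum_j s_{i,j} Z_{U^j}(w', w, y-x, \tau) + \sum_j s_{i,j} \hat{Z}_j(w', w, y-x, \tau), \]
regarded as an identity of formal power series in $y-x$ with coefficients holomorphic in $\tau$.  Next I would substitute $y-x \mapsto y-x-\tau$.  On the left the argument becomes $(y-x)/\tau - 1$; applying Lemma \ref{lem:translate-by-one} with modular parameter $-1/\tau$ (the SW hypothesis is precisely what allows this, by putting the weights of $W \boxtimes U^i$ into a single coset of $\bZ$) produces a phase factor, which is the $\kappa_i$ of the statement, so that the left side becomes $\kappa_i \tau^{-N} Z_{U^i}(w', w, (y-x)/\tau, -1/\tau)$.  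Re-using the original $S$-identity to unfold the right-hand side then gives, for each $i$,
\[ \sum_j s_{i,j} Z_{U^j}(w', w, y-x-\tau, \tau) + \sum_j s_{i,j} \hat{Z}_j(w', w, y-x-\tau, \tau) = \kappa_i \left[ \sum_j s_{i,j} Z_{U^j}(w', w, y-x, \tau) + \sum_j s_{i,j} \hat{Z}_j(w', w, y-x, \tau) \right]. \]

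Multiplying by $x_i$ and summing over $i \in D$, the right-hand side is already the RHS of the desired identity.  For the left-hand side I would use the NPT hypothesis on $U$ to kill the pseudo-trace term $\sum_{i,j} x_i s_{i,j} \hat{Z}_j(w', w, y-x-\tau, \tau)$: by Remark \ref{rem:modular} each coefficient in the $y-x$ expansion of the two-point pseudo-trace is a one-point pseudo-trace at a specific $v \in V$, and NPT forces each such one-point contribution to cancel in the combination $\sum_{i,j} x_i s_{i,j}$, so the cancellation is uniform in the spectral variable.  With the pseudo-trace piece eliminated, I would then apply Lemma \ref{lem:translate-by-tau} in the form with $W$ and $W'$ interchanged (legitimate because $W'$ is also a simple module with integer weights) to rewrite $Z_{U^j}(w', w, y-x-\tau, \tau) = Z_{W \boxtimes U^j}(\cY_{U^j}^2, w, w', x-y, \tau)$, which produces the left-hand side of the desired identity.

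The main obstacle is bookkeeping rather than mathematical depth: one must track the role-swaps $W \leftrightarrow W'$, $w \leftrightarrow w'$, and $x \leftrightarrow y$ needed to apply the auxiliary lemmas on the correct sides of the equation, and correspondingly identify the sign of the phase $\kappa_i$ that Lemma \ref{lem:translate-by-one} produces.  The one substantive point that deserves a careful sentence is the justification that NPT, though stated only for one-point functions, forces vanishing of the pseudo-trace part of the two-point function at \emph{arbitrary} values of the spectral variable; this follows from the observation in Remark \ref{rem:modular} that each coefficient of the two-point function in its $x-y$ expansion is itself a one-point trace function to which NPT applies directly.
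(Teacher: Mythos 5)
Your proposal is correct and uses exactly the same ingredients as the paper's proof — Lemma \ref{lem:translate-by-tau} (with $W$ and $W'$ swapped), the $S$-transformation of Remark \ref{rem:modular}, Lemma \ref{lem:translate-by-one} via SW to produce $\kappa_i$, and NPT to discard the pseudo-trace terms — merely assembled in the reverse order (starting from the $S$-identity and substituting $y-x\mapsto y-x-\tau$, rather than walking from the left side of the stated identity). Your explicit coefficient-by-coefficient justification that NPT kills the two-point pseudo-trace contribution is a slightly more careful rendering of what the paper compresses into ``property NPT combined with Remark \ref{rem:modular}.''
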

\begin{proof}
We obtain the following chain of equalities
\[ \begin{aligned}
\sum_{i \in D} &x_i \sum_j s_{i,j} Z_{W \boxtimes U^j}(\cY_{U^j}^2, w, w', x-y, \tau) \\
&= \sum_{i \in D} x_i \sum_j s_{i,j} Z_{U^j}(w',w,y-x-\tau,\tau) \\
&= \tau^{-N} Z_U(w',w,\frac{y-x-\tau}{\tau}, -1/\tau) \\
&= \tau^{-N} Z_U(w',w,\frac{y-x}{\tau}-1, -1/\tau) \\
&= \tau^{-N} \sum_{i \in D} x_i \kappa_i Z_{U^i}(w',w,\frac{y-x}{\tau}, -1/\tau) \\
&= \sum_{i \in D} x_i \kappa_i \left( \sum_j s_{i,j} Z_{U^j}(w',w,y-x,\tau) + \sum_j s_{i,j} \hat{Z}_j(w',w,y-x,\tau) \right)
\end{aligned} \]
where the first equality comes from Lemma \ref{lem:translate-by-tau}, the second equality is property NPT combined with Remark \ref{rem:modular}, the fourth equality is Lemma \ref{lem:translate-by-one} combined with property SW, and the fifth is the combination of Remark \ref{rem:modular} and Lemma \ref{lem:skew-symmetry}.
\end{proof}

\begin{lem} \label{lem:get-traces}
If $U= \sum_{i \in D} x_i U^i$ satisfies NPT and SW with respect to $W$, then the following two statements hold:
\begin{enumerate}
\item $\sum_{i \in D} x_i \sum_j s_{i,j} Z_{W \boxtimes U^j}(\cY_U^2, w, w', y-x, \tau)$ is a power series in $y-x$ whose coefficients are trace functions of $V$ on $V$-modules.  That is, there are no pseudo-traces, and the coefficients factor through $\pi_W: W \boxtimes W' \to V$.
\item The virtual module $\sum_{i \in D} x_i \kappa_i U^i$ satisfies NPT.
\end{enumerate}
\end{lem}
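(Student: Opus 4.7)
The plan begins by applying Lemma \ref{lem:chain-of-equalities} to the quantity $F := \sum_{i \in D} x_i \sum_j s_{i,j} Z_{W \boxtimes U^j}(\cY_{U^j}^2, w, w', y-x, \tau)$, writing $F = A + B$ where $A = \sum_j \bigl(\sum_i x_i \kappa_i s_{i,j}\bigr) Z_{U^j}(w',w,y-x,\tau)$ is the trace part and $B = \sum_k \bigl(\sum_i x_i \kappa_i s_{i,k}\bigr) \hat Z_k(w',w,y-x,\tau)$ is the pseudo-trace part. The coefficients of $A$ as a power series in $y-x$ are linear combinations of the trace functions $Z_{U^j}(v,\tau)$ with insertions $v \in V$ coming from the expansion of $\cY_{W'}(w',y-x)w$; since $\cY_{W'} = \pi_{W'}\circ \cY_{W',W}^{\boxtimes}$, these factor through $\pi_{W'}$, hence through $\pi_W$ by the braiding combined with the skew-symmetry in Lemma \ref{lem:skew-symmetry}. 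Thus the factorization claim in (1) is immediate, and both (1) and (2) reduce to showing $B = 0$, i.e., $\sum_{i \in D} x_i \kappa_i s_{i,k} = 0$ for every pseudo-trace index $k = r+1,\ldots,r+d$.

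To establish $B = 0$, the key observation is that the first equality in the chain of Lemma \ref{lem:chain-of-equalities} (which uses only Lemma \ref{lem:translate-by-tau} and does not invoke NPT) provides an alternative expression $F = \sum_{i \in D} x_i \sum_j s_{i,j} Z_{U^j}(w',w,y-x-\tau,\tau)$ as a pure linear combination of $V$-module trace functions with no pseudo-trace summands. Equating this with $A + B$ and expanding both as power series in $y-x$ yields, in each degree $p$, an identity of $\tau$-dependent functions: one side consists of finite sums of the form $\tau^{n-p} Z_{U^j}(v_n,\tau)$ weighted by $\sum_i x_i s_{i,j}$, while the other produces trace functions $Z_{U^j}(v_p,\tau)$ weighted by $\sum_i x_i \kappa_i s_{i,j}$ together with pseudo-trace functions $\hat\Tr_k o(v_p) q^{L(0)-c/24}$ weighted by $\sum_i x_i \kappa_i s_{i,k}$. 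Invoking the linear independence of trace functions on simple $V$-modules and the pseudo-trace basis of $\cC_1(V)$ supplied by Theorem \ref{thm:modular}, and noting that as $w$, $w'$, and $p$ vary the insertions $v_p$ span all of $V$ (by the spanning property used in the proof of Lemma \ref{lem:linear-independence}), one concludes that the pseudo-trace coefficients $\sum_i x_i \kappa_i s_{i,k}$ must vanish for each $k > r$. This simultaneously yields $B = 0$, proving (1), and the NPT property for the virtual module $\sum_i x_i \kappa_i U^i$, proving (2).

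The main obstacle lies in the final linear-independence step, since the polynomial-in-$\tau$ coefficients $\tau^{n-p}$ produced on the left-hand side, once rewritten via $\tau = \log q/(2\pi i)$ as powers of $\log q$, formally mimic the logarithmic structure of pseudo-trace functions on the right-hand side. Care is required to disentangle the two kinds of $\tau$-dependent contributions, using the distinct $q \to 0$ asymptotic behaviour attached to trace functions on different simple modules and to the pseudo-trace basis vectors, and to verify that no hidden cancellations obscure the vanishing of $\sum_i x_i \kappa_i s_{i,k}$ for $k > r$.
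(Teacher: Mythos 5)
Your reduction of both claims to the vanishing of the pseudo-trace part $B$, and your handling of the $\pi_W$-factorization via skew-symmetry, are in the spirit of the paper's argument. But the central step --- showing $B=0$ --- has a genuine gap, and you have in fact flagged it yourself without resolving it. You propose to expand $\sum_i x_i\sum_j s_{i,j}Z_{U^j}(w',w,y-x-\tau,\tau)$ as a power series in $y-x$ and to read off coefficients of the form $\tau^{n-p}Z_{U^j}(v_n,\tau)$. This expansion is not available: the shift of the insertion point by $\tau$ is a monodromy around the $B$-cycle of the torus, not a small perturbation, and the formal binomial expansion of $(y-x-\tau)^{-m-1}$ in powers of $y-x$ produces an infinite sum in each degree together with negative powers of $\tau$. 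Even granting some regularized version, the resulting objects $\tau^{n-p}Z_{U^j}(v_n,\tau)$ do not lie in the space $\cC_1(V)$ spanned by traces and pseudo-traces, so the linear independence supplied by Theorem \ref{thm:modular} cannot be invoked to separate them from the pseudo-trace terms. Your closing paragraph names exactly this difficulty (``the polynomial-in-$\tau$ coefficients \dots formally mimic the logarithmic structure of pseudo-trace functions'') as the main obstacle, and the proposal does not overcome it.

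The missing idea is that no re-expansion of the $\tau$-shifted function is needed at all. The left-hand side of Lemma \ref{lem:chain-of-equalities} is \emph{already}, coefficient by coefficient in $y-x$, an honest trace over a genuine module: the coefficient of $(y-x)^{-m-1}$ is
\[ \sum_{i\in D} x_i\sum_j s_{i,j}\,\Tr_{W\boxtimes U^j}\cY^2_{U^j}\bigl(\cU(q_y)\,w_m w',q_y\bigr)q^{L(0)-c/24}, \]
which, after passing to composition factors of $W\boxtimes U^j$ (traces depend only on the semisimplification), lies in the span of trace functions on irreducible $V$-modules with no pseudo-trace contribution. The right-hand side has coefficients that are linear combinations of traces and pseudo-traces of operators $o(\pi_W(\cdot))$, hence factor through $\pi_W$. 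Equating the two and using the linear independence of traces and pseudo-traces within $\cC_1(V)$ forces the pseudo-trace coefficients to vanish and the trace contributions to come from $V$, which is claim (1); claim (2) then follows by specializing $W=V$ and $w'=\unit$ so that the two-point identity degenerates to the $S$-transformation of the one-point function of the virtual module $\sum_i x_i\kappa_i U^i$ --- a specialization your write-up also omits.
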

\begin{proof}
The sum $\sum_{i \in D} x_i \sum_j s_{i,j} Z_{W \boxtimes U^j}(\cY_{U^j}^2, w, w', y-x, \tau)$ that appears on the left side of Lemma \ref{lem:chain-of-equalities} is a linear combination of trace functions of $W \boxtimes W'$.  By this, we mean that the coefficient attached to $(y-x)^{-m-1}$ is
\[ \sum_{i \in D} x_i \sum_j s_{i,j}\Tr_{W \boxtimes U^j} \cY_{U^j}^2(\cU(q_y) w_m w', q_y) q^{L(0)-c/24}.\]
However, by Remark \ref{rem:modular}, the sum on the right side of that Lemma is a linear combination of trace and pseudo-trace functions of operators from $V$, where we mean that the coefficients of powers of $y-x$ are linear maps from $W \boxtimes W'$ to a space of holomorphic functions in $\tau$ that vanish on the kernel of $\pi_W: W \boxtimes W' \to V$.  Since the two sides are equal, we conclude that the contributions from pseudo-traces vanish, and all trace contributions come from $V$.  This proves the first claim.

For the second claim, we note that the previous paragraph yields the equality
\[ \tau^{-N} \sum_{i \in D} x_i \kappa_i Z_{U^i}(w',w,\frac{y-x}{\tau}, -1/\tau) = \sum_{i \in D} x_i \kappa_i \sum_j s_{i,j} Z_{U^j}(w',w,y-x,\tau) \]
If we choose our integral weight module $W$ to be $V$, then we obtain a decomposition of $Z_U(v,-1/\tau)$ into traces by setting $w' = \unit$.
\end{proof}

\begin{prop} \label{prop:tj-satisfies-npt}
If $V$ has non-negative $L(0)$-spectrum, then for every $j$, $T^{(j)}$ satisfies NPT.
\end{prop}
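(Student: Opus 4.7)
The plan is to exploit the fact that the eigenmodules $T^{(j)}$ are built from a single regular VOA $T$ by the action of the finite-order automorphism $\sigma$, and to import the (already-known) good modular behavior of trace functions on regular $T$ together with its twisted modules. To that end, for each $k \in \bZ/n\bZ$ and $v \in V$, I would introduce the $\sigma^k$-twisted trace on $T$,
\[
Z_T^{(k)}(v,\tau) \;=\; \Tr_{T}\, \sigma^k\, o(v)\, q^{L(0)-c/24}.
\]
Because $\sigma$ acts on $T^{(j)}$ as the scalar $e^{2\pi i j/n}$, this satisfies $Z_T^{(k)}(v,\tau) = \sum_{j=0}^{n-1} e^{2\pi i jk/n} Z_{T^{(j)}}(v,\tau)$, so Fourier inversion over $\bZ/n\bZ$ expresses each $Z_{T^{(j)}}(v,\tau)$ as a $\bC$-linear combination of the $Z_T^{(k)}(v,\tau)$. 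It therefore suffices to show that the $S$-transform of every $Z_T^{(k)}(v,\cdot)$ is a linear combination of trace functions on $V$-modules (with no pseudo-trace contributions).

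Next I would invoke the modular invariance for orbifold trace functions. Since $T$ is simple, regular and non-negatively graded, the classical orbifold modular invariance (Zhu--DLM, and the refinement in \cite{M04}) applies to the $\sigma^k$-twisted setting: $\tau^{-\wt[v]}Z_T^{(k)}(v,-1/\tau)$ is a linear combination of ordinary trace functions on $\sigma^k$-twisted $T$-modules, and because $T$ is regular the category of $\sigma^k$-twisted $T$-modules is semisimple so no pseudo-trace terms appear. This is the key input that transfers ``NPT at the level of $T$'' into usable data for $V$.

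Finally I would restrict the resulting twisted $T$-module trace functions to $V$. Any simple $\sigma^k$-twisted $T$-module $W$, when viewed as a weak $V$-module, is admissible and generated by a finite-dimensional piece, so by Corollary \ref{cor:irreducible-induced-modules} together with the simple-current property of the $T^{(j)}$ (Theorem \ref{thm:simple-current}), it decomposes as a finite direct sum of simple $V$-modules. Consequently the trace function $\Tr_W o(v)q^{L(0)-c/24}$ for $v \in V$ is literally a $\bN$-linear combination of trace functions on simple $V$-modules. Combining this with the previous paragraph and undoing the discrete Fourier transform then yields
\[
\tau^{-\wt[v]} Z_{T^{(j)}}(v,-1/\tau) \;=\; \sum_{k} c_{j,k}\, Z_{U^k}(v,\tau)
\]
for some constants $c_{j,k}$ and irreducible $V$-modules $U^k$, which is exactly the NPT property.

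The main obstacle is the middle step: one must make sure to cite (or verify) that the orbifold modular invariance statement gives pseudo-trace--free expressions when the ambient VOA $T$ is regular. Once this is in hand, the only remaining point is that branching from twisted $T$ to $V$ never introduces logarithmic modules, and this is precisely what the simple-current and induction analysis of Section~4 provides.
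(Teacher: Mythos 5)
Your opening move --- writing $Z_{T^{(j)}}$ as a discrete Fourier transform of the twisted traces $Z_T^{(k)}(v,\tau)=\Tr_T\,\sigma^k o(v)q^{L(0)-c/24}$ --- is exactly the decomposition the paper uses (its virtual modules $\sum_j \xi^{kj}T^{(j)}$ are these Fourier modes). The problem is your middle step, and it is not a citation issue but a genuine circularity. The Dong--Li--Mason orbifold modular invariance theorem, which is what you need to say that $\tau^{-\wt[v]}Z_T^{(k)}(v,-1/\tau)$ is a combination of ordinary traces on $\sigma^k$-twisted $T$-modules, is proved under the hypothesis that $T$ is $g$-rational for the relevant twistings; semisimplicity of the $\sigma^k$-twisted module category does \emph{not} follow from regularity of $T$ alone. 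In this paper that semisimplicity is precisely Corollary \ref{cor:g-rational}, which is deduced \emph{from} the Main Theorem, which rests on the rigidity argument, which rests on Proposition \ref{prop:tj-satisfies-npt}. So the input you want to import is downstream of the statement you are trying to prove. Without $\sigma^k$-rationality the best one could hope for is an $S$-transform involving pseudo-traces on twisted modules, which is exactly what NPT forbids.

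The paper avoids twisted modular invariance entirely. It stays inside the untwisted $V$-module category, where Theorem \ref{thm:modular} (Miyamoto's $C_2$-cofinite modular invariance, with pseudo-traces allowed a priori) is available, and runs an induction on the Fourier mode $k$: the mode $k=0$ is $Z_T$ itself, which is NPT because $T$ is regular, and the Moore--Seiberg--Huang two-point genus-one argument (Lemmas \ref{lem:chain-of-equalities} and \ref{lem:get-traces}, using translation by $1$ and by $\tau$ against a suitably chosen simple current multiplier $W$) shows that NPT for $\sum_j\xi^{kj}T^{(j)}$ forces the pseudo-trace contributions for $\sum_j\xi^{(k+1)j}T^{(j)}$ to vanish. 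If you want to salvage your route, you would have to supply an independent proof that the $\sigma^k$-twisted module categories of $T$ are semisimple, which is essentially as hard as the main theorem itself.
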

\begin{proof}
We note that each $T^{(j)}$ satisfies property SW with respect to any irreducible $V$-module $W$ because it is a simple current.  We would like to choose $W$ such that the following weight condition is satisfied:
\[ \wt(W \boxtimes T^{(j)}) - \wt(W) \equiv \frac{j}{n} \pmod{\bZ}. \]
If $W$ is an irreducible $V$-submodule of a $\sigma$-twisted $T$-module, then this is satisfied by the monodromy property of twisted modules.  By Theorem 1.1(ii) of \cite{DLM}, there exists a nonzero $\sigma$-twisted $T$-module, so there is some irreducible $V$-module $W$ satisfying the weight condition.

We claim that we may assume that $W$ has integral weight.  To show this we note that the lowest weight of $W$ is rational because $V$ is $C_2$-cofinite, so there is a lattice VOA $V_L$ (for $L$ positive-definite and even) and a simple $V_L$-module $V_{L + \alpha}$ such that $\wt(W) + \wt(V_{L+\alpha}) \in \bZ$.  Then $T \otimes V_L$ is a regular VOA with nonsingular invariant bilinear form, and $\hat{\sigma} = \sigma \otimes id_{V_L}$ is a finite order automorphism whose fixed-point subVOA is $V \otimes V_L$.  If $T^{(j)} \otimes V_L$ satisfies NPT, then $T^{(j)}$ satisfies NPT, so if $W$ has non-integral weight, we may replace $V$ with $V \otimes V_L$ and $W$ with $W \otimes V_{L + \alpha}$ for suitable $L$ and $\alpha$ in the following paragraph.

Set $\xi = e^{2\pi i/n}$.  We will prove that the virtual module $\sum_{j=0}^{n-1} \xi^{kj} T^{(j)}$ satisfies NPT for all $k$ by induction on $k$.  The case $k=0$ follows from the assumption that $T$ is regular.  Now, suppose $\sum_{j=0}^{n-1} \xi^{kj} T^{(j)}$ satisfies NPT for some $k$.  Thus, we obtain the equalities given in Lemma \ref{lem:chain-of-equalities}.   By the second claim of Lemma \ref{lem:get-traces}, we find that $\sum_{j=0}^{n-1} \xi^{(k+1)j} T^{(j)}$ also satisfies NPT, so the induction works.  By a discrete Fourier transform, we conclude that any virtual module given by a linear combination of $T^{(j)}$ satisfies NPT.
\end{proof}

\subsection{Rigidity}

Let $T$ be a non-negatively graded simple regular VOA with a finite automorphism $\sigma$ satisfying $T'\cong T$ and let $V$ be the fixed-point subVOA $T^\sigma$.  As we showed in Theorem \ref{thm:OPR2}, $V$ is projective, and $V' \cong V$ by the invariant form assumption.  Thus, for each simple $V$-module $W$, there is a unique pair given by a projection $\pi_W: W \boxtimes W' \to V$ and an embedding $e_W: V \to W \boxtimes W'$, such that $\pi_W \circ e_W = id_V$.  

Recall that for simple $V$-modules $A$, $B$, and $C$, there is an associator isomorphism 
\begin{equation} \label{eq:associator}
\mu_{A,B,C}:(A\boxtimes B)\boxtimes C \to A\boxtimes (B\boxtimes C)
\end{equation}
satisfying 
\[ \langle d',\mu_{A,B,C}\cY_{A\boxtimes B,C}^{\boxtimes}(\cY_{A,B}^{\boxtimes}(a,x_1)b,x_2)c\rangle
=\langle d', \cY_{A,B\boxtimes C}^{\boxtimes}(a,x_1-x_2)\cY_{B,C}^{\boxtimes}(b,x_2)c\rangle \]
in the open domain 
\[ \{(x_1,x_2)\in \bC^2 \mid 0<|x_1|<|x_2|<|x_1-x_2| \}\]
for any $a\in A, b\in B, c\in C$ and $d'\in (A\boxtimes (B\boxtimes C))'$, where $\langle\cdot,\cdot \rangle$ is the pairing on $(A\boxtimes(B\boxtimes C))'\times A\boxtimes(B\boxtimes C)$ (see Theorem 10.3 of \cite{HLZ6}).  A simple $V$-module $W$ is called ``rigid'' if the following composite is an isomorphism:

\[ W\boxtimes V \overset{{\rm id}_W \sbxt e_{W'}}{\longrightarrow} W 
\boxtimes (W'\boxtimes W) \overset{\mu}{\to} 
(W \boxtimes W') \boxtimes W \overset{\pi_W \sbxt {\rm id}_{W}}{\longrightarrow} V \boxtimes W \]

Our goal now is to prove that every simple $V$-module $W$ is rigid.  We first define some notation that we will use later. 

\begin{defn}
Set $D=e_W(V)\boxtimes W\subseteq (W\boxtimes W')\boxtimes W$. Then $D$ is a direct factor 
of $(W\boxtimes W')\boxtimes W$, and we let $i_D$ and $\pi_D$ denote the corresponding inclusion and projection maps.  Similarly, we set $F=\mu^{-1}(D)$ and let $i_F$ and $\pi_F$ denote the inclusion and projection maps.  We end up with the following diagram:
\[ \xymatrix{ F \ar[r]^{\mu|_F} \ar@<-1ex>[d]_{i_F} & D \ar@<-1ex>[d]_{i_D} \\ W\boxtimes (W'\boxtimes W) \ar@<-1ex>[u]_{\pi_F} \ar[r]^{\mu} & (W\boxtimes W')\boxtimes W \ar@<-1ex>[u]_{\pi_D} \ar[r]_-{\pi_W \boxtimes id_W} & V \boxtimes W \ar[ul]_{e_W \boxtimes id_W}
} \]
\end{defn}

\begin{lem} \label{lem:rigidity-with-piF}
$W$ is rigid if and only if $\Ker\pi_F$ does not contain $W\boxtimes e_{W'}(V)$.  
\end{lem}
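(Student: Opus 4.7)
The plan is to exploit the simplicity of $W$ to replace the rigidity condition (that the composite morphism be an isomorphism) with the weaker condition that the composite be nonzero, and then to translate this nonvanishing into the statement about $\Ker \pi_F$ via a factorization of the composite through the projection $\pi_F$.

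First, I would note that since $V$ is the tensor unit in the braided monoidal category of $V$-modules (Lemma \ref{lem:tensor-products-work-well}), both $W \boxtimes V$ and $V \boxtimes W$ are canonically identified with $W$ via the unit isomorphisms. Under these identifications, the composite
\[ \phi := (\pi_W \sbxt \mathrm{id}_W) \circ \mu \circ (\mathrm{id}_W \sbxt e_{W'}) \colon W\boxtimes V \to V\boxtimes W \]
appearing in the definition of rigidity becomes a $V$-module endomorphism of the simple module $W$. By Schur's lemma, $\phi$ is either zero or an isomorphism, so $W$ is rigid if and only if $\phi \neq 0$.

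Next I would factor $\phi$ through $\pi_F$. The splitting $\pi_W \circ e_W = \mathrm{id}_V$ gives a direct sum decomposition $W \boxtimes W' = e_W(V) \oplus \Ker \pi_W$, which upon applying $- \boxtimes W$ yields $(W \boxtimes W') \boxtimes W = D \oplus (\Ker \pi_W \boxtimes W)$; pulling back through the iso $\mu$ one obtains $W \boxtimes (W' \boxtimes W) = F \oplus F^c$ with $F^c = \mu^{-1}(\Ker \pi_W \boxtimes W)$. Observe that the restriction $(\pi_W \sbxt \mathrm{id}_W)|_D$ is inverse to $e_W \sbxt \mathrm{id}_W \colon V \boxtimes W \to D$, so it is an isomorphism; and $\mu|_F \colon F \to D$ is an isomorphism by construction. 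Using that $\pi_W \sbxt \mathrm{id}_W$ vanishes on $\Ker \pi_W \boxtimes W$, and that $\mu$ respects the two decompositions, one routinely verifies the factorization
\[ \phi = (\pi_W \sbxt \mathrm{id}_W)|_D \circ \mu|_F \circ \pi_F \circ (\mathrm{id}_W \sbxt e_{W'}). \]

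Finally, the first two factors being isomorphisms, $\phi \neq 0$ if and only if $\pi_F \circ (\mathrm{id}_W \sbxt e_{W'}) \neq 0$. Since $\mathrm{id}_W \sbxt e_{W'}$ is a split injection with image $W \boxtimes e_{W'}(V)$, this nonvanishing is equivalent to $\pi_F\bigl(W\boxtimes e_{W'}(V)\bigr) \neq 0$, i.e., to $W \boxtimes e_{W'}(V) \not\subseteq \Ker \pi_F$. Combined with the Schur reduction, this yields the desired equivalence. The argument is essentially formal, and I do not expect any serious obstacle beyond the bookkeeping of the direct-sum decompositions and the verification that $\pi_D \circ \mu = \mu|_F \circ \pi_F$; the nontrivial inputs (existence and uniqueness of $\pi_W$ and $e_W$, and the associator $\mu$) are supplied by the preceding discussion.
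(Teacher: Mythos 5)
Your proof is correct and takes essentially the same route as the paper's one-line argument, which simply cites the facts that $\mu|_F\colon F\to D$ and $e_W\sbxt \mathrm{id}_W\colon V\boxtimes W\to D$ are isomorphisms; you have merely made explicit the factorization through $\pi_F$ and the Schur-type reduction (nonzero maps between simple modules are isomorphisms) that the paper leaves implicit.
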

\begin{proof}
This follows from the fact that $\mu|_F: F \to D$ and $e_W \boxtimes id_W: V \boxtimes W \to D$ are isomorphisms.
\end{proof}

Let us explain the rigidity condition from the viewpoint of intertwining operators. 
Recall that we introduced $\cY_W^1\in I\binom{W}{W, W'\boxtimes W}$ for $U=W$ in the proof of Lemma \ref{lem:translate-by-tau} as an intertwining operator satisfying 
\[ \langle u', Y^W \left(\cY_W(w,x-y)w',y\right)u\rangle
=\langle u', \cY_W^1(w,x)\cY_{W',W}^{\boxtimes}(w',y)u\rangle \]
for $u,w\in W, u',w'\in W'$. 

\begin{lem} \label{lem:YW1-as-fusion}
Let $\phi: W \to D$ be the composite of the canonical isomorphisms $W \to V \boxtimes W$ and $e_W \sbxt id_W$.  Then we have an equality $\phi \circ \cY_W^1 = \mu|_F \circ \pi_F \circ \cY_{W,W'\boxtimes W}^{\boxtimes}$ of intertwining operators of type $\binom{D}{W, W' \boxtimes W}$.
\end{lem}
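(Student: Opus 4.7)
The plan is to factor $\cY_W^1$ through the universal intertwining operator $\cY_{W,W'\boxtimes W}^\boxtimes$ using its defining identity together with the associator, and then identify the resulting factorization with $\mu|_F \circ \pi_F$ after composing with $\phi$.

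First I would unpack the defining relation of $\cY_W^1$. Writing $\cY_W = \pi_W \circ \cY_{W,W'}^\boxtimes$ and $Y^W = \lambda_W \circ \cY_{V,W}^\boxtimes$ for the left-unitor isomorphism $\lambda_W : V\boxtimes W \to W$, the functoriality of $\boxtimes$ with respect to $\pi_W$ rewrites the left-hand side of the defining identity for $\cY_W^1$ as
\[ \langle u', \lambda_W (\pi_W \sbxt id_W)\,\cY_{W\boxtimes W', W}^\boxtimes(\cY_{W,W'}^\boxtimes(w,x-y)w',y)u\rangle \]
on $\cO_2$. Then the associator equation \eqref{eq:associator}, used with the orientation of $\mu$ given in the diagram defining $F$ and $D$, converts the iterated product of universal intertwining operators into $\mu$ applied to $\cY_{W,W'\boxtimes W}^\boxtimes(w,x)\cY_{W',W}^\boxtimes(w',y)u$. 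Combined with the defining identity of $\cY_W^1$, this produces the pointwise equality
\[ \cY_W^1(w,x)\cY_{W',W}^\boxtimes(w',y)u = \lambda_W (\pi_W \sbxt id_W)\mu\,\cY_{W,W'\boxtimes W}^\boxtimes(w,x)\cY_{W',W}^\boxtimes(w',y)u \]
on $\cO_2$, for all $w, w', u$.

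Next I would upgrade this to an identity of $V$-module maps. The difference $\cD := \cY_W^1 - \lambda_W \circ (\pi_W \sbxt id_W) \circ \mu \circ \cY_{W,W'\boxtimes W}^\boxtimes$ is an intertwining operator of type $\binom{W}{W, W'\boxtimes W}$ that vanishes on every element of the form $\cY_{W',W}^\boxtimes(w',y)u$; surjectivity of the universal intertwining operator $\cY_{W',W}^\boxtimes$ (guaranteed by Lemma \ref{lem:tensor-products-work-well}) then forces $\cD=0$. Hence $\cY_W^1 = \tilde\Phi \circ \cY_{W,W'\boxtimes W}^\boxtimes$ for the $V$-module map $\tilde\Phi = \lambda_W \circ (\pi_W \sbxt id_W) \circ \mu$. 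Composing with $\phi = (e_W \sbxt id_W) \circ \lambda_W^{-1}$ cancels the two unitors and gives
\[ \phi \circ \cY_W^1 = \bigl((e_W \circ \pi_W) \sbxt id_W\bigr) \circ \mu \circ \cY_{W,W'\boxtimes W}^\boxtimes. \]

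To finish, I would identify the right-hand side with $\mu|_F \circ \pi_F \circ \cY_{W,W'\boxtimes W}^\boxtimes$. Since $\pi_W \circ e_W = id_V$, the endomorphism $e_W \circ \pi_W$ of $W\boxtimes W'$ is the idempotent projection with image $e_W(V)$ and kernel $\ker\pi_W$; by biadditivity of $\boxtimes$, $(e_W \circ \pi_W) \sbxt id_W$ is then the idempotent projection of $(W\boxtimes W')\boxtimes W$ onto $D$, that is, $i_D \circ \pi_D$. Because $\mu$ is an isomorphism carrying the summand $F$ to $D$ and its complement to $(\ker \pi_W)\boxtimes W$, we have $\pi_D \circ \mu = \mu|_F \circ \pi_F$, which combined with the previous display yields the claimed identity of intertwining operators into $D$. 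The main obstacle I anticipate is the passage from pointwise equality of (logarithmic, multivalued) holomorphic functions on $\cO_2$ to equality of $V$-module maps: this rests on the surjectivity half of the universal property together with the convergence/regular-singular machinery of Lemma \ref{lem:tensor-products-work-well} and Lemma \ref{lem:generalized-associativity}, and requires a consistent convention on the direction of the associator $\mu$ throughout.
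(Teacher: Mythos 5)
Your proposal is correct and follows essentially the same route as the paper's proof: both use functoriality of the $P(1)$-tensor product to express $\phi\circ Y^W$ through the universal intertwining operator, the defining identity of $\cY_W^1$, the associativity isomorphism to pass from the iterated to the nested product, and surjectivity of $\cY_{W',W}^{\boxtimes}$ to promote the pointwise identity on $\cO_2$ to an identity of intertwining operators. The only cosmetic difference is that you factor through the left unitor $\lambda_W$ and compose with $\phi$ at the end, whereas the paper applies $\phi$ from the start and works with restrictions to the summand $e_W(V)\otimes W$; the identification $(e_W\circ\pi_W)\sbxt id_W = i_D\circ\pi_D$ you use is exactly the implicit step in the paper's display.
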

\begin{proof}
By functoriality of $P(1)$-tensor product, the restriction of $\cY_{W\boxtimes W',W}^{\boxtimes}$ to the summand $e_W(V) \otimes W \subset (W \boxtimes W') \otimes W$ is given by $i_D \circ \cY_{e_W(V),W}^{\boxtimes}$.  Again by functoriality, $\phi \circ Y^W = \cY_{e_W(V),W}^{\boxtimes} \circ (e_W \otimes id_W)$ as intertwining operators of type $\binom{D}{V,W}$.  We therefore have $\phi \circ Y^W = \pi_D \circ \cY_{W\boxtimes W',W}^{\boxtimes} \circ (e_W \otimes id_W)$ as intertwining operators of type $\binom{D}{V,W}$.  By composing with $\cY_W$ we find that
\[ \pi_D \circ \cY_{W\boxtimes W',W}^{\boxtimes}(\cY_{W,W'}^{\boxtimes}(w,x-y)w',y)u = \phi \circ Y^W \left(\cY_W(w,x-y)w',y\right)u \]
as formal power series in $D$, and for any $d' \in D'$, we obtain an equality of holomorphic functions on $\cO_2$:
\[ \langle d', \pi_D \circ \cY_{W\boxtimes W',W}^{\boxtimes}(\cY_{W,W'}^{\boxtimes}(w,x-y)w',y)u\rangle = \langle d', \phi \circ Y^W \left(\cY_W(w,x-y)w',y\right)u \rangle. \]
By the defining property of $\cY_W^1$, we have the equality
\[ \langle d', \phi \circ Y^W \left(\cY_W(w,x-y)w',y\right)u\rangle
= \langle d', \phi \circ \cY_W^1(w,x)\cY_{W',W}^{\boxtimes}(w',y)u\rangle. \]
By associativity of fusion products, we have the equality
\[ \langle x', \cY_{W\boxtimes W',W}^{\boxtimes}(\cY_{W,W'}^{\boxtimes}(w,x-y)w',y)u \rangle = \langle x', \mu \circ \cY_{W,W'\boxtimes W}^{\boxtimes}(w,x)\cY_{W',W}^{\boxtimes}(w',y)u \rangle \]
for all $x' \in ((W \boxtimes W') \boxtimes W)'$, so setting $x' = d' \circ \pi_D$ yields
\[ \langle d', \phi \circ \cY_W^1(w,x)\cY_{W',W}^{\boxtimes}(w',y)u \rangle = \langle d', \mu|_F \circ \pi_F \circ \cY_{W,W'\boxtimes W}^{\boxtimes}(w,x)\cY_{W',W}^{\boxtimes}(w',y)u \rangle \]
By surjectivity of $\cY_{W',W}^{\boxtimes}$, the conclusion follows.
\end{proof}

\begin{lem}
The subspace $\Ker\cY_W^1:=\{u\in W'\boxtimes W \mid \cY^1_W(w,x)u=0 \, \forall w\in W\}$ 
is a $V$-submodule of $W'\boxtimes W$. 
\end{lem}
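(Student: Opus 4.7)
The plan is to verify the $V$-module closure condition directly by invoking the Jacobi identity for the intertwining operator $\cY_W^1 \in I\binom{W}{W, W'\boxtimes W}$. Concretely, I want to show that for every $v \in V$, $w \in W$, and $u \in \Ker\cY_W^1$, the formal series $Y^{W'\boxtimes W}(v,x_1)u$ still lies in $(\Ker\cY_W^1)[[x_1,x_1^{-1}]]$, i.e., each mode $v_m u$ is annihilated by $\cY_W^1(w,x_2)$ for every $w \in W$.

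The key step is to take $\Res_{x_0}$ of the Jacobi identity for $\cY_W^1$, which produces the commutator formula
\[
Y^W(v,x_1)\,\cY_W^1(w,x_2) \;-\; \cY_W^1(w,x_2)\,Y^{W'\boxtimes W}(v,x_1)
\;=\; \Res_{x_0}\, x_2^{-1}\delta\!\left(\frac{x_1-x_0}{x_2}\right) \cY_W^1\!\bigl(Y^W(v,x_0)w,\,x_2\bigr).
\]
Apply both sides to a vector $u \in \Ker\cY_W^1$. The first term on the left vanishes because $\cY_W^1(w,x_2)u = 0$ by hypothesis, and the right-hand side vanishes because $Y^W(v,x_0)w$ is a formal Laurent series in $W$, so each coefficient-wise contribution $\cY_W^1(v_i w, x_2)u$ is zero by the same hypothesis. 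This leaves $\cY_W^1(w,x_2)\,Y^{W'\boxtimes W}(v,x_1)u = 0$, and extracting the coefficient of $x_1^{-m-1}$ yields $\cY_W^1(w,x_2)\,v_m u = 0$ for all $w \in W$ and $m \in \bZ$, i.e.\ $v_m u \in \Ker\cY_W^1$.

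Hence $\Ker\cY_W^1$ is stable under every mode of every element of $V$, and is therefore a $V$-submodule of $W'\boxtimes W$; the $L(0)$-grading structure is inherited automatically since $L(n) = \omega_{n+1}$ with $\omega \in V$. There is no substantive obstacle here: the only thing one must be careful about is keeping track of which of the three $V$-actions (on $W$ as the target, on $W$ as the first argument of $\cY_W^1$, or on $W'\boxtimes W$ as the second argument) appears in each term of the commutator formula, so that the two vanishing observations are correctly invoked.
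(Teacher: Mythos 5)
Your proof is correct and is essentially the paper's own argument: the paper also applies the commutator formula obtained from the Jacobi identity for the intertwining operator, written mode-by-mode as $\cY^1_W(w,x)\, v_m p = v_m \cY^1_W(w,x)p - \sum_{j\in \bN}\binom{m}{j}\cY^1_W(v_jw,x)p\,x^{-m+j}$, with both right-hand terms vanishing for $p\in\Ker\cY^1_W$. Your version via $\Res_{x_0}$ of the Jacobi identity is the same computation in generating-function form, with the same two vanishing observations.
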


\begin{proof}
Since $\cY^1_W$ is an intertwining operator, 
for $p\in \Ker\cY^1_W, v\in V, m\in \bZ, w\in W$, we have 
\[ \cY^1_W(w,x) v_m p=v_m \cY^1_W(w,x)p - \sum_{j\in \bN}\binom{m}{j}\cY^1_W(v_jw,x)px^{-m+j}=0. \]
\end{proof}

The following is a relation between intertwining operators and rigidity. 

\begin{lem} \label{lem:rigidity-via-YW1}
$W$ is rigid if and only if $\Ker\cY_W^1$ does not contain $e_{W'}(V)$. 
\end{lem}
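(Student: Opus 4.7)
The plan is to deduce this directly by chaining together the two preceding lemmata with a functoriality observation. By Lemma \ref{lem:rigidity-with-piF}, rigidity of $W$ is equivalent to the statement that $\pi_F$ does not vanish on the submodule $W\boxtimes e_{W'}(V) \subseteq W\boxtimes(W'\boxtimes W)$, so it suffices to show that $\pi_F$ vanishes on $W\boxtimes e_{W'}(V)$ if and only if $e_{W'}(V) \subseteq \Ker\cY_W^1$.

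First I would describe $W\boxtimes e_{W'}(V)$ concretely as the image of the functorial map $j := \mathrm{id}_W \sbxt e_{W'} : W \boxtimes V \to W \boxtimes (W' \boxtimes W)$. Because $\cY^{\boxtimes}_{W,V}$ is surjective onto $W \boxtimes V$, and by naturality of the universal intertwining operator under $j$, the image of $j$ is spanned by the coefficients of the formal series $\cY^{\boxtimes}_{W, W' \boxtimes W}(w, x)\, e_{W'}(v)$ for $w \in W$ and $v \in V$. Hence $\pi_F$ vanishes on $W\boxtimes e_{W'}(V)$ if and only if $\pi_F \circ \cY^{\boxtimes}_{W, W' \boxtimes W}(w, x)\, e_{W'}(v) = 0$ for all such $w$ and $v$.

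Next I would invoke Lemma \ref{lem:YW1-as-fusion}, which gives the identity $\phi \circ \cY_W^1 = \mu|_F \circ \pi_F \circ \cY^{\boxtimes}_{W, W' \boxtimes W}$, where $\phi$ and $\mu|_F$ are isomorphisms. Composing with $\phi^{-1}\circ \mu|_F$ on the left is an isomorphism on coefficient spaces, so for any $u \in W' \boxtimes W$ we have $\pi_F \circ \cY^{\boxtimes}_{W, W' \boxtimes W}(w, x)\, u = 0$ for all $w \in W$ if and only if $\cY_W^1(w, x)\, u = 0$ for all $w \in W$, i.e.\ $u \in \Ker\cY_W^1$. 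Applying this with $u = e_{W'}(v)$ ranging over $e_{W'}(V)$ gives the equivalence: $\pi_F$ vanishes on $W \boxtimes e_{W'}(V)$ iff $e_{W'}(V) \subseteq \Ker \cY_W^1$. Combined with Lemma \ref{lem:rigidity-with-piF}, this yields the desired biconditional.

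There is no real obstacle here; the statement is a formal consequence of the three ingredients already assembled (the characterisation of rigidity in terms of $\pi_F$, the realisation of $\cY_W^1$ as $\pi_F$ composed with the universal intertwining operator, and functoriality of $\boxtimes$ applied to $e_{W'}$). The only small point requiring care is verifying that $W\boxtimes e_{W'}(V)$, as a submodule of $W \boxtimes (W' \boxtimes W)$, is genuinely spanned by coefficients of $\cY^{\boxtimes}_{W, W'\boxtimes W}(w,x)\, e_{W'}(v)$; this follows from the universal property of $P(1)$-tensor products together with the naturality identity $(\mathrm{id}_W \sbxt e_{W'}) \circ \cY^{\boxtimes}_{W,V} = \cY^{\boxtimes}_{W, W' \boxtimes W} \circ (\mathrm{id}_W \otimes e_{W'})$.
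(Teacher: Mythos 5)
Your proposal is correct and follows essentially the same route as the paper: both arguments reduce via Lemma \ref{lem:rigidity-with-piF} to whether $\pi_F$ kills $W\boxtimes e_{W'}(V)$, identify that submodule with the span of the coefficients of $\cY^{\boxtimes}_{W,W'\boxtimes W}(w,x)e_{W'}(v)$ by functoriality of the $P(1)$-tensor product, and then transfer the vanishing condition to $\cY_W^1$ through the identity of Lemma \ref{lem:YW1-as-fusion}, using that $\phi$ and $\mu|_F$ are isomorphisms. No gaps.
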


\begin{proof}
If $\Ker\pi_F$ contains $W\boxtimes e_{W'}(V)$, then $\Ker \pi_F \cY_{W,W'\boxtimes W}^{\boxtimes}$ contains $e_{W'}(V)$, because all of the coefficients vanish.  If $\Ker \pi_F \cY_{W,W'\boxtimes W}^{\boxtimes}$ contains $e_{W'}(V)$, then by surjectivity of the restriction $\cY_{W,e_{W'}(V)}^{\boxtimes}$ of $\cY_{W,W'\boxtimes W}^{\boxtimes}$ to $W \otimes e_{W'}(V)$, $\Ker\pi_F$ must contain $W\boxtimes e_{W'}(V)$.  Lemma \ref{lem:YW1-as-fusion} yields an equality between $\Ker \cY_W^1$ and $\Ker \pi_F \cY_{W,W'\boxtimes W}^{\boxtimes}$, so the result follows from the rigidity criterion of Lemma \ref{lem:rigidity-with-piF}.
\end{proof}

We now prove the following main result of this subsection.  Let $s_{TT}$ denote the S-matrix coefficient of $Z_T(v,\tau)$ in the expansion of $\tau^{\wt[v]} Z_T(v,-1/\tau)$ into a linear combination of trace functions on $T$-modules.  

\begin{prop} \label{prop:rigidity}
If $V$ has non-negative $L(0)$-spectrum and $s_{TT}$ is nonzero, then every simple $V$-module is rigid.  In particular, if $T$ is regular and has non-negative $L(0)$-spectrum, then every simple $V$-module is rigid.
\end{prop}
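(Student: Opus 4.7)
The second assertion follows immediately from the first via Lemma \ref{lem:stt-nonzero}, which provides $s_{TT} \neq 0$ from regularity and non-negative grading of $T$. So I would focus on the main claim: under the hypotheses, every simple $V$-module $W$ is rigid.

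By Lemma \ref{lem:rigidity-via-YW1}, it suffices to show that $\cY_W^1$ does not annihilate $e_{W'}(V)$. I would proceed by contradiction, assuming $\cY_W^1(w,x) e_{W'}(\unit) = 0$ for all $w \in W$; since $V$ is a simple $V$-module, this extends to the vanishing of $\cY_W^1$ on all of $e_{W'}(V)$. Mimicking the reduction at the start of the proof of Proposition \ref{prop:tj-satisfies-npt}, I can further reduce to the case of integer-weight $W$ by tensoring $T$ and $W$ with a suitable positive-definite even lattice VOA and a module, since all hypotheses and the rigidity conclusion are preserved.

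The central tool is the Moore–Seiberg–Huang machinery developed in \S5.3. Because $V$ is projective by Theorem \ref{thm:OPR2}, we have a canonical direct-sum decomposition $W' \boxtimes W = e_{W'}(V) \oplus \Ker \pi_{W'}$, and the non-rigidity hypothesis forces the $e_{W'}(V)$-summand to contribute zero to
\[ Z_W(w,w',x-y,\tau) = \Tr_W \cY_W^1(\cU(q_x)w, q_x)\,\cY^{\boxtimes}_{W',W}(\cU(q_y)w', q_y)\, q^{L(0)-c/24}. \]
I would then feed each $T^{(j)}$ into Lemma \ref{lem:chain-of-equalities}: Theorem \ref{thm:simple-current} supplies SW with respect to $W$, and Proposition \ref{prop:tj-satisfies-npt} supplies NPT. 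By Lemma \ref{lem:get-traces} the pseudo-trace contributions cancel, so summing over $j$ yields an equality between a sum of genus-one trace functions on $W \boxtimes U^j$ (LHS) and a sum involving $\pi_W \cY^{\boxtimes}_{W,W'}$ (RHS), related via the $S$-matrix.

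The crux is extracting from this equality a nonzero piece whose vanishing would follow from the non-rigidity hypothesis. Roughly, the $S$-transform produces a contribution proportional to $s_{TT}\,Z_T(w',w,y-x,\tau)$, which by linear independence of trace functions on distinct simple modules (Lemma \ref{lem:linear-independence}) cannot be cancelled by other summands; on the other side, the non-rigidity assumption combined with the decomposition along $e_{W'}(V) \oplus \Ker \pi_{W'}$ forces the matching piece to vanish. Choosing $w' = \unit$ (and suitable $w$) should make this matching transparent, since $\cY_{W'}(w', y)$ then becomes a pure identity insertion feeding into the forbidden $e_{W'}(V)$-component. The resulting contradiction finishes the proof. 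The main obstacle I anticipate is precisely this matching: one has to simultaneously track the index $j$ labeling $T^{(j)}$, the splitting of $\cY^{\boxtimes}_{W',W}$, and the $\cU$-corrections in the conformal-weight shift, verifying that $s_{TT} \neq 0$ genuinely obstructs the trivial solution rather than being absorbed into the pseudo-trace cancellation of Lemma \ref{lem:get-traces}.
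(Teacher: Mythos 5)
Your strategy is the paper's at the architectural level: reduce to integral weights by the lattice trick, run the Moore--Seiberg--Huang machinery on the $T^{(j)}$ using NPT (Proposition \ref{prop:tj-satisfies-npt}) and the simple-current property, use $s_{TT}\neq 0$ to guarantee a nonzero $Z_V$-coefficient, and close via the criterion of Lemma \ref{lem:rigidity-via-YW1}; the second claim via Lemma \ref{lem:stt-nonzero} is also right. But the step you yourself flag as the anticipated obstacle --- the ``matching'' --- is exactly where the proof lives, and your sketch of it would not close as written.

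Two concrete problems. First, the vanishing you propose to contradict is not there to be contradicted: $Z_W(w,w',x-y,\tau)$ is a trace of honest $V$-module operators (namely $\pi_W$ applied to coefficients of $\cY^{\boxtimes}_{W,W'}$) acting on $W$, and it is nonzero whether or not $W$ is rigid; the hypothesis that $\cY^1_W$ kills $e_{W'}(V)$ merely says the trace is computed through the complementary summand $\Ker\pi_{W'}$, which costs nothing. (Also, ``choose $w'=\unit$'' does not typecheck: $w'$ ranges over $W'$, not over $V$; that substitution is only available in Lemma \ref{lem:get-traces}, where one takes $W=V$.) Second, the actual mechanism lives on the \emph{other} side of the comparison. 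One fixes a single $j$ with $\lambda_V\neq 0$ in the $S$-expansion of $Z_{T^{(j)}}$ (a single $j$ is needed so that translation by $1$ contributes one scalar $\xi$ rather than $j$-dependent phases that spoil a clean comparison), equates $\xi\sum_U\lambda_U Z_U$ with $\sum_U \Tr_{W\boxtimes U}\cY^2_U(\cdots)$, restricts the insertion to $e_W(V)$, decomposes each $\Tr_{W\boxtimes U}$ over composition factors, and uses linear independence (Lemma \ref{lem:linear-independence}) together with $\lambda_V\neq 0$ to locate some $U$ whose $W\boxtimes U$ has a composition factor isomorphic to $V$ with nonzero coefficient. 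Projectivity and self-duality of $V$ (Theorem \ref{thm:OPR2}) then promote this factor to a direct summand, forcing $U\cong W'$ and $P=e_W(V)\subseteq W\boxtimes W'$; unwinding $\cY^2_{W'}$ via Lemma \ref{lem:translate-by-tau} exhibits the nonvanishing term as $\Tr_P\,\cY^{\boxtimes}_{W,W'}(\cdots)\cY^1_{W'}(\cdots)$, which certifies that $\cY^1_{W'}$ does not annihilate $e_W(V)$. It is therefore $W'$ that comes out rigid, and one finishes by the involutivity of $W\mapsto W'$. None of these steps --- the composition-factor decomposition, the use of projectivity to upgrade a factor to a summand, the identification $U\cong W'$, and the passage to the contragredient --- appear in your outline, and without them the equality of trace functions never makes contact with $\cY^1$.
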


\begin{proof}
Let $W$ be a simple $V$-module.  We may assume that $W$ has integral weights, by following the same method as in the proof of Proposition \ref{prop:tj-satisfies-npt}.  That is, by taking tensor products with a suitable lattice VOA $V_L$ and a rigid simple module $V_{L+\alpha}$, we obtain a simple $V \otimes V_L$-module $W \otimes V_{L + \alpha}$ with integral weights, and $W$ is rigid as a $V$-module if and only if $W \otimes V_{L + \alpha}$ is rigid as a $V \otimes V_L$-module.

By Proposition \ref{prop:tj-satisfies-npt}, every $T^{(i)}$ satisfies condition NPT and so there is $\lambda_U\in \bC$ for each simple $V$-module $U$ such that 
\[(\tau)^{-N}Z_{T^{(i)}}(w,w',\frac{x-y}{\tau},-1/\tau)=\sum_{U\in {\rm irr}(V)} \lambda_U Z_U(w,w',x-y,\tau), \]
where ${\rm irr}(V)$ denotes the set of simple $V$-modules and $N=\wt[w]+\wt[w']$. 

By our hypothesis that $s_{TT} \neq 0$, the $S$-transformation of
\[ Z_T(w,w',x-y,\tau)=\sum_{j=0}^{n-1} Z_{T^{(j)}}(w,w',x-y,\tau) \] 
is a linear combination of trace functions on $T$-modules and $Z_T(w,w',x-y,\tau)$ appears with a nonzero 
coefficient. In particular, by decomposing $T$-modules as $V$-modules, $Z_V(w,w',x-y,\tau)$ appears with a nonzero coefficient.  Therefore, there is some $T^{(j)}$ such that  $Z_V(w,w',x-y,\tau)$ appears with nonzero coefficient in the 
$S$-transformation $Z_{T^{(j)}}(w,w',\frac{x-y}{\tau},-1/\tau)$. We fix $j$ and write 
\[(\tau)^{-N}Z_{T^{(j)}}(w,w',\frac{x-y}{\tau},-1/\tau)=\sum_{U\in {\rm irr}(V)} \lambda_U Z_U(w,w',x-y,\tau) \]
where $\lambda_V \neq 0$. Since $T^{(j)}$ is a simple current, Lemma \ref{lem:translate-by-one} implies there is some $\xi\in \bC \setminus \{0\}$ such that 
\[Z_{T^{(j)}}(w,w',x-y+1,\tau)=\xi Z_{T^{(j)}}(w,w',x-y,\tau)\]

We consider the $S$-transformation of $Z_{T^{(j)}}(w,w',x-y,\tau)$ in two different ways.  First, we translate by $1$, apply $S$, apply skew-symmetry and translate by $\tau$:
\[ \begin{aligned}
(\tau)^{-N}\xi &Z_{T^{(j)}}(w,w',\frac{x}{\tau}-\frac{y}{\tau},-1/\tau) \\
&= (\tau)^{-N}Z_{T^{(j)}}(w,w',\frac{x}{\tau}-\frac{y}{\tau}+1,-1/\tau) \\
&= (\tau)^{-N}Z_{T^{(j)}}(w,w',\frac{x-y+\tau}{\tau},-1/\tau) \\
&= \sum_U \lambda_U Z_U(w,w',x-y+\tau, \tau) \\
&= \sum_U \lambda_U Z_U(w',w,y-x-\tau, \tau) \\
&= \sum_U \Tr_{W\boxtimes U}\cY^2_U(\cU(q_y)\cY_{W,W'}^{\boxtimes}(w,x-y)w',q_y)q^{L(0)-c/24}. 
\end{aligned} \]
Then we just apply $S$:
\[ \begin{aligned}
(\tau)^{-N}\xi &Z_{T^{(j)}}(w,w',\frac{x}{\tau}-\frac{y}{\tau},-1/\tau) \\
&= \xi\sum_U \lambda_U Z_U(w,w',x-y,\tau) \\
&= \xi\sum_U \lambda_U \Tr_U Y(\cU(q_y)\cY_W (w,x-y)w',q_y)q^{L(0)-c/24}
\end{aligned} \]
Therefore, we have 
\[\begin{aligned}
\xi \sum_{U\in {\rm irr}(V)} & \lambda_U \Tr_U Y(\cU(q_y)\cY_W(w,x-y)w',q_y)q^{L(0)-c/24} \\
&=\sum_{U\in {\rm irr}(V)} 
\Tr_{W\boxtimes U}\cY^2_U(\cU(q_y)\cY_{W,W'}^{\boxtimes}(w,x-y)w',q_y)q^{L(0)-c/24}.
\end{aligned}\]
By comparing coefficients of the expansions in $x-y$ and $\log(x-y)$, we see that for any $r\in W\boxtimes W'$, we have 
\[ \xi \sum_{U\in {\rm irr}(V)} \lambda_U \Tr_U Y(\pi_W(r),z)q^{L(0)-c/24}=\sum_{U\in {\rm irr}(V)} \Tr_{W\boxtimes U}\cY^2(r,z)q^{L(0)-c/24}. \]
In particular, we may restrict to $r\in e_W(V)$ to find that
\[ \begin{aligned}
\xi \sum_{U\in {\rm irr}(V)} &\lambda_U \Tr_U Y(\pi_W(r),z)q^{L(0)-c/24} \\
&= \sum_{U\in {\rm irr}(V)} \Tr_{W\boxtimes U}\cY^2(e_W\pi_W(r),z)q^{L(0)-c/24}
\end{aligned} \]
and so
\begin{equation} \label{eq:Moore-Seiberg-comparison}
\begin{aligned}
\xi \sum_{U\in {\rm irr}(V)} &\lambda_U \Tr_U Y(\cU(q_y)\cY_W(w,x-y)w',q_y)q^{L(0)-c/24} \\
&= \sum_{U\in {\rm irr}(V)} \Tr_{W \boxtimes U}\cY^2_U(e_W \pi_W \cU(q_y) \cY_{W,W'}^{\boxtimes}(w,x-y)w',q_y)q^{L(0)-c/24}
\end{aligned}
\end{equation}

The composite of $\cY^2_U$ with $e_W$ applied to the first input is an intertwining operator of the form $I\binom{W\boxtimes U}{V, W\boxtimes U}$.
Since the trace only depends on the semisimplification of a module, we can decompose our trace into a sum of traces over simple composition factors of $W \boxtimes U$.  That is, we replace this composite with a linear combination $\sum a_P Y^P$ of structure maps $Y^P$ for the action of $V$ on the simple composition factors $P$ of $W\boxtimes U$, yielding
\[ \begin{aligned}
\xi \sum_{U\in {\rm irr}(V)} &\lambda_U \Tr_U Y(\cU(q_y)\cY_W(w,x-y)w' q^{L(0)-c/24} \\
&= \sum_{U\in {\rm irr}(V)} \sum_{P \in \text{Comp}(W \boxtimes U)} a_P Z_P(w,w',x-y,\tau).
\end{aligned} \]

By our choice of $T^{(j)}$, we have $\lambda_V \neq 0$.  By Lemma \ref{lem:linear-independence}, the traces $Z_P (w,w',x-y,\tau)$ are linearly independent, so there is some $U$ such that there is a composition factor $P \cong V$ with nonzero coefficient $a_P$.  Since $W \boxtimes U$ then has a composition factor isomorphic to $V$, the projectivity of $V$ (from Theorem \ref{thm:OPR2}) implies $V$ is a submodule, and self-duality of $V$ then implies $V$ is a direct summand.  Thus, there is a nonzero intertwining operator of type $\binom{V}{W, U}$.  By the duality properties of intertwining operators, the irreducible module $U$ is then necessarily isomorphic to $W'$, so $P = e_W(V) \subseteq W \boxtimes W'$.  Thus, we have the following nonvanishing summand on the right side of Equation \eqref{eq:Moore-Seiberg-comparison}:
\[ \Tr_P\cY^2_{W'}\left(\cU(q_y)\pi_W\cY_{W,W'}^{\boxtimes}(w,x-y)w',q_y\right)q^{L(0)-c/24} \neq 0. \]
By the properties of $\cY^2$ given in the proof of Lemma \ref{lem:translate-by-tau}, the left side is equal to 
\[\Tr_P \cY_{W,W'}^{\boxtimes}(\cU(q_x)w,q_x)\cY_{W'}^1(\cU(q_y)w',q_y)q^{L(0)-c/24}\]
and so $\cY^1_{W'}(w',x)P \neq 0$.  This implies $\Ker \cY^1_{W'}$ does not contain $e_W(V)$, so by the criterion of Lemma \ref{lem:rigidity-via-YW1}, $W'$ is rigid.  Passage to contragradients is an involutive operation on isomorphism classes of irreducible $V$-modules, so this resolves the first claim.

The second claim then follows from Lemma \ref{lem:stt-nonzero}, i.e., if $T$ is regular and has non-negative $L(0)$-spectrum, then $s_{TT} \neq 0$.  This is where the Verlinde conjecture is used (i.e., Theorem 5.5 of \cite{HV}, but with slightly weaker hypotheses).
\end{proof}

\subsection{Main Theorem}

\begin{thm} \label{thm:main}
Let $T$ be a simple regular VOA with non-negative $L(0)$-spectrum and a nonsingular invariant form.  Let $\sigma$ be a finite order automorphism of $T$.  Then $V$ is regular.
\end{thm}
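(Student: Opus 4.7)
The plan is to apply Proposition \ref{prop:regularity-from-projectivity}, which reduces regularity of $V = T^\sigma$ to three conditions: non-negative grading, $C_2^0$-cofiniteness, and projectivity of every simple $V$-module in the category of finitely generated logarithmic $V$-modules. Thus the proof is a matter of assembling results already proved earlier in the paper.

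The first two hypotheses are immediate from work already done. Non-negative grading of $V$ follows from the inclusion $V \subseteq T$, since $V$ inherits its $L(0)$-grading from $T$. For $C_2^0$-cofiniteness, the extension of Li's argument given earlier in Section \ref{sec:literature} shows that any regular VOA (in particular $T$) is $C_n^0$-cofinite for every $n \geq 2$, and the cyclic orbifold theorem of \cite{M3} then transfers $C_2^0$-cofiniteness from $T$ to the fixed-point subalgebra $V$.

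The remaining task is to show that every simple $V$-module $W$ is projective. Two inputs do the work. Theorem \ref{thm:OPR2} gives that $V$ itself is projective as a $V$-module, and Proposition \ref{prop:rigidity} (which applies because $s_{TT} \neq 0$ by Lemma \ref{lem:stt-nonzero}) tells us that $W$ is rigid with dual $W'$. The passage from rigidity to projectivity is then the standard monoidal-category argument: rigidity supplies a natural isomorphism
\[ \Hom_V(W, -) \cong \Hom_V(V, W' \boxtimes -), \]
and since $V$ is projective while $W' \boxtimes -$ is an exact endofunctor on the category of finitely generated logarithmic $V$-modules (it has $W \boxtimes -$ as both a left and right adjoint, by rigidity), the composite functor $\Hom_V(W, -)$ is exact. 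Hence $W$ is projective in this category, and Proposition \ref{prop:regularity-from-projectivity} yields regularity of $V$.

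The main obstacle is simply the bookkeeping in the rigidity-to-projectivity step: one must confirm that the adjunctions and exactness claims take place inside the category of finitely generated logarithmic $V$-modules used in Proposition \ref{prop:regularity-from-projectivity}. That this category is closed under $\boxtimes$, carries a braided monoidal structure, and has the exactness properties needed for the argument is precisely the content of Lemma \ref{lem:tensor-products-work-well}, so once that piece of glue is verified, the proof is a short combination of Proposition \ref{prop:regularity-from-projectivity}, Theorem \ref{thm:OPR2}, Proposition \ref{prop:rigidity}, and the orbifold $C_2^0$-cofiniteness result of \cite{M3}.
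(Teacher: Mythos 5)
Your proposal is correct in its overall architecture and assembles exactly the same ingredients as the paper: Proposition \ref{prop:regularity-from-projectivity} to reduce regularity to projectivity of simple modules, the orbifold $C_2^0$-cofiniteness result of \cite{M3}, Theorem \ref{thm:OPR2} for projectivity of $V$, and Proposition \ref{prop:rigidity} (via Lemma \ref{lem:stt-nonzero}) for rigidity. Where you diverge is in the final step. The paper does not invoke the adjunction $\Hom_V(W,-)\cong\Hom_V(V,W'\boxtimes -)$; instead, given a surjection $f:D\to W$, it pulls back along $\pi_W\sbxt 1$, uses projectivity of $V$ to find a minimal submodule $Q\subseteq W'\boxtimes D$ with $Q\cong V$ mapping onto $e_{W'}(V)$, and then uses the rigidity composite directly to exhibit a splitting of $f$. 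The advantage of the paper's explicit construction is that it only uses rigidity in the weak form actually proved in Proposition \ref{prop:rigidity} — namely that the single composite $W\boxtimes V\to V\boxtimes W$ is an isomorphism — together with projectivity of $V$. Your abstract route needs slightly more: to get the natural adjunction and the two-sided exactness of $W'\boxtimes -$, you must first upgrade the paper's one-sided rigidity to full duality data (both triangle identities). This upgrade is routine for simple objects with $\End\cong\bC$ — the standard identity $(f\sbxt 1)\circ\mathrm{coev}=(1\sbxt g)\circ\mathrm{coev}$ forces the two snake scalars to agree, so a single rescaling fixes both — and the ambient category is braided (Lemma \ref{lem:tensor-products-work-well}) so left and right duals coincide; but you should say this explicitly, since as written you pass from the paper's definition of ``rigid'' to the categorical adjunction without comment. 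With that one sentence added, your argument is a clean and arguably more conceptual substitute for the paper's diagram chase.
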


\begin{proof}
By Proposition \ref{prop:regularity-from-projectivity}, it suffices to show that any simple $V$-module $W$ is projective in the category of $V$-modules.

Consider a $V$-module surjection $f: D \to W$.  We wish to show that it splits.
By taking its pullback along
\[ \pi_W\sbxt 1_{W\boxtimes U}:(W\boxtimes W')\boxtimes W\to V\boxtimes W, \]
we have the following commutative diagram: 
\[ \xymatrix{ & W\boxtimes (W'\boxtimes D) \ar[r]^{\mu_D} \ar[d]_{1_W \sbxt (1_{W'} \sbxt f)}
& (W\boxtimes W')\boxtimes D \ar[r]^-{\pi_W\sbxt 1_{D}} \ar[d]_{1_{W\boxtimes W'}\sbxt f }
& V \boxtimes D \ar[d]_{1_V \sbxt f} \\
W \boxtimes V \ar[r]_-{1_W \sbxt e_W} & W\boxtimes (W'\boxtimes W) \ar[r]_{\mu}
& (W\boxtimes W') \boxtimes W \ar[r]_-{\pi_W \sbxt 1_W} & V\boxtimes W } \]
where $\mu_D$ is an isomorphism making the left square commute (and this exists by functoriality and associativity of fusion).  We will show that a splitting of $f$ exists by finding a submodule of $W \boxtimes (W' \boxtimes D)$ that is isomorphic to $W$, such that the diagram produces an isomorphism to $V \boxtimes W$ that factors through $V \boxtimes D$.

Consider the set $\fX$ of submodules $Q \subseteq W'\boxtimes D$ such that $1_{W}\sbxt (1_{W'} \sbxt f)(W \boxtimes Q) = W \boxtimes e_W(V)$.  This set is nonempty, since we can just take the pullback.  For any such submodule $Q \in \fX$, we have a surjection $1_{W'} \sbxt f: Q \to e_W(V)$, and since $V$ is projective by Theorem \ref{thm:OPR2}, there is a section $V \to Q$.  In particular, if we take $Q$ to be a minimal element of $\fX$, then $Q$ is isomorphic to $V$.

Under this choice of $Q$, the rigidity asserted in Proposition \ref{prop:rigidity} gives us an isomorphism
\[ (\pi_W\sbxt 1_W) \circ \mu\circ (1_W\sbxt(1_{W'}\sbxt f)): W\boxtimes Q \to V \boxtimes W. \]
By commutativity of the diagram, $(\pi_W\sbxt 1_D)\circ \mu_D(W\boxtimes Q)$ is a submodule of $V \boxtimes D$ that is isomorphic to $W$, and maps isomorphically to $V \boxtimes W$ along $1_V \sbxt f$.  Thus, $f:D\to W$ splits. 
\end{proof}

The cyclic orbifold problem for a property $P$ naturally extends to the solvable orbifold problem for $P$:

\begin{cor} \label{cor:solvable-fixed-points}
Let $T$ be a VOA that is simple, regular, with non-negative $L(0)$-spectrum, and self-dual as a $T$-module.  Let $G$ be a finite solvable group of automorphisms of $T$.  Then the fixed-point subVOA $T^G$ is simple, regular, with non-negative $L(0)$-spectrum, and self-dual as a $T^G$-module.
\end{cor}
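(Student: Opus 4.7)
The plan is to reduce the solvable case to iterated cyclic orbifolds.  Since $G$ is finite and solvable, I can fix a subnormal chain
\[ \{e\} = G_k \triangleleft G_{k-1} \triangleleft \cdots \triangleleft G_0 = G \]
with each successive quotient $G_i/G_{i+1}$ cyclic of prime order.  Passing to fixed points gives the tower
\[ T = T^{G_k} \supseteq T^{G_{k-1}} \supseteq \cdots \supseteq T^{G_0} = T^G, \]
and because $G_{i+1}$ is normal in $G_i$, the quotient $G_i/G_{i+1}$ acts by VOA automorphisms on $T^{G_{i+1}}$ with fixed-point subVOA $(T^{G_{i+1}})^{G_i/G_{i+1}} = T^{G_i}$.

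I will prove by downward induction on $i$ that each $T^{G_i}$ is simple, regular, non-negatively graded, and self-dual (i.e., carries a nonsingular invariant bilinear form).  The base case $i=k$ is the hypothesis on $T$.  For the inductive step, assume $T^{G_{i+1}}$ has the four properties and pick a generator $\sigma$ of the finite cyclic group $G_i/G_{i+1}$.  Then $T^{G_{i+1}}$ together with $\sigma$ satisfies the hypotheses of Theorem \ref{thm:main}, which yields regularity of $T^{G_i} = (T^{G_{i+1}})^\sigma$.  Simplicity of $T^{G_i}$ follows from Theorem 3 of \cite{DM} applied to the cyclic action on $T^{G_{i+1}}$, and non-negative grading is inherited through the inclusion $T^{G_i} \subseteq T^{G_{i+1}}$.

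The remaining ingredient is to transfer the nonsingular invariant bilinear form from $T^{G_{i+1}}$ down to $T^{G_i}$.  As the authors note in the introduction, this is the ``quite trivial'' half of the cyclic orbifold problem: for a simple self-dual VOA the space of invariant bilinear forms is one dimensional, forcing $\sigma$ to preserve the form up to a nonzero scalar which is pinned to $1$ by evaluation on a $\sigma$-fixed vector where the form is nonzero; the resulting $\sigma$-invariance of the form and the orthogonal eigenspace decomposition $T^{G_{i+1}} = \bigoplus_j (T^{G_{i+1}})^{(j)}$, in which $(T^{G_{i+1}})^{(j)}$ is paired only with $(T^{G_{i+1}})^{(-j)}$, force the restriction to the trivial eigenspace $T^{G_i}$ to remain nondegenerate.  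This closes the induction and, evaluated at $i=0$, gives the corollary.

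The main obstacle is essentially organizational: one has to verify that all four hypotheses of Theorem \ref{thm:main} remain in force at every stage of the induction.  The Main Theorem supplies the only genuinely deep content (preservation of regularity under a single cyclic orbifold); the other three properties propagate on formal grounds.  No further analytic or representation-theoretic machinery beyond Theorem \ref{thm:main}, the cyclic simplicity result of \cite{DM}, and the uniqueness of the invariant form on a simple self-dual VOA is required.
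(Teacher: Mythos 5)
Your proof is correct and takes essentially the same route as the paper, whose entire argument is: reduce to the case of a cyclic group of prime order by induction along a composition series, then apply Theorem \ref{thm:main}. The extra verifications you supply (simplicity via Theorem 3 of \cite{DM}, inheritance of the non-negative grading, and descent of the nonsingular invariant form to the fixed eigenspace) are exactly the ``quite trivial'' parts of the cyclic orbifold problem that the introduction already disposes of, so nothing is missing.
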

\begin{proof}
We may reduce to the case that $G$ is cyclic of prime order by induction along the composition series of $G$.  We may then apply Theorem \ref{thm:main}.
\end{proof}

\begin{cor} \label{cor:g-rational}
Let $T$ be a simple regular VOA with non-negative $L(0)$-spectrum and a non-singular invariant form.  Then $T$ is $\sigma$-rational for any finite-order automorphism $\sigma$.  That is, all admissible $\sigma$-twisted $T$-modules are completely reducible.  Furthermore, $T$ is $\sigma$-regular, in the sense that all weak $\sigma$-twisted $T$-modules are direct sums of ordinary irreducible $\sigma$-twisted $T$-modules.
\end{cor}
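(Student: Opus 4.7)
The plan is to descend to $V$-modules via Theorem~\ref{thm:main}, extract semisimplicity there, and then reassemble using the observation that $T\boxtimes_V W$ is a semisimple twisted $T$-module for every irreducible $V$-module $W$. Let $M$ be a weak $\sigma$-twisted $T$-module. Restricting the action along $V\hookrightarrow T$, we regard $M$ as a weak $V$-module; by Theorem~\ref{thm:main}, $V$ is regular, so $M$ decomposes as a direct sum $M=\bigoplus_\alpha W_\alpha$ of ordinary irreducible $V$-modules. In particular $L(0)$ acts semisimply on $M$ with finite-multiplicity, bounded-below spectrum in each coset of $\bZ$, which already supplies the ``ordinary'' part of the desired conclusion.

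For each irreducible $V$-submodule $W\subseteq M$, the twisted action of $T$ restricted to $T\otimes W$ is a $V$-module intertwining operator and therefore factors through the induced module $T\boxtimes_V W$ by the universal property, producing a map $T\boxtimes_V W\to M$. Compatibility with the $T$-action follows from the associativity clause of Proposition~\ref{prop:induced-module-associativity} for the intertwining operator defining the induced structure on $T\boxtimes_V W$. By Corollary~\ref{cor:irreducible-induced-modules}, $T\boxtimes_V W$ is a finite direct sum of copies of an irreducible $\sigma^k$-twisted $T$-module, so its image $T\cdot W\subseteq M$ is a finite direct sum of irreducible twisted $T$-submodules of $M$ (and monodromy forces $k=1$ since $M$ is $\sigma$-twisted). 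For any $m\in M$, the $V$-submodule $V\cdot m$ meets only finitely many summands $W_{\alpha_1},\ldots,W_{\alpha_k}$ of the $V$-decomposition of $M$, so the $T$-submodule $T\cdot m$ is contained in $\sum_{i=1}^k T\cdot W_{\alpha_i}$ and is likewise a direct sum of finitely many irreducible $\sigma$-twisted $T$-submodules.

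For each isomorphism class of irreducible $\sigma$-twisted $T$-module $X$, let $M_X$ denote the sum of all $T$-submodules of $M$ isomorphic to $X$. Then $M_X$ is a sum of simple $T$-modules, hence semisimple, hence a direct sum of copies of $X$. For $X\not\cong X'$, the intersection $M_X\cap M_{X'}$ is a $T$-submodule of both, so it decomposes both as a direct sum of copies of $X$ and of copies of $X'$, which forces it to vanish. Combined with the observation from the previous paragraph that every $m\in M$ lies in $\sum_X M_X$, we get $M=\bigoplus_X M_X$, a direct sum of ordinary irreducible $\sigma$-twisted $T$-modules. This establishes $\sigma$-regularity, and $\sigma$-rationality follows at once because admissible $\sigma$-twisted $T$-modules are in particular weak $\sigma$-twisted $T$-modules.

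The main obstacle is verifying that the universal map $T\boxtimes_V W\to M$ is genuinely a homomorphism of $\sigma$-twisted $T$-modules (not merely of $V$-modules) and that $T\boxtimes_V W$ is semisimple as a twisted $T$-module for every irreducible $V$-module $W$. Both points are furnished by machinery already built in the paper, namely Proposition~\ref{prop:induced-module-associativity} and Corollary~\ref{cor:irreducible-induced-modules}, which rest on the simple current property of the $T^{(j)}$ proved in Theorem~\ref{thm:simple-current}. A secondary subtlety is to confirm that monodromy considerations restrict the $X$ appearing in the decomposition to genuinely $\sigma$-twisted irreducible $T$-modules, but this is automatic because $M$ itself carries $\sigma$-monodromy and its $T$-submodules inherit it.
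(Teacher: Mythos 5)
Your argument is correct in outline, but it follows a genuinely different route from the paper's. The paper's proof is purely algebraic and very short: regularity of $T^\sigma$ gives semisimplicity of the Zhu algebras $A_k(T^\sigma)$ (Theorem 4.10 of \cite{DLM2}), the twisted Zhu algebras $A_{\sigma,n}(T)$ are quotients of $A_{\lfloor n\rfloor}(T^\sigma)$ (proof of Proposition 2.5 of \cite{DLM1}), and semisimplicity of all $A_{\sigma,n}(T)$ is equivalent to $\sigma$-rationality (Theorem 4.5 of \cite{DLM1}); $\sigma$-regularity then follows because weak twisted modules are admissible and irreducible admissibles are ordinary. What that buys is brevity, at the cost of importing (and in one case transplanting the proof of) external results on twisted associative algebras. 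Your route instead stays entirely inside the induced-module technology of Section 4 (Proposition \ref{prop:induced-module-associativity}, Theorem \ref{thm:simple-current}, Corollary \ref{cor:irreducible-induced-modules}) and yields the sharper structural statement that every irreducible $V$-constituent of $M$ generates a semisimple twisted $T$-submodule which is a quotient of $T\boxtimes_V W$. The price is two technical points you pass over quickly: (a) the factorization of the twisted action $T^{(j)}\otimes W\to M\{z\}$ through $T^{(j)}\boxtimes_V W$ is not a formal application of the universal property, since $M$ is an infinite direct sum and not an object of the category where $\boxtimes$ is defined; one must argue (using that $T^{(j)}\boxtimes_V W$ is irreducible, so $I\binom{W_\alpha}{T^{(j)},W}$ is at most one-dimensional and nonzero for a single isomorphism type, together with lower truncation) that only finitely many components $W_\alpha$ are hit; and (b) the claim that ``monodromy forces $k=1$'' presupposes that $T^{(1)}$ acts nontrivially on $W$ inside $M$, which needs a word (simplicity of $T$ and the nonvanishing of vertex operators on simple modules, or alternatively the observation that $T\cdot W$ is in any case a quotient of a semisimple twisted module so the conclusion survives). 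With those points tightened, your proof is a valid and more self-contained alternative to the one in the paper.
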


The claim that Theorem \ref{thm:main} implies this corollary is proved (under slightly different hypotheses) as Lemma 4.2 in \cite{ADJR}.  Our argument follows essentially the same lines, and we suspect it was known to Dong, Li, and Mason in the 1990s.

\begin{proof}
By Theorem \ref{thm:main}, $T^\sigma$ is regular, hence rational, so by Theorem 4.10(a) of \cite{DLM2}, the associative algebra $A_k(T^\sigma)$ is finite dimensional and semisimple for all integers $k$.  As mentioned in the proof of Proposition 2.5 of \cite{DLM1}, the associative algebra $A_{\sigma,n}(T)$ is a quotient of $A_{\lfloor n \rfloor}(T^\sigma)$ for all $n \in \frac{1}{|\sigma|}\bZ_{\geq 0}$.  Thus, each $A_{\sigma,n}(T)$ is also semisimple.  By Theorem 4.5 of \cite{DLM1}, $\sigma$-rationality is equivalent to semisimplicity of the associative algebras $A_{\sigma,n}(T)$ for all $n \in \frac{1}{|\sigma|}\bZ_{\geq 0}$ - a proof is not given there, but the proof of Theorem 4.11 of \cite{DLM2}, which concerns the untwisted case, translates without significant change.  The $\sigma$-regularity claim follows from the fact that weak $\sigma$-twisted $T$-modules are admissible (by essentially the same argument as Proposition \ref{prop:regularity-from-projectivity}) and the fact that irreducible admissible $\sigma$-twisted $T$-modules are ordinary.
\end{proof}

\section{Application: holomorphic orbifolds and Generalized Moonshine}

We briefly describe an application of Corollary \ref{cor:g-rational}.  In \cite{N}, Norton formulated a ``Generalized Moonshine'' conjecture based on extensive computations relating centralizers of elements in the monster simple group $\bM$ with genus zero modular functions.

\begin{conj}
There is a rule that assigns to each element $g \in \bM$ a graded projective representation $V(g) = \bigoplus_{n \in \bQ} V(g)_n$ of the centralizer $C_{\bM}(g)$ and to each commuting pair $(g,h)$ in $\bM$ a holomorphic function $Z(g,h;\tau)$ on the complex upper half-plane $\fH$, such that the following 5 properties hold:
\begin{enumerate}
\item There is a lift $\tilde{h}$ of $h$ to a finite order linear transformation on $V^\natural(g)$ such that $Z(g,h;\tau) = \Tr_{V^\natural(g)}\tilde{h} q^{L_0-1}$ (where $q = e^{2\pi i \tau}$).
\item The function $(g,h) \mapsto Z(g,h;\tau)$ is invariant under simultaneous conjugation on the pair $(g,h)$ in $\bM$, up to rescaling.
\item $Z(g,h;\tau)$ is either a constant or a Hauptmodul (meaning $Z(g,h;\tau)$ is invariant under a discrete group $\Gamma_{g,h} \subset SL_2(\bZ)$ containing some $\Gamma(N)$, such that $Z(g,h;\tau)$ generates the function field of the quotient $\Gamma_{g,h}\backslash \fH$).
\item For any $\left(\begin{smallmatrix} a & b \\ c & d \end{smallmatrix} \right) \in SL_2(\bZ)$ and any commuting pair $(g,h)$ in $\bM$, $Z(g,h,\frac{a\tau+b}{c\tau+d})$ is proportional to $Z(g^a h^c, g^b h^d,\tau)$.
\item $Z(g,h;\tau)$ is proportional to $J(\tau)$ if and only if $g=h=1$.
\end{enumerate}
\end{conj}

In \cite{DGH88}, this conjecture was quickly given a physical interpretation: $V(g)$ is the Hilbert space of the $g$-twisted sector of a conformal field theory with $\bM$-symmetry, and $Z(g,h;\tau)$ is the genus one partition function with boundary conditions twisted by the commuting pair $(g,h)$.  At a physical level of rigor, claims 1, 2, 4 and 5 then follow from orbifold conformal field theory considerations.  In fact, these considerations were introduced in \cite{V86}, where the compatibility $Z(g,h,\frac{a\tau+b}{c\tau+d}) = Z(g^a h^{-c}, g^{-b} h^d,\tau)$ was discussed and the possibility of phase anomalies yielding a proportionality constant was introduced.  The difference in signs amounts to a different convention for twisting, where some authors define a $g$-twisted module by the same formula that others use to define $g^{-1}$-twisted module.  In a geometric sense, Norton's convention amounts to defining elliptic curves by lattice quotients $\bC/\Lambda$ with distinguished basis given by $\Lambda = \langle -1, \tau \rangle$ instead of the usual choice $\Lambda = \langle 1, \tau \rangle$.

On the mathematical side, this interpretation amounts to setting $V(g)$ to be an irreducible $g$-twisted $V^\natural$-module, where $V^\natural$ is the vertex operator algebra with $\bM$ symmetry constructed in \cite{FLM}.  The main advance toward proving this interpretation of the conjecture came in \cite{DLM}, where it was shown that for any $g \in \bM$, there exists an irreducible $g$-twisted $V^\natural$-module, and such modules are unique up to isomorphism.  This gives a natural projective $C_{\bM}(g)$ action on the twisted modules, and after some work on convergence, claims 1, 2, and 5 were resolved by defining $Z$ to satisfy claim 1.  Furthermore, results in the same paper reduced claim 4 to the question of $g$-rationality of $V^\natural$.  More generally, they showed that similar properties hold for $g$-rational holomorphic $C_2$-cofinite vertex operator algebras.  We now resolve claim 4 unconditionally.

\begin{thm}
$V^\natural$ is $g$-rational for all $g \in \bM$.
\end{thm}
\begin{proof}
By Corollary \ref{cor:g-rational}, it suffices to show that $V^\natural$ is non-negatively graded, holomorphic, and $C_2$-cofinite.  The non-negativity of the grading is immediate from the construction in \cite{FLM}, the holomorphic property is proved in \cite{D94}, and the $C_2$-cofinite property is proved as Proposition 12.4 of \cite{DLM}
\end{proof}

\begin{thm} \label{thm:sl2z-rule}
Let $V$ be a holomorphic $C_2$-cofinite VOA with non-negative $L(0)$-spectrum.  Given a commuting pair $(g,h)$ of finite order automorphisms of $V$, let $Z(g,h;\tau) = \Tr_{V(g)}\tilde{h} q^{L_0-1}$, where $V(g)$ is an irreducible $g$-twisted $V$-module, and $\tilde{h}$ is some lift of $h$ to a linear transformation on $V(g)$.  Then for any $\left(\begin{smallmatrix} a & b \\ c & d \end{smallmatrix} \right) \in SL_2(\bZ)$, $Z(g,h,\frac{a\tau+b}{c\tau+d})$ is proportional to $Z(g^a h^c, g^b h^d,\tau)$.
\end{thm}
\begin{proof}
Theorem 10.1 of \cite{DLM} asserts that if $V$ is $C_2$-cofinite and $g$-rational, and if $M^1, \ldots, M^m$ represent a complete list of isomorphism classes of $h$-stable irreducible $g$-twisted $V$-modules, then the corresponding trace functions $T_1,\ldots,T_m$ span the space $\cC_1(g,h)$ of $(g,h)$-twisted genus 1 functions.  Under our hypotheses, $V$ is automatically $C_2$-cofinite, and it is $g$-rational by Corollary \ref{cor:g-rational}.  By Theorem 10.3 of \textit{loc. cit.}, there is a unique isomorphism class of irreducible $g$-twisted $V$-module, so we find that $\cC_1(g,h)$ is spanned by the corresponding trace function $T_{g,h}(v;\tau) = \Tr(o(v)\tilde{h}q^{L(0)-c/24}|V(g))$.  By Theorem 5.4 of \textit{loc. cit.}, there is an action of $SL_2(\bZ)$ on the set of genus 1 function spaces, such that $(c\tau+d)^{\wt v}T_{g,h}(v;\frac{a\tau+b}{c\tau+d}) \in \cC_1(g^a h^c,g^bh^d)$ for all $\left(\begin{smallmatrix} a & b \\ c & d \end{smallmatrix} \right) \in SL_2(\bZ)$.  The conclusion then follows by noting that $Z(g,h;\tau) = T_{g,h}(\unit;\tau)$.
\end{proof}

\begin{cor} \label{cor:generalized-moonshine}
Claim 4 in the Generalized Moonshine conjecture is true, when we set $V(g)$ to be an irreducible $g$-twisted $V^\natural$-module.
\end{cor}

\begin{rem}
Much more can be said about the proportionality constants that appear in Claims 2 and 4 in the Generalized Moonshine conjecture.  In particular, \cite{GPV13} conjectured that the constants that would appear in the corresponding claims for any holomorphic $C_2$-cofinite vertex operator algebra are controlled by a distinguished element in $H^3(\Aut(V),\bC^\times)$.  This is now firmly established in \cite{vEMS} for the case of a finite cyclic group of automorphisms.
\end{rem}

\end{document}